
\documentclass[10pt]{amsart}

\usepackage{amsmath}
\usepackage{amsfonts}
\usepackage{amssymb} 
\usepackage{amsthm}
\usepackage{url} 
\usepackage{bbm}
\usepackage{xcolor}
\usepackage{multicol}
\usepackage{pst-fractal}
\usepackage{comment}
\usepackage{hyperref}
\usepackage[backend=biber, style=alphabetic, maxbibnames=99]{biblatex}
\usepackage[T1]{fontenc}
\usepackage{newpxtext,newpxmath} 
\usepackage[utf8]{inputenc}
		    \usepackage{palatino}

\usepackage{tikz-cd}
\usepackage{verbatim}
\usepackage{physics}

\hypersetup{colorlinks=true,
linkcolor=blue, citecolor=blue, urlcolor=blue}
\usepackage{fullpage} 
\usepackage[margin=1in]{geometry}


\newtheorem{thm}{Theorem}[section]

\newtheorem{cor}[thm]{Corollary}
\newtheorem{lem}[thm]{Lemma}
\newtheorem{prop}[thm]{Proposition}

\theoremstyle{definition}
\newtheorem{defn}[thm]{Definition}

\newtheorem{rmk}[thm]{Remark}

\newtheorem{ex}[thm]{Example}


\DeclareMathOperator{\Span}{span}

\DeclareMathOperator{\MAX}{MAX}

\newcommand{\N}{\mathbb{N}}

\newcommand{\e}{\epsilon}

\renewcommand{\a}{\alpha}
\renewcommand{\b}{\beta}

\newcommand{\vp}{\varphi}

\renewcommand{\norm}[1]{\left\lVert #1 \right\rVert}

\newcommand{\bb}[1]{\mathbb{#1}}
 
\newcommand{\cc}[1]{\mathcal{#1}}

\addbibresource{references.bib}

\title{Operator Systems Generated by Projections}
\author{Roy Araiza}
\author{Travis Russell}

\address{Department of Mathematics \& Illinois Quantum Information Science and Technology Center, University of Illinois at Urbana-Champaign, Urbana, IL, 61801}
\email{raraiza@illinois.edu}

\address{Department of Mathematics, Dartmouth College, Hanover, NH}
\email{travis.b.russell@tcu.edu}

\begin{document}
\maketitle
\newcommand{\rmkra}[1]{%
{\textcolor{red}{[Roy: \small #1]}}
}

\newcommand{\rmktr}[1]{%
{\textcolor{red}{[Travis: \small #1]}}
}
\begin{abstract}
We construct a family of operator systems and $k$-AOU spaces generated by a finite number of projections satisfying a set of linear relations. This family is universal in the sense that the map sending the generating projections to any other set of projections which satisfy the same relations is completely positive. These operator systems are constructed as inductive limits of explicitly defined operator systems. By choosing the linear relations to be the nonsignalling relations from quantum correlation theory, we obtain a hierarchy of ordered vector spaces dual to the hierarchy of quantum correlation sets. By considering another set of relations, we also find a new necessary condition for the existence of SIC-POVM's.
\end{abstract}

\section{Introduction}

The interplay between operator algebras and quantum information theory has yielded many exciting results, especially over past fifteen years or so, including in the recent solution to Connes' embedding problem \cite{ji2020mip}. Within this field of study, operator systems and completely positive maps have played a vital role, providing crucial operator algebraic tools used to approach quantum information problems. For example, the various correlation sets considered in Tsirelson's problems \cite{Tsirelson1987, Tsirelson1993} have been reformulated in terms of states on finite dimensional operator systems which arise as subsystems of certain universal group C*-algebras in \cite{lupini2020perfect}.

The connection between operator systems and quantum information theory is very natural. Quantum measurements are generally formulated in terms of projection-valued measures (or positive operator-valued measures) and states on the C*-algebras which they generate. Therefore many problems in quantum information theory have a natural formulation in terms of projection-valued measures and states on the corresponding C*-algebra generated by those projections. Since many problems involve only discrete measurements, and thus only finitely many projections, it is reasonable to wonder if these problems can be formulated using only the langauge of finite-dimensional operator systems and their state spaces. One impediment to this is that operator systems, as abstractly characterized by Choi-Effros \cite{choi1977injectivity}, can generate a variety of non-isomorphic C*-covers --- C*-algebras generated by completely order-isomorphic copies of the given operator system. Key properties, such as $p=p^2$ for a projection $p$, or $ef=fe$ for a pair of commuting operators $e$ and $f$, can be forgotten in certain C*-covers of an operator system. Therefore operator systems considered in the quantum information literature often arise as subsystems of specific C*-covers instead of being defined as abstract operator systems without a specified C*-cover (c.f. \cite{lupini2020perfect}).

In recent work \cite{araiza2020abstract}, the authors abstractly characterized the elements of an operator system which arise as projections in the corresponding C*-envelope (the canonical ``smallest'' C*-cover). In joint work with Tomforde \cite{araiza2021universal,araiza2021matricial}, the authors used these notions to characterize quantum and quantum commuting correlations entirely in the language of abstract operator systems and their states. These results demonstrate that abstract operator systems have a sufficiently rich theory to capture problems in quantum information without reference to ambient C*-algebras.

In this paper, we generalize previous work, particularly the results of \cite{araiza2021universal}, to construct a family of universal operator systems each spanned by a finite set of projections $\{p_1, p_2, \dots, p_N\}$ satisfying a finite set of linear relations. Provided there exists at least one family of projections $\{P_1, P_2, \dots, P_N\}$ in $B(H)$ satisfying a set of relations $\cc R$, then a universal operator system $\cc U_{\cc R}$ exists and has the following properties:
\begin{enumerate}
\item $\cc U_{\cc R}$ is spanned by its unit $e$ and positive elements $p_1, \dots, p_N$ satisfy the relations in $\cc R$,
\item the elements $p_1, \dots, p_N$ are projections in the C*-envelope of $\cc U_{\cc R}$, and
\item if $Q_1, \dots, Q_N$ are projections on a Hilbert space $K$ which, together with $I_K$, satisfy the relations in $\cc R$, then the mapping defined by $e \mapsto I_K$ and $p_i \mapsto Q_i$ is completely positive.
\end{enumerate}
The operator system $\cc U_{\cc R}$ is constructed from elementary ingredients as an inductive limit of operator systems, without reference to ``concrete'' operator systems arising from known C*-algebras. Furthermore, for each integer $k \in \mathbb{N}$, we construct another universal operator system $\cc U_{\cc R}^k$ which is $k$-minimal and satisfies a similar universal property in the category of $k$-minimal operator systems (or $k$-AOU spaces, in the language of \cite{araiza2021matricial}). Operator systems which are $k$-minimal were first studied by Xhabli in \cite{xhabli2012super}, where they are realized as operator subsystems of direct sums of matrix algebras of size no greater than $k \times k$. The universal operator system $\cc U_{\cc R}^k$ has the property that the map $e \mapsto I$ and $p_i \mapsto Q_i$ is completely positive whenever $\text{span} \{I, Q_1, Q_2, \dots, Q_N\} \subseteq B(H)$ is $k$-minimal (for example, this occurs when $\dim(H) \leq k$).

To demonstrate the usefulness of the universal operator systems $\cc U_{\cc R}$ and $\cc U_{\cc R}^k$ above, we study two problems in quantum information theory: Tsirelson's problems on correlation sets and Zauner's conjecture on SIC-POVMs. Each of these problems can be formulated in terms of projections on Hilbert spaces satisfying certain relations, allowing us to make use of the operator system $\cc U_{\cc R}$. Furthermore, it is important to distinguish the case when the dimension of the Hilbert space is constrained to be finite in both of these applications, allowing us to make use of the $k$-minimal operator system $\cc U_{\cc R}^k$.

For quantum correlations, we show that by choosing input-output parameters $n,m \in \mathbb{N}$ and choosing the relations $\cc R$ to be the non-signalling conditions, we recover a hierarchy of AOU spaces 
\[ V_{loc}(n,m), V_{qa}(n,m), V_{qc}(n,m), V_{ns}(n,m) \]
which is dual, in the sense of Kadison duality \cite{kadison1951representation}, to the hierarchy of quantum correlation sets $C_{loc}(n,m) \subseteq C_{qa}(n,m) \subseteq C_{qc}(n,m) \subseteq C_{ns}(n,m)$. The positive cone of each AOU space in the hierarchy is constructed as an inductive limit of cones. Thus it could be possible to distinguish the various correlation sets by studying the inductive limits involved in the definition of these AOU spaces. Since Connes' embedding problem is equivalent to asking if $V_{qa}(n,m) = V_{qc}(n,m)$ for all input-output parameters $n,m \in \mathbb{N}$, our constructions yield a potentially new path for approaching this problem.

For Zauner's conjecture, we devise new necessary conditions for the existence of a SIC-POVM in the matrix algebra $M_d$. A SIC-POVM is a family of rank one projections $P_1, P_2, \dots, P_{d^2}$ which satisfy the relation $\sum P_i = d I$ and $\Tr(P_i P_j) = \frac{1}{d+1}$ whenever $i \neq j$. It was conjectured by Zauner that a SIC-POVM exists in every dimesion $d$. However, this conjecture has only been verified for finitely many values of $d$ \cite{Fuchs2017SICs}. We study this problem by considering the universal $d$-minimal operator system $\cc U_{\cc R}^d = \text{span} \{e, p_1, p_2, \dots, p_{d^2}\}$ satisfying the single relation $\sum p_i = de$ where $e$ is unit of $\cc U_{\cc R}$. Whenever a SIC-POVM $\{P_1, P_2, \dots, P_{d^2}\}$ exists in $M_d$, it follows from the universal properties of $\cc U_{\cc R}$ that the mapping $\pi: \cc U_{\cc R} \to M_d$ defined by $\pi(p_i) = P_i$ is unital and completely positive. Using this observation, we uncover necessary conditions on the operator system $\cc U_{\cc R}$ which must hold whenever a SIC-POVM exists. We conclude the paper with a remark on how a similar approach gives rise to necessary conditions for the existence of families of $d+1$ mutually unbiased bases in $\mathbb{C}^d$, another important open problem in quantum information theory \cite{Raynal2011MUBs}.

The paper is organized as follows. In Section 2, we introduce notation and provide preliminary details on operator systems, $k$-minimality, and abstract projections. In Sections 3 and 4, we develop the universal operator system $\cc U_{\cc R}$. In Section 5, we develop the universal $k$-minimal operator systems $\cc U_{\cc R}^k$. In Section 6, we consider applications to quantum correlation sets, and in Section 7 we consider applications to SIC-POVMs and mutually unbiased bases.

\section{Preliminaries} \label{sec: preliminaries}

In this section, we recall some basic facts from the theory of operator systems as well as preliminary results on $k$-AOU spaces and projections in operator systems. We begin by mentioning the notation used in this paper. We let $\mathbb{N}, \mathbb{R}$, and $\mathbb{C}$ denote the sets of natural numbers, real numbers, and complex numbers, respectively. For each $n \in \mathbb{N}$, we let $[n] := \{1,2,\dots,n\}$. For each $n,k \in \mathbb{N}$, we let $M_{n,k}$ denote the set of $n \times k$ matrices with entries in $\mathbb{C}$, and we let $M_n := M_{n,n}$. For each $n \in \mathbb{N}$, we let $M_n^+$ denote the cone of positive semidefinite matrices. We let $M_{n,k}(\mathbb{R})$ denote the set of $n \times k$ matrices with entries in $\mathbb{R}$. Given matrices $A \in M_n$ and $B \in M_k$, we let $A \otimes B$ denote the Kronecker tensor product.

\subsection{Operator systems and completely positive maps}

A \textbf{$*$-vector space} is a complex vector space $\mathcal V$ together with an conjugate-linear involution $*: \mathcal{V} \to \mathcal{V}$. An element $x \in \mathcal V$ such that $x^* = x$ is called \textbf{hermitian} and we denote the real subspace of all hermitian elements of $\mathcal{V}$ by $\mathcal V_h$.
If $\mathcal{V}$ is a $*$-vector space, a \textbf{cone} is a subset $C \subseteq \mathcal{V}_h$ with $\alpha C \subseteq C$ for all $\alpha \in [0, \infty)$ and such that $C + C \subseteq C.$ We will say the cone $C$ is \textbf{proper} if $C \cap -C = \{ 0 \}$.
An \textbf{ordered $*$-vector space} $(\mathcal{V},C)$ consists of a $*$-vector space $\mathcal{V}$ with a proper cone $C$.  For any ordered vector space $(\mathcal{V},C)$ we may define a partial order on $\mathcal{V}_h$ by $v \leq w$ (equivalently $w \geq v$) if and only if $w-v \in C$. 
If $(\mathcal{V},C)$ is an ordered $*$-vector space, an element $e \in \mathcal{V}_h$ is called an \textbf{order unit} if for all $v \in \mathcal{V}_h$ there exists $r > 0$ such that $re \geq v$.  An order unit $e$ is called \textbf{Archimedean} if whenever $re+v \geq 0$ for all real $r >0$, then $v \geq 0$.  An \textbf{Archimedean order unit space} (or \textbf{AOU space} for short) is a triple $(\mathcal{V},C,e)$ such that $(\mathcal{V},C)$ is an ordered $*$-vector space and $e$ is an Archimedean order unit for $(\mathcal{V},C)$. 
If $e$ is an order unit for $(\mathcal{V},C)$, the \textbf{Archimedean closure} of $C$ is defined to be the set of $x \in \mathcal{V}_h$ with the property that $re + x \in C$ for all $r > 0$. In general the Archimedean closure of a proper cone $C$ may not be proper.
If $\mathcal{V}$ is a complex vector space, then for any $n \in \mathbb{N}$ the vector space of $n \times n$ matrices with entries in $\mathcal{V}$ is denoted $M_n(\mathcal{V})$.  We see that $M_n(\mathcal{V})$ inherits a $*$-operation by $(a_{i,j})_{i,j}^* = (a_{j,i}^*)_{i,j}$. Let $\mathcal{V}$ be a $*$-vector space.  A family of \textbf{matrix cones} $\{ \mathcal C_n \}_{n=1}^\infty$ is a collection such that $\mathcal C_n$ is a proper cone of $M_n(\mathcal{V})$ for all $n \in \mathbb{N}$.  We call a family of matrix cones $\{\mathcal C_n \}_{n=1}^\infty$ a \textbf{matrix ordering} if $$\alpha^* \mathcal C_n \alpha \subseteq \mathcal C_m$$ for all $\alpha \in M_{n,m} (\mathbb{C})$. We often use a calligraphic symbol such as $\mathcal{C}$ to denote a matrix ordering; i.e. $\mathcal{C} := \{\mathcal C_n\}_{n=1}^\infty$. When $\mathcal{C}$ is a matrix ordering, we let $\mathcal{C}_n$ denote the $n\textsuperscript{th}$ matrix cone of the matrix ordering.
If $x \in \mathcal{V}$, for every $n \in \mathbb{N}$ we define $$x_n := I_n \otimes x = \left( \begin{smallmatrix} x & &  \\ & \ddots & \\  & & x \end{smallmatrix} \right) \in M_n(\mathcal{V}).$$  An \textbf{operator system} is a triple $(\mathcal{V}, \mathcal{C}, e)$ consisting of a $*$-vector space $\mathcal{V}$, a matrix ordering $\mathcal{C}$ on $\mathcal{V}$, and an element $e \in \mathcal{V}$ such that $(\mathcal{V}, \mathcal{C}_n, e_n)$ is an AOU space for all $n \in \mathbb{N}$. In this case, we call $e$ an \textbf{Archimedean matrix order unit}. If we only have that $e$ is an order unit for each $(\mathcal{V}, \mathcal{C}_n)$, then we call $e$ a \textbf{matrix order unit}. We often let $\mathcal{V}$ denote the operator system $(\mathcal{V}, \mathcal{C}, e)$ when the unit and matrix ordering are unspecified or clear from context.

If $(\mathcal{V},C)$ and $(\mathcal W,D)$ are ordered $*$-vector spaces, a linear map $\phi : \mathcal{V} \to \mathcal{W}$ is called \textbf{positive} if $\phi(C) \subseteq D$.  A positive linear map $\phi : \mathcal{V} \to \mathcal{W}$ is an order isomorphism if $\phi$ is a bijection and $\phi(C) = D$. An injective map $\phi: \mathcal{V} \to \mathcal{W}$ is called an \textbf{order embedding} if it is an order isomorphism onto its range.
If $\mathcal{V}$ and $\mathcal{W}$ are $*$-vector spaces and $\phi : \mathcal{V} \to \mathcal{W}$ is a linear map, then for each $n \in \mathbb{N}$ the map $\phi$ induces a linear map $\phi_n : M_n(\mathcal{V}) \to M_n(\mathcal{W})$ by $\phi_n ( (a_{i,j})_{i,j}) = ( \phi(a_{i,j}) )_{i,j}$.  If $(\mathcal{V}, \mathcal{C}, e)$ and $(\mathcal{W}, \mathcal{D}, f)$ are operator systems, a linear map $\phi : \mathcal{V} \to \mathcal{W}$ is called \textbf{completely positive} if $\phi_n(\mathcal{C}_n) \subseteq \mathcal{D}_n$ for all $n \in \mathbb{N}$.  A completely positive $\phi : \mathcal{V} \to \mathcal{W}$ is called \textbf{unital} if $\phi(e) = f$.  A completely positive map $\phi : \mathcal{V} \to \mathcal{W}$ is called an  \textbf{complete order isomorphism} if $\phi$ is a bijection and $\phi (\mathcal{C}_n) = \mathcal{D}_n$ for all $n \in \mathbb{N}$. A linear map $\phi: \mathcal{V} \to \mathcal{W}$ is called a \textbf{complete order embedding} if $\phi$ is a complete order isomorphism onto its range. 

We now recall the representation theorem of Choi and Effros, and some consequences.

\begin{thm}[Choi-Effros, \cite{choi1977injectivity}] \label{thm: Choi-Effros}
Let $(\mathcal{V}, \mathcal{C}, e)$ be an operator system. Then there exists a Hilbert space $H$ and a unital complete order embedding $\pi: \mathcal{V} \to B(H)$.
\end{thm}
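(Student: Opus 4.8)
The plan is to follow the classical GNS-type construction, which produces the desired unital complete order embedding by assembling enough concrete representations to separate points and detect positivity at every matrix level. First I would recall that for an operator system $(\mathcal{V}, \mathcal{C}, e)$, a \textbf{state} is a unital positive linear functional $s : \mathcal{V} \to \mathbb{C}$, i.e.\ $s(\mathcal{C}_1) \subseteq [0,\infty)$ and $s(e) = 1$. The key separation fact to establish is: for every $n$ and every $v \in M_n(\mathcal{V})_h \setminus \mathcal{C}_n$, there exists a state $s$ on $\mathcal{V}$ such that $s_n(v) \in M_n$ is \emph{not} positive semidefinite. This is the analytic heart of the argument and I expect it to be the main obstacle; it is proved by a Hahn--Banach / separation argument. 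Concretely, since $e_n$ is an Archimedean matrix order unit, $\mathcal{C}_n$ is closed in the order-unit topology, so one separates the point $v$ from the closed proper cone $\mathcal{C}_n$ by a self-adjoint functional on $M_n(\mathcal{V})$; identifying $M_n(\mathcal{V})^* \cong M_n(\mathcal{V}^*)$ and using the matrix-ordering condition $\alpha^* \mathcal{C}_n \alpha \subseteq \mathcal{C}_m$ together with positivity of the functional on $\mathcal{C}_n$, one decomposes the separating functional into a sum of functionals of the form $v \mapsto \langle s_n(v)\xi, \xi\rangle$ for states $s$; the Archimedean property guarantees the resulting states are genuinely positive rather than merely positive on the interior. (One may alternatively invoke the abstract characterization that the matrix state space of $\mathcal{V}$, with its BW-topology, is a compact matrix convex set recovering the matrix ordering — but the self-contained route is the Hahn--Banach one.)

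Next I would build the representation from the states. For each state $s$ on $\mathcal{V}$, perform the operator-system GNS construction: the sesquilinear form $\langle x, y \rangle_s := s(y^* x)$ — interpreting products via a presentation of $\mathcal{V}$ inside some $B(K)$ given by Choi--Effros is circular, so instead one uses that each state $s$ dilates to a unital completely positive map into $B(H_s)$ by Stinespring-type arguments available for operator systems, or more directly, one uses the minimal Stinespring/Arveson dilation: every state, being a u.c.p.\ map $\mathcal{V} \to \mathbb{C} = M_1$, has a dilation, but again this presupposes a representation. The cleanest non-circular path is: take the universal representation to be the direct sum, over all $n \in \mathbb{N}$ and all pairs $(s, \xi)$ with $s$ a state on $\mathcal{V}$ and $\xi \in \mathbb{C}^n$ a unit vector, of the maps $v \mapsto s_n(v) \in M_n$ — but these are not multiplicative, only unital and completely positive, which is all we need: an operator system only needs an order embedding, not an algebra homomorphism.

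So the concrete plan is as follows. Let $\mathcal{S}$ denote the set of all states on $\mathcal{V}$. For each $n \in \mathbb{N}$ and each state $s \in \mathcal{S}$, the amplified map $s_n : M_n(\mathcal{V}) \to M_n$ is positive (since $\mathcal{V}$ is an operator system, $s_n(\mathcal{C}_n) \subseteq M_n^+$; this is itself a standard lemma, provable from the $n=1$ case using the Archimedean matrix order unit). Define $H := \bigoplus_{n} \bigoplus_{s \in \mathcal{S}} \mathbb{C}^n$ and $\pi : \mathcal{V} \to B(H)$ by $\pi(v) := \bigoplus_n \bigoplus_s (s_n(v_n) = I_n \otimes s(v))$; more economically one can take $H = \bigoplus_{s \in \mathcal{S}} H_s$ with $H_s$ from a minimal dilation, but the point is uniform: $\pi$ is linear, $*$-preserving, unital (each block sends $e$ to the identity), and completely positive. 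It remains to check $\pi$ is a \emph{complete order embedding}, i.e.\ $\pi_m(v) \geq 0$ in $B(H)^{(m)}$ forces $v \in \mathcal{C}_m$. But $\pi_m(v) \geq 0$ means $s_n((v)_{\text{amplified}}) \geq 0$ for all relevant blocks; specializing and using the separation fact from the first paragraph — if $v \notin \mathcal{C}_m$ there is a state $s$ with $s_m(v) \notin M_m^+$, contradicting positivity of the corresponding block of $\pi_m(v)$ — gives $v \in \mathcal{C}_m$. Injectivity follows since the order embedding property on $m=1$ already separates points (states separate points of an AOU space, again by Hahn--Banach using the Archimedean order unit). The main obstacle, to reiterate, is the separation lemma producing a state detecting non-positivity at level $m$; everything else is bookkeeping about direct sums and amplifications.
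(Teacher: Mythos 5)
Your proposed construction does not work as written: the ``key separation fact'' you isolate --- that for every $v \in M_n(\mathcal{V})_h \setminus \mathcal{C}_n$ there is a scalar state $s:\mathcal{V}\to\mathbb{C}$ with $s_n(v) \notin M_n^+$ --- is false for general operator systems. The condition ``$s_n(v) \geq 0$ for all states $s$'' defines the cone $\mathcal{C}_n^{\text{min}}$ of the \emph{minimal} operator system structure over the ground-level AOU space $(\mathcal{V},\mathcal{C}_1,e)$ (this is exactly the $k$-minimal structure of Section 2 with $k=1$), and $\mathcal{C}_n^{\text{min}}$ strictly contains $\mathcal{C}_n$ in general; already for $\mathcal{V}=M_2$ with its usual matrix ordering the two cones differ at level $n=2$. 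Correspondingly, your map $\pi(v)=\bigoplus_{n,s} s_n(I_n\otimes v)=\bigoplus_{n,s} s(v)I_n$ is just a relabelled direct sum of scalar states: it is unital and completely positive, and an order embedding at level $1$, but $\pi_m(w)\geq 0$ is equivalent to $s_m(w)\geq 0$ for all states $s$, i.e.\ to $w\in\mathcal{C}_m^{\text{min}}$, not to $w\in\mathcal{C}_m$. So $\pi$ is a complete order embedding only when $\mathcal{V}$ is $1$-minimal. The step where you ``decompose the separating functional into a sum of functionals of the form $v\mapsto\langle s_n(v)\xi,\xi\rangle$ for states $s$'' is precisely where this fails: a functional on $M_n(\mathcal{V})$ that is positive on $\mathcal{C}_n$ corresponds (via the Choi-type pairing $F \leftrightarrow \varphi_F$, $\langle\varphi_F(v)e_j,e_i\rangle = F(v\otimes E_{ij})$) to a \emph{completely positive map} $\mathcal{V}\to M_n$, not to a convex combination of amplified scalar states.

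The repair is to run your argument with matrix states rather than scalar states: separate $v\notin\mathcal{C}_n$ from the (Archimedean-closed, hence order-topology closed) cone $\mathcal{C}_n$ by a hermitian functional $F$ on $M_n(\mathcal{V})$ with $F(\mathcal{C}_n)\geq 0$ and $F(v)<0$, convert $F$ into a u.c.p.\ map $\varphi:\mathcal{V}\to M_n$ using the full matrix-ordering compatibility condition $\alpha^*\mathcal{C}_m\alpha\subseteq\mathcal{C}_l$ and the Archimedean matrix order unit, and then take $H=\bigoplus_{k}\bigoplus_{\varphi}\mathbb{C}^k$ over all u.c.p.\ maps $\varphi:\mathcal{V}\to M_k$, $k\in\mathbb{N}$. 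With that modification the bookkeeping in your last paragraph goes through. (For what it is worth, the paper does not prove this theorem at all --- it is quoted from Choi--Effros with a citation --- so there is no in-paper argument to compare against; the structure just described is the standard published proof.)
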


\noindent By Theorem \ref{thm: Choi-Effros}, every operator system arises as a subspace of $B(H)$, and hence every operator system generates a C*-algebra. However, that C*-algebra is not necessarily unique. By a \textbf{C*-cover}, we mean a pair $(\mathcal{A}, \pi)$ consisting of a C*-algebra $\mathcal{A}$ and a unital complete order embedding $\pi: \mathcal{V} \to \mathcal{A}$ such that $C^*(\pi(\mathcal{V})) = \mathcal{A}$. Among all C*-covers, there exist canonical ``smallest'' and ``largest'' ones. We will be concerned with the ``smallest'' C*-cover, called the \textit{C*-envelope}. A \textbf{C*-envelope} for an operator system $\mathcal{V}$ is a C*-cover, denoted $(C^*_e(\mathcal{V}), i)$, which satisfies the following universal property: if $(\mathcal{B},j)$ is another C*-cover, then the identity map $id: j(s) \mapsto i(s)$ from $j(\mathcal{V})$ to $i(\mathcal{V})$ extends uniquely to $*$-homomorphism $\pi: \mathcal{B} \to C^*_e(\mathcal{V})$. The following theorem asserts that every operator system has a C*-envelope, which is necessarily unique.

\begin{thm}[Hamana, \cite{hamana1979injective}] \label{thm: Hamana}
Let $\mathcal{V}$ be an operator system. Then there exists a C*-envelope $(C^*_e(\mathcal{V}), i)$ and it is unique up to $*$-isomorphism.
\end{thm}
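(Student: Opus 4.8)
The plan is to establish both existence and uniqueness through Hamana's theory of injective envelopes. \emph{Step 1: a minimal $\mathcal{V}$-projection.} Using Theorem~\ref{thm: Choi-Effros}, regard $\mathcal{V}$ as a unital operator subsystem of $B(H)$. Since $B(H)$ is injective for unital completely positive (ucp) maps by Arveson's extension theorem, the collection of \emph{$\mathcal{V}$-maps} --- ucp maps $\varphi\colon B(H)\to B(H)$ with $\varphi|_{\mathcal{V}}=\mathrm{id}$ --- is nonempty. Ordering the $\mathcal{V}$-maps by Hamana's seminorm order and invoking Zorn's lemma produces a minimal $\mathcal{V}$-map $\Phi$, which is automatically idempotent because $\Phi\circ\Phi$ is again a $\mathcal{V}$-map dominated by $\Phi$. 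The delicate points here are purely order-theoretic: that decreasing nets of $\mathcal{V}$-maps admit $\mathcal{V}$-map lower bounds (using point-weak-$*$ compactness of the ucp maps together with a composition argument) and that minimal $\mathcal{V}$-maps are projections.

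\emph{Step 2: the injective envelope as a C*-algebra.} Set $I(\mathcal{V}):=\Phi(B(H))$, with operator system structure and involution inherited from $B(H)$ and with the Choi-Effros product $x\odot y:=\Phi(xy)$. One verifies that $(I(\mathcal{V}),\odot)$ is a unital C*-algebra whose induced matrix order is the inherited one, that it is injective, and that the inclusion $i\colon\mathcal{V}\hookrightarrow I(\mathcal{V})$ is a unital complete order embedding. Minimality of $\Phi$ yields two properties I would isolate for later use: \emph{rigidity} --- the identity is the unique ucp map $I(\mathcal{V})\to I(\mathcal{V})$ fixing $\mathcal{V}$ --- and \emph{essentiality} --- a ucp map out of $I(\mathcal{V})$ that is completely isometric on $\mathcal{V}$ is completely isometric on all of $I(\mathcal{V})$. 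Define $C^*_e(\mathcal{V})$ to be the C*-subalgebra of $I(\mathcal{V})$ generated by $i(\mathcal{V})$; then $(C^*_e(\mathcal{V}),i)$ is a C*-cover by construction.

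\emph{Step 3: universal property and uniqueness.} Let $(\mathcal{B},j)$ be an arbitrary C*-cover, realized concretely in some $B(K)$. Injectivity of $I(\mathcal{V})$ extends $i\circ j^{-1}\colon j(\mathcal{V})\to I(\mathcal{V})$ to a ucp map $\pi\colon\mathcal{B}\to I(\mathcal{V})$ whose range generates $C^*_e(\mathcal{V})$. The main obstacle --- and the technical heart of the theorem --- is to promote $\pi$ to a surjective $*$-homomorphism onto $C^*_e(\mathcal{V})$ fixing $\mathcal{V}$; concretely, one shows that the Shilov boundary ideal $J$ (the largest closed two-sided ideal $J\subseteq\mathcal{B}$ such that $\mathcal{B}\to\mathcal{B}/J$ is completely isometric on $j(\mathcal{V})$) is well defined and that $\pi$ descends to a $*$-isomorphism $\mathcal{B}/J\cong C^*_e(\mathcal{V})$ with $\pi\circ j=i$. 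Rigidity and essentiality are what make this work: extending $j\circ i^{-1}$ to a ucp map $I(\mathcal{V})\to B(K)$ and composing with $\pi$ yields a ucp $\mathcal{V}$-map on $I(\mathcal{V})$, which by rigidity must be the identity; this forces the extensions in play to be complete isometries, after which a multiplicative-domain argument upgrades complete isometry to multiplicativity modulo $J$. (Alternatively one can argue via Arveson's boundary representations and the Dritschel-McCullough existence theorem, but the injective-envelope route is more self-contained.) Granting the universal property, uniqueness is immediate: two C*-envelopes induce $*$-homomorphisms in each direction fixing the canonical copies of $\mathcal{V}$, and their composites, being $*$-homomorphisms that fix a generating subsystem, are the respective identity maps.
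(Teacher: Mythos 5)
This theorem is quoted from Hamana's paper and the present paper gives no proof of its own; your outline is precisely Hamana's injective-envelope argument (minimal $\mathcal{V}$-projection, Choi--Effros product on $\Phi(B(H))$, rigidity/essentiality, Shilov boundary ideal), with the genuinely hard lemmas correctly identified and placed. The sketch is accurate and matches the cited source's approach, so there is nothing to add beyond noting that the claim in Step 3 that the range of the ucp extension $\pi$ generates $C^*_e(\mathcal{V})$ is only available after multiplicativity is established, not before.
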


\subsection{$k$-AOU spaces}

We will also be using facts regarding \textbf{k-AOU} spaces, which we will discuss briefly. The interested reader will find more details in \cite{araiza2021matricial}. 
\begin{defn}[$k$-Archimedean order unit space]\label{defn: k-aou space}
For any $k \in \mathbb{N}$, a \textbf{$k$-Archimedean order unit space} (or $k$-AOU space, for short) is a triple $( \mathcal{V}, C, e)$ consisting of
\begin{itemize}
\item[(i)] $\mathcal{V}$, a $*$-vector space, 
\item[(ii)] $C \subseteq M_k(\mathcal{V})_h$, a proper cone, \textbf{compatible} in the sense that for each $\alpha \in M_{k} (\mathbb{C})$, we have $\alpha^* C \alpha \subseteq C$, and
\item[(iii)] $e \in \mathcal{V}$ with the property that $e_k := I_k \otimes e$ is an Archimedean order unit for $(M_k(\mathcal{V}), C)$.  
\end{itemize}
A pair $(\mathcal{V}, C)$ satisfying conditions $(i)$ and $(ii)$ is called a \textbf{$k$-ordered $*$-vector space}, and an element $e$ satisfying condition $(iii)$ is called a \textbf{$k$-Archimedean order unit} for the $k$-ordered vector space $(\mathcal{V},C)$.
\end{defn}

Next, we define the appropriate morphisms in the category of $k$-AOU spaces.

\begin{defn}[$k$-positive maps]
Let $k \in \mathbb{N}$, and suppose $(\mathcal{V},C)$ and $(\mathcal{W},D)$ are $k$-ordered $*$-vector spaces. A linear map $\phi: \mathcal{V} \to \mathcal{W}$ is called \textbf{$k$-positive} if $\phi_k(C) \subseteq D$. If $\phi$ is $k$-positive and injective with $\phi_k^{-1}(D) \subseteq C$, then $\phi$ is called a \textbf{$k$-order embedding}. A bijective $k$-order embedding is called a \textbf{$k$-order isomorphism}.
\end{defn}

In the case when $k=1$, it is clear our notion of a $k$-AOU space is identical to that of an AOU space, and that $k$-positive maps, $k$-order embeddings, and $k$-order isomorphisms are just positive maps, order embeddings, and order isomorphisms, respectively.

In \cite{PaulsenTodorovTomfordeOpSysStructures}, a variety of operator system structures were considered for AOU spaces. In particular, the authors constructed minimal and maximal operator system structures (minimal and maximal with respect to inclusions of matricial orderings). We wish to consider these structures when the initial object is a $k$-AOU space. The following definitions come from \cite{araiza2021matricial} and \cite{xhabli2012super}.

\begin{defn}[Operator System Structure]
Let $k \in \mathbb{N}$, and suppose $(\mathcal{V}, C, e)$ is a $k$-AOU space. If $\mathcal{C}$ is an Archimedean closed matrix ordering on $\mathcal{V}$ satisfying $\mathcal{C}_k = C$, then we say $\mathcal{C}$ \textbf{extends} $C$ or is an \textbf{extension} of $C$, and we call the operator system $(\mathcal{V}, \mathcal{C}, e)$ an \textbf{operator system structure} on $(\mathcal{V}, C, e)$.
\end{defn}

We will now focus on two operator system structures on $k$-AOU spaces. 

\begin{defn}[The $k$-minimal operator system structure on a $k$-AOU space]
Given a $k$-AOU space $(\mathcal{V}, C, e)$, we define
\[ C_n^\text{$k$-min} := \{ x \in M_n(\mathcal{V})_h : \alpha^* x \alpha \in C \text{ for all } \alpha \in M_{n,k} \} \]
for each $n \in \mathbb{N}$. If $C^{\text{$k$-min}}:= \{C_n^{\text{$k$-min}}\},$ then the triple $(\cc V, C^{\text{$k$-min}}, e)$ is called a \textbf{$k$-minimal operator system}. 
\end{defn}

\begin{defn}
Given $k \in \bb N$, let $(\mathcal{V}, C, e)$ be a $k$-AOU space. For each $n \in \mathbb{N}$, define
\[ D_n^\text{$k$-max}(\mathcal{V}) := \{ \alpha^* \text{diag}(s_1, \dots, s_m) \alpha: \alpha \in M_{mk,n} \text{ and } s_1, \dots, s_m \in C, m \in \bb N \}, \]
and let $C_n^\text{$k$-max}$ denote the Archimedean closure of $D_n^\text{$k$-max}$. If $C^{\text{$k$-max}}:= \{C_n^{\text{$k$-max}}\},$ then the triple $(\cc V, C^{\text{$k$-max}}, e)$ is called a \textbf{$k$-maximal operator system}. 
\end{defn} 

While we will make use of the $k$-minimal operator system structure extensively, we will only consider the $k$-maximal structure in the case when $k=1$. The details of this case can be found in \cite{PaulsenTodorovTomfordeOpSysStructures}. For more details regarding both the $k$-minimal and $k$-maximal structures, we refer the reader to \cite{xhabli2012super} and \cite{araiza2021matricial}.

It is a quick exercise to verify that given linear map $\phi: \cc V \to \mathcal W$ in which $\cc V$ is a $k$-AOU space equipped with the $k$-max structure and $\cc W$ is an operator system, that $\phi$ is completely positive if and only if it is $k$-positive. Similarly, if $\phi: \cc W \to \cc V$ where $\cc W$ is an operator system and $\cc V$ is a $k$-minimal operator system, then $\phi$ is $k$-positive if and only if it is completely positive. The next theorem will be useful for us later.

\begin{thm}[\cite{xhabli2012super} and \cite{araiza2021matricial}] \label{thm: k-min characterization}
Let $(\mathcal{V}, C, e)$ be a $k$-AOU space and let $n \in \mathbb{N}$. Then the following statements are equivalent:
\begin{enumerate}
    \item $x \in C^{k-\text{min}}_n$.
    \item $\varphi_n(x) \in M_{nk}^+$ for every $k$-positive map $\varphi: \mathcal{V} \to M_k$.
    \item $\varphi_n(x) \in M_{nk}^+$ for every unital $k$-positive map $\varphi: \mathcal{V} \to M_{k}$.
\end{enumerate}
In particular, the map
\[ \pi := \bigoplus_{\varphi \in \mathfrak{S}} \varphi \]
is a unital complete order embedding on $(\mathcal{V}, C^{k-\text{min}}, e)$, where $\mathfrak{S}$ denotes the set of all unital $k$-postive maps from $\mathcal{V}$ to $M_k$.
\end{thm}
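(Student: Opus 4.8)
The plan is to prove the chain $(1)\Rightarrow(2)\Rightarrow(3)\Rightarrow(1)$ and then read off the statement about $\pi$. Two elementary observations do most of the work. First, for any linear $\varphi:\mathcal{V}\to M_k$, any $x\in M_n(\mathcal{V})$, and any $\alpha\in M_{n,k}$, a check on matrix entries gives
\[ \varphi_k(\alpha^* x\alpha)=\alpha^*\varphi_n(x)\alpha, \]
where on the right $\alpha$ acts on $M_n(M_k)\cong M_{nk}$ via $\alpha\otimes I_k$. Second, a matrix $Y\in M_{nk}$ is positive semidefinite iff $\alpha^* Y\alpha\in M_{k^2}^+$ for every $\alpha\in M_{n,k}$; the nontrivial direction follows by putting a test vector in $\mathbb{C}^n\otimes\mathbb{C}^k$ into Schmidt form and absorbing its (at most $k$) left Schmidt vectors into the columns of a suitable $\alpha$. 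Given these, $(1)\Rightarrow(2)$ is immediate: if $x\in C^{k-\text{min}}_n$ then $\alpha^* x\alpha\in C$ for all $\alpha$, so for any $k$-positive $\varphi$ we get $\alpha^*\varphi_n(x)\alpha=\varphi_k(\alpha^* x\alpha)\in M_{k^2}^+$ for all $\alpha$, hence $\varphi_n(x)\in M_{nk}^+$. And $(2)\Rightarrow(3)$ is trivial.

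For $(3)\Rightarrow(1)$, fix $\alpha\in M_{n,k}$ and set $y:=\alpha^* x\alpha$; by the identity above and $(3)$, $\varphi_k(y)\in M_{k^2}^+$ for every unital $k$-positive $\varphi:\mathcal{V}\to M_k$. Since $\alpha$ is arbitrary, it suffices to prove the claim: \emph{if $y\in M_k(\mathcal{V})_h$ and $\varphi_k(y)\in M_{k^2}^+$ for every unital $k$-positive $\varphi:\mathcal{V}\to M_k$, then $y\in C$.} I would prove this using the Choi-type correspondence $\psi\mapsto\widehat{\psi}$, the linear bijection between linear maps $\psi:\mathcal{V}\to M_k$ and linear functionals on $M_k(\mathcal{V})$ given by $\widehat{\psi}(z)=\eta^*\psi_k(z)\eta=\sum_{p,q}\psi(z_{pq})_{pq}$ with $\eta=\sum_r e_r\otimes e_r\in\mathbb{C}^k\otimes\mathbb{C}^k$. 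The key point --- which I expect to be the main obstacle --- is the equivalence \emph{$\psi$ is $k$-positive $\iff$ $\widehat{\psi}$ is a positive functional on $M_k(\mathcal{V})$}. The forward implication is trivial; the reverse is exactly where the compatibility axiom $\beta^* C\beta\subseteq C$ ($\beta\in M_k$) is needed, via the identity $\zeta^*\psi_k(z)\zeta=\widehat{\psi}(\beta^* z\beta)$ valid for a vector $\zeta\in\mathbb{C}^{k^2}$ with matrix form $\beta\in M_k$.

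To prove the claim, suppose $y\notin C$. Since $(\mathcal{V},C,e)$ is a $k$-AOU space, $(M_k(\mathcal{V}),C,e_k)$ is an AOU space, so by the standard duality for AOU spaces --- an Archimedean-closed proper cone is precisely the set of elements on which all states are nonnegative --- there is a state $f$ on $M_k(\mathcal{V})$ with $f(y)<0$. Averaging any fixed state over the unital order automorphisms $z\mapsto(U^*\otimes1)z(U\otimes1)$, $U\in U(k)$ (which preserve $C$ by compatibility and fix $e_k$), produces a state $g$ whose restriction to $M_k\otimes\mathbb{C}e\cong M_k$ is the normalized trace; replacing $f$ by $(1-t)f+tg$ for small $t>0$ we may assume $f(y)<0$ while $f|_{M_k\otimes\mathbb{C}e}$ is given by an invertible density matrix $\rho$. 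Let $\psi$ be the unique linear map with $\widehat{\psi}=f$; then $\psi$ is $k$-positive by the equivalence above, and evaluating $\widehat{\psi}$ on $M_k\otimes\mathbb{C}e$ gives $\psi(e)=\rho^T$, which is invertible. Hence $\varphi(v):=(\rho^T)^{-1/2}\psi(v)(\rho^T)^{-1/2}$ is unital, and it remains $k$-positive since $\varphi_k(z)=(I_k\otimes(\rho^T)^{-1/2})\psi_k(z)(I_k\otimes(\rho^T)^{-1/2})$; thus $\varphi\in\mathfrak{S}$. But $\varphi_k(y)\in M_{k^2}^+$ would force $\psi_k(y)\in M_{k^2}^+$, contradicting $\eta^*\psi_k(y)\eta=\widehat{\psi}(y)=f(y)<0$. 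Therefore $y\in C$, and $(3)\Rightarrow(1)$ follows.

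Finally, for the statement about $\pi=\bigoplus_{\varphi\in\mathfrak{S}}\varphi$: unitality is clear since each $\varphi$ is unital, and for $x\in M_n(\mathcal{V})_h$ the element $\pi_n(x)=\bigoplus_{\varphi}\varphi_n(x)$ is positive exactly when each $\varphi_n(x)$ is, which by the equivalence $(1)\Leftrightarrow(3)$ happens exactly when $x\in C^{k-\text{min}}_n$; injectivity of $\pi$ then follows by applying properness of $C^{k-\text{min}}_1$ to the real and imaginary parts of an element of the kernel. Hence $\pi$ is a unital complete order embedding on $(\mathcal{V},C^{k-\text{min}},e)$. Beyond this packaging, the genuinely delicate ingredient is the $k$-positivity-versus-positivity equivalence for the Choi correspondence, together with the averaging/perturbation step making the density matrix invertible; everything else reduces to matrix-entry bookkeeping and the standard state-space duality for AOU spaces.
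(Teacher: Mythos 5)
The paper does not prove this theorem---it is quoted from \cite{xhabli2012super} and \cite{araiza2021matricial}---so there is no internal proof to compare against; your argument is correct and follows the standard route in those sources: the entrywise identity $\varphi_k(\alpha^*x\alpha)=(\alpha\otimes I_k)^*\varphi_n(x)(\alpha\otimes I_k)$ together with the Schmidt-decomposition test for positivity in $M_{nk}$ gives $(1)\Rightarrow(2)\Rightarrow(3)$, and the Choi correspondence between $k$-positive maps into $M_k$ and positive functionals on $(M_k(\mathcal{V}),C)$ (where compatibility is indeed the crucial axiom), combined with the AOU separation theorem and the Haar-averaging trick to make $\psi(e)$ invertible, gives $(3)\Rightarrow(1)$. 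All the steps check out, including the reduction of unitality via $\varphi=(\rho^T)^{-1/2}\psi(\cdot)(\rho^T)^{-1/2}$ and the properness/injectivity argument for $\pi$.
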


\subsection{Abstract projections}

Suppose that $\mathcal{V}$ is an operator system, $p \in \mathcal{V}$, and that $j(p)$ is a projection in some C*-cover $(\mathcal{A},j)$ for $\mathcal{V}$. By Theorem \ref{thm: Hamana}, there exists a $*$-homomorphism $\pi: \mathcal{A} \to C^*_e(\mathcal{V})$ such that $\pi(j(p)) = i(p)$. Since the image of a projection under a $*$-homomorphism is a projection, we conclude that the image of $p$ in $C^*_e(\mathcal{V})$ is a projection.

By an \textbf{abstract projection}, we mean an element $p$ in an operator system $\mathcal{V}$ whose image in $C^*_e(\mathcal{V})$ is a projection. Equivalently, by the above argument, an abstract projection is an element whose image is a projection in some C*-cover.

Abstract projections can be characterized intrinsically in terms of the matrix ordering and order unit. To do so, we first define a cone which will be useful throughout the paper. In the following, and throughout the paper, we let $J_n$ denote $n \times n$ matrix whose entries are all equal to 1.

\begin{defn} \label{defn: C(p)}
Let $(\mathcal{V}, \mathcal{C}, e)$ be an operator system, and suppose that $p \in \mathcal{V}_h$ and $0 \leq p \leq e$. Then $\mathcal{C}(p) = \{ \mathcal{C}(p)_n\}_{n=1}^\infty$ denotes the matrix ordering defined by: $x \in \mathcal{C}(p)_n$ if and only if for every $\epsilon > 0$ there exists $t > 0$ such that
\[ x \otimes J_2 + \epsilon I_n \otimes (p \oplus p^{\perp}) + t I_n \otimes (p^{\perp} \oplus p) \in \mathcal{C}_{2n} \]
where $p^{\perp} := e - p$.
\end{defn}

\begin{rmk}
In \cite{araiza2021universal}, the notation $\mathcal{C}(p)$ denoted a matrix ordering on the vector space $M_2(\mathcal{V})$ rather than a matrix ordering on $\mathcal{V}$. However, intersecting the matrix ordering from \cite{araiza2021universal} with the image of $\mathcal{V}$ under the complete order embedding $x \mapsto x \otimes J_2$ yields the matrix ordering defined above.
\end{rmk}

For a given $p \in \mathcal{V}$ satisfying the conditions of Definition \ref{defn: C(p)}, it may not be the case that $\mathcal{C}(p)$ is a proper matrix ordering. However, it is always the case that $\mathcal{C}_n \subseteq \mathcal{C}(p)_n$. Indeed, if $x \in \mathcal{C}_n$, then
\[ x \otimes J_2 + \epsilon I_n \otimes (p \oplus p^{\perp}) + \epsilon I_n \otimes (p^{\perp} \oplus p) = x \otimes J_2 + \epsilon I_{2n} \otimes e \in \mathcal{C}_{2n}. \]
When $\mathcal{C}(p)$ happens to be a proper matrix ordering, the following holds.

\begin{prop}[{\cite[Lemma 3.6]{araiza2021universal}}] \label{prop: p projection in C(p)}
Let $(\mathcal{V}, \mathcal{C}, e)$ be an operator system. Suppose $0 \leq p \leq e$. If $\mathcal{C}(p)$ is proper, then $p$ is an abstract projection in $(\mathcal{V}, \mathcal{C}(p), e)$.
\end{prop}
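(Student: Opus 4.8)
The plan is to realize $p$ as a genuine projection in some C*-cover of $(\mathcal{V}, \mathcal{C}(p), e)$; by the discussion preceding Definition~\ref{defn: C(p)}, this suffices to conclude $p$ is an abstract projection. First I would observe that since $\mathcal{C}(p)$ is assumed proper and contains $\mathcal{C}$, and since $e$ remains a matrix order unit for $\mathcal{C}(p)$, the triple $(\mathcal{V}, \overline{\mathcal{C}(p)}, e)$ (taking Archimedean closures level by level) is an operator system — one must check that the Archimedean closure is still proper, which should follow from the defining inequality of $\mathcal{C}(p)_n$ by absorbing the $\epsilon I_n \otimes e$ terms into the $\epsilon$-slack already present in the definition, so that $\mathcal{C}(p)$ is in fact already Archimedean closed. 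Then by Theorem~\ref{thm: Choi-Effros} there is a unital complete order embedding $\pi \colon (\mathcal{V}, \mathcal{C}(p), e) \hookrightarrow B(H)$, and it remains to show $\pi(p)$ is a projection in $B(H)$.

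To see that $\pi(p)$ is a projection, the key is to exploit the specific form of the elements in $\mathcal{C}(p)_2$ applied to cleverly chosen matrices over $\mathcal{V}$. Writing $P = \pi(p)$ and $P^\perp = I - P$, the membership condition says that for a matrix $x \in M_n(\mathcal{V})$ lying in $\mathcal{C}(p)_n$, the operator $\pi_{2n}(x \otimes J_2) + \epsilon\, I_n \otimes (P \oplus P^\perp) + t\, I_n \otimes (P^\perp \oplus P) \geq 0$. The strategy is to plug in $x$ equal to various scalar combinations of $e$ and $p$ (which lie in $\mathcal{C}$, hence in $\mathcal{C}(p)$) and extract operator inequalities forcing $P^2 = P$. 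Concretely, one wants to derive that $P^\perp P P^\perp \leq \epsilon(\text{something})$ for all $\epsilon$, hence $P^\perp P P^\perp = 0$, which for a positive contraction $P$ gives $P^\perp P = 0$, i.e. $P = P^2$. I would reconstruct the exact algebraic identity used in \cite[Lemma 3.6]{araiza2021universal}: the role of the $J_2$ tensor and the asymmetric placement of $p \oplus p^\perp$ versus $p^\perp \oplus p$ is precisely to encode, after conjugating by suitable isometries/partial isometries in $M_{2n,n}(\mathbb{C})$ or by compressing with vectors of the form $(\xi, \eta)$, the scalar inequality $\langle P\xi, \xi\rangle \langle P^\perp \eta, \eta \rangle \geq |\langle \text{cross term}\rangle|^2$ that pins down the projection relation.

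The main obstacle I anticipate is the second step: carefully choosing the test elements $x \in \mathcal{C}(p)_n$ and the scalar matrices $\alpha \in M_{n,m}(\mathbb{C})$ for compression so that the resulting operator inequality is sharp enough to force idempotency, rather than merely some weaker order-theoretic condition. In particular one must be careful that $\mathcal{C}(p)$ being merely proper (not a priori the cone of a concrete operator system with $p$ a projection) still lets the argument run — this is where properness is essential, since without it the "operator system" $(\mathcal{V}, \mathcal{C}(p), e)$ would not embed into any $B(H)$ and the C*-cover argument collapses. A secondary technical point is handling the Archimedean closure correctly: I would either show $\mathcal{C}(p)$ is automatically Archimedean closed (likely, given the $\forall \epsilon$ quantifier built into its definition), or pass to the closure and verify properness is preserved, perhaps using that $e$ dominates $p$ and $p^\perp$ to control the closure. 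Once the concrete embedding is in hand, verifying $P = P^2$ is a finite-dimensional linear-algebra manipulation at the level of $2\times 2$ operator matrices, which I would not belabor.
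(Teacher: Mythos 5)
There is a genuine gap in your third step. You propose to take the Choi--Effros embedding $\pi$ of $(\mathcal{V},\mathcal{C}(p),e)$ into $B(H)$ and then prove that the operator $P=\pi(p)$ satisfies $P^2=P$. But a unital complete order embedding does not, in general, carry an abstract projection to a concrete projection, so the identity you are trying to force is not a consequence of the available order data. For instance, $\ell^\infty_2=\mathrm{span}\{\delta_1,\delta_{-1}\}$ embeds unitally and completely order isomorphically into the diagonal of $M_3$ via $f\mapsto \mathrm{diag}\bigl(f(1),f(-1),\tfrac12(f(1)+f(-1))\bigr)$, and the image of the honest projection $\delta_1$ is $\mathrm{diag}(1,0,\tfrac12)$, which is not idempotent. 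Hence no choice of test elements $x\in\mathcal{C}(p)_n$ and compressions can yield $P^\perp P P^\perp=0$: that identity can genuinely fail for the representation you fixed, even when $p$ \emph{is} an abstract projection. The definition requires a projection image in \emph{some} C*-cover (equivalently, in the C*-envelope), and an arbitrary Choi--Effros representation need not be that cover. (Your preliminary point that $\mathcal{C}(p)$ is already Archimedean closed is correct and can be justified by absorbing $e\otimes J_2\le 2\,(p\oplus p^\perp)+2\,(p^\perp\oplus p)$ into the $\epsilon$ and $t$ terms, but it does not touch the main difficulty.)

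The intended route, taken in the cited Lemma 3.6 of \cite{araiza2021universal}, is intrinsic rather than spatial: by Theorem \ref{thm: abstract projections}, $p$ is an abstract projection in an operator system $(\mathcal{V},\mathcal{D},e)$ if and only if $\mathcal{D}=\mathcal{D}(p)$. Taking $\mathcal{D}=\mathcal{C}(p)$, which is proper by hypothesis, the content of the proposition is the idempotency $\bigl(\mathcal{C}(p)\bigr)(p)\subseteq\mathcal{C}(p)$ (the reverse inclusion holds for any such cone, as noted before the proposition). This inclusion is an $\epsilon$--$t$ bookkeeping argument of exactly the kind carried out in the proof of Proposition \ref{prop: inductive limit projection}: one unwinds the two nested quantifiers and adds positive elements of the form $\left(\begin{smallmatrix}\delta&\epsilon\\ \epsilon&r\end{smallmatrix}\right)\otimes p\otimes I_n$ and $\left(\begin{smallmatrix}r&\epsilon\\ \epsilon&\delta\end{smallmatrix}\right)\otimes p^\perp\otimes I_n$ to trade the off-diagonal error for diagonal terms, concluding that $x$ itself satisfies the defining condition of $\mathcal{C}(p)_n$. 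Your outline contains no substitute for this step, and the concrete-embedding strategy cannot supply one.
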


The case when $\mathcal{C}_n = \mathcal{C}(p)_n$ is of special importance.

\begin{thm}[{\cite[Theorem 5.10]{araiza2020abstract}}] \label{thm: abstract projections}
Let $(\mathcal{V}, \mathcal{C}, e)$ be an operator system, and suppose that $p \in \mathcal{V}_h$ and $0 \leq p \leq e$. Then $p$ is an abstract projection if and only if $\mathcal{C} = \mathcal{C}(p)$.
\end{thm}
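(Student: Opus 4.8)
The plan is to prove the two implications separately; the reverse implication is essentially free, while the forward implication (``$p$ an abstract projection $\Rightarrow$ $\mathcal{C} = \mathcal{C}(p)$'') contains all the work. For the reverse implication: if $\mathcal{C} = \mathcal{C}(p)$, then $\mathcal{C}(p)$ is a proper matrix ordering (being the matrix ordering of the operator system $\mathcal{V}$), so Proposition \ref{prop: p projection in C(p)} applies directly and shows that $p$ is an abstract projection in $(\mathcal{V}, \mathcal{C}(p), e) = (\mathcal{V}, \mathcal{C}, e)$.

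For the forward implication, recall that the containment $\mathcal{C}_n \subseteq \mathcal{C}(p)_n$ always holds, so it suffices to prove $\mathcal{C}(p)_n \subseteq \mathcal{C}_n$ for every $n$. Let $(C^*_e(\mathcal{V}), i)$ be the C*-envelope and set $P := i(p)$ and $P^\perp := i(e) - P = 1 - P$; by hypothesis $P$ is a projection in $C^*_e(\mathcal{V})$. Since $i$ is a unital complete order embedding, a hermitian $y \in M_n(\mathcal{V})$ lies in $\mathcal{C}_n$ if and only if $i_n(y) \ge 0$ in $M_n(C^*_e(\mathcal{V}))$, and $i_n(e_n) = 1_n$. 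So fix $x \in \mathcal{C}(p)_n$ and $\epsilon > 0$, take the corresponding $t > 0$ from Definition \ref{defn: C(p)}, and apply $i_{2n}$ to obtain
\[ i_n(x) \otimes J_2 + \epsilon\, I_n \otimes (P \oplus P^\perp) + t\, I_n \otimes (P^\perp \oplus P) \ \ge\ 0 \quad \text{in } M_{2n}(C^*_e(\mathcal{V})). \]

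The key move is to compress this inequality by the column isometry built from $P$. Let $V$ be the column over $C^*_e(\mathcal{V})$ with first entry $P$ and second entry $P^\perp$, so $V^*V = P + P^\perp = 1$, and set $W := I_n \otimes V$. A direct computation --- which is precisely what the definition of $\mathcal{C}(p)$ is designed to accommodate --- gives
\[ V^*(a \otimes J_2) V = a \quad (a \in C^*_e(\mathcal{V})), \qquad V^*(P \oplus P^\perp)V = 1, \qquad V^*(P^\perp \oplus P)V = 0. \]
Conjugating the displayed inequality by $W$ therefore yields $i_n(x) + \epsilon\, 1_n \ge 0$ in $M_n(C^*_e(\mathcal{V}))$, i.e. $x + \epsilon\, e_n \in \mathcal{C}_n$. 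Since $\epsilon > 0$ was arbitrary and $e_n$ is an Archimedean order unit for $\mathcal{C}_n$, we conclude $x \in \mathcal{C}_n$, as desired.

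The only step requiring genuine insight is choosing the compression $V$ and verifying the three identities above; the remainder is bookkeeping with the embedding $i$ and the Archimedean property. In carrying this out one should be careful about the convention for $J_2$ and the placement of the $M_2$ tensor leg inside $M_{2n}(\mathcal{V}) \cong M_n(\mathcal{V}) \otimes M_2$, so that the three compression identities come out exactly as stated, but no real difficulty is expected there. (If one prefers to avoid working inside the abstract C*-envelope, fix a faithful unital representation $C^*_e(\mathcal{V}) \subseteq B(\mathcal{H})$ and take $V \colon \mathcal{H} \to \mathcal{H} \oplus \mathcal{H}$, $V\xi = P\xi \oplus P^\perp\xi$; the computation is identical.)
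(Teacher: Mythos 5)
Your proof is correct. The paper does not actually reprove this theorem --- it is imported from \cite[Theorem 5.10]{araiza2020abstract} --- but your argument is precisely the standard one: the reverse direction is an immediate application of Proposition \ref{prop: p projection in C(p)} (properness of $\mathcal{C}(p)=\mathcal{C}$ being automatic), and the forward direction works by pushing the defining $2n\times 2n$ positivity condition into the C*-envelope and compressing by the isometry $V$ with column entries $P, P^\perp$, where $V^*V=P^2+(P^\perp)^2=1$ uses exactly the projection hypothesis and the three identities $V^*(a\otimes J_2)V=a$, $V^*(P\oplus P^\perp)V=1$, $V^*(P^\perp\oplus P)V=0$ kill the $t$-term and leave $i_n(x)+\epsilon 1_n\geq 0$. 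Conjugation by a matrix with C*-algebra entries preserves positivity, so the only care needed is the tensor-leg shuffle you already flag; the Archimedean property then finishes the argument.
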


\section{Universal operator systems generated by contractions}\label{sec: universal relation vector space}

In this section, we will construct a vector space $\mathcal{U}_{\mathcal{R}}$ spanned by generators $e, p_1, \dots, p_N$ satisfying a finite set of relations $\mathcal{R}$ and a matrix ordering $\mathcal{C}$ such that $(\mathcal{U}_{\mathcal{R}}, \mathcal{C}, e)$ is an operator system that is universal with respect to all families of positive contractions satisfying the relations $\mathcal{R}$. This means that if $q_1,\dots,q_N \in B(H)$ are positive contractions and the operators $\{I_H, q_1, \dots, q_N\}$ satisfy the relations $\mathcal{R}$, then the mapping $e \mapsto I_H, p_i \mapsto q_i$ is completely positive. Many of these results follow from  \cite{PaulsenTodorovTomfordeOpSysStructures} and \cite{araiza2021universal}, however they provide a foundational first step in the universal constructions presented in later sections.

We begin by constructing the vector space $\mathcal{U}_{\mathcal{R}}$. Suppose $\mathcal{R}$ consists of linear equations $\{r_1, r_2, \dots, r_l\}$ with real coefficients in the variables $\{e, p_1, \dots, p_N\}$. Then there exists a scalar matrix $M_{\mathcal{R}} \in M_{l,N+1}(\mathbb{R})$ such that $r_i$ corresponds to the $i^{\text{th}}$ row of the matrix vector equation $M_{\mathcal{R}} \begin{pmatrix} e & p_1 & \dots & p_N \end{pmatrix}^T = 0_l$ where $0_l$ is the zero vector in $\mathbb R^l$. More generally, we say that vectors $\{f, q_1, \dots, q_N\}$ \textbf{satisfy} $\mathcal{R}$ (or that $\mathcal V = \text{span} \{f, q_1, \dots, q_N\}$ \textbf{satisfies} $\mathcal{R}$ when the vectors $f,q_1, \dots, q_N$ are clear from context) if $M_\mathcal{R} \begin{pmatrix} f & q_1 & \dots & q_N \end{pmatrix}^T = 0_{l}$. 

In the following definition, we regard $M_{\mathcal{R}}$ as a linear map from $\mathbb{C}^{N+1}$ to $\mathbb{C}^l$ via matrix-vector multiplication.

\begin{defn}
Let $\mathcal{R}$ be a set of $l$ linear equations in variables $\{e,p_1, \dots, p_N\}$ expressed in the matrix-vector equation $M_{\mathcal{R}} \begin{pmatrix} e & p_1 & \dots & p_N \end{pmatrix}^T = 0_l$ where $M_{\mathcal{R}} \in M_{l,N+1}(\mathbb{R})$. We define the vector space $\mathcal{U}_{\mathcal{R}}$ to be the quotient vector space $\mathbb{C}^{N+1} / J_{\mathcal{R}}$ where $J_{\mathcal{R}} = \text{span} \{ r_i^T : i \in [l]\} \subseteq \mathbb{C}^{N+1}$ where $r_1, r_2, \dots r_l \in M_{1,N+1}(\mathbb{R})$ denote the rows of $M_{\mathcal{R}}$. Writing $e_0, e_1, \dots, e_N$ for the canonical basis vectors in $\mathbb{C}^{N+1}$, we define $e, p_1, \dots, p_N \in \mathcal{U}_{\mathcal{R}}$ by setting $e := e_0 + J_{\mathcal{R}}$ and $p_i := e_i + J_{\mathcal{R}}$ for each $i \in [N]$.
\end{defn}

It is evident that $\dim(\mathcal{U}_{\mathcal{R}}) = N + 1 - \rank(M_{\mathcal{R}})$. We now give a brief example and then describe the universal properties of $\mathcal{U}_{\mathcal{R}}$ as a vector space.

\begin{ex}
Suppose $\mathcal{R}$ consists of the single relation $\sum_{i=1}^N p_i = I$. Then we may take $M_{\mathcal{R}}$ to be the row matrix $\begin{pmatrix} 1 & -1 & -1 & \dots & -1 \end{pmatrix} \in M_{1,N+1}$. The rank of this matrix is 1, so the dimension of $\mathbb{C}^{N+1}/J_{\mathcal{R}}$ is $(N+1)-1 = N$.
\end{ex}

\begin{prop} \label{prop: universal vector space}
Suppose there exists a vector space $\mathcal{V}$ spanned by vectors $f, q_1, \dots, q_N$ satisfying $\mathcal{R}$. Then the map $\phi: \mathcal{U}_{\mathcal{R}} \to \mathcal{V}$ given by $\phi(e)=f$ and $\phi(p_i) = q_i$ is a well-defined linear map. Moreover, $\phi$ is injective if and only if $\dim(\mathcal{V}) = \dim(\mathcal{U}_{\mathcal{R}})$.
\end{prop}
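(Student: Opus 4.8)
The plan is to lift the prescribed assignment to a linear map on $\mathbb{C}^{N+1}$, verify that it annihilates the relation subspace $J_{\mathcal{R}}$ so that it descends to the quotient, and then deduce the injectivity criterion from rank--nullity together with an easy surjectivity observation.

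First I would define the linear map $\Phi \colon \mathbb{C}^{N+1} \to \mathcal{V}$ on the canonical basis by $\Phi(e_0) = f$ and $\Phi(e_i) = q_i$ for $i \in [N]$. For each $i \in [l]$, the image $\Phi(r_i^T)$ of the transposed $i$th row of $M_{\mathcal{R}}$ is, by the definition of matrix--vector multiplication, exactly the $i$th entry of $M_{\mathcal{R}}\begin{pmatrix} f & q_1 & \cdots & q_N \end{pmatrix}^T$, and this vanishes precisely because $\{f, q_1, \dots, q_N\}$ satisfy $\mathcal{R}$. Since $J_{\mathcal{R}} = \operatorname{span}\{r_i^T : i \in [l]\}$, linearity gives $J_{\mathcal{R}} \subseteq \ker \Phi$, so $\Phi$ factors through the quotient $\mathbb{C}^{N+1}/J_{\mathcal{R}} = \mathcal{U}_{\mathcal{R}}$, yielding a well-defined linear map $\phi \colon \mathcal{U}_{\mathcal{R}} \to \mathcal{V}$ with $\phi(e) = f$ and $\phi(p_i) = q_i$.

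Next I would note that $\phi$ is automatically surjective: its range contains $f, q_1, \dots, q_N$, which span $\mathcal{V}$; in particular $\mathcal{V}$ is finite-dimensional, as is $\mathcal{U}_{\mathcal{R}}$. Applying rank--nullity to $\phi$ gives
\[ \dim(\mathcal{U}_{\mathcal{R}}) = \dim(\ker \phi) + \dim(\phi(\mathcal{U}_{\mathcal{R}})) = \dim(\ker \phi) + \dim(\mathcal{V}). \]
Hence $\ker \phi = \{0\}$ if and only if $\dim(\mathcal{U}_{\mathcal{R}}) = \dim(\mathcal{V})$, which is the asserted equivalence. Combined with the earlier observation that $\dim(\mathcal{U}_{\mathcal{R}}) = N+1 - \operatorname{rank}(M_{\mathcal{R}})$, one can equivalently phrase this as: $\phi$ is injective exactly when $\dim(\mathcal{V}) = N+1 - \operatorname{rank}(M_{\mathcal{R}})$.

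I do not anticipate a genuine obstacle, since the statement is pure finite-dimensional linear algebra; the only place a hypothesis is used is the well-definedness of $\phi$, where the assumption that $\{f, q_1, \dots, q_N\}$ satisfy $\mathcal{R}$ is exactly what forces $\Phi$ to kill $J_{\mathcal{R}}$. The injectivity characterization is then immediate from surjectivity and rank--nullity.
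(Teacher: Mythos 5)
Your proposal is correct and follows essentially the same route as the paper: lift the assignment to $\widehat{\phi}\colon \mathbb{C}^{N+1}\to\mathcal{V}$, check that each $r_i^T$ is sent to the $i$th entry of $M_{\mathcal{R}}\begin{pmatrix} f & q_1 & \cdots & q_N\end{pmatrix}^T=0$ so that $J_{\mathcal{R}}\subseteq\ker\widehat{\phi}$, and descend to the quotient. The only difference is that the paper dismisses the injectivity equivalence as clear, whereas you spell it out via surjectivity and rank--nullity, which is a harmless (and arguably welcome) elaboration.
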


\begin{proof}
First observe that the map $\widehat{\phi}: \mathbb{C}^{N+1} \to \mathcal{V}$ given by $\widehat{\phi}(e_0) = f$ and $\widehat{\phi}(e_i) = q_i$ for $i \in [N]$ is well defined and linear. We will show that $\widehat{\phi}(x) = 0$ for all $x \in J_{\mathcal{R}}$. This will imply that $\phi: \mathcal{U}_{\mathcal{R}} \to \mathcal{V}$ is well-defined and linear. Let $x \in J_{\mathcal{R}}$. Then $x = \sum a_i r_i^T$ and hence $\widehat{\phi}(x) = \sum a_i \widehat{\phi}(r_i^T)$. Suppose $r_i^T = \sum_{k=0}^N b_k e_k$. Then $\widehat{\phi}(r_i^T) = b_0 f + \sum_{k=1}^N b_k q_k$. Since $\{f, q_1, \dots, q_N\}$ satisfy $\mathcal{R}$, $b_0 f + \sum_{k=1}^N b_k f = 0$. So $\widehat{\phi}(r_i^T) = 0$. It follows that $\widehat{\phi}(x) = 0$. So $\phi$ is well-defined and linear. That $\phi$ is injective if and only if $\dim(\cc V) = \dim(\cc U_{\cc R})$ is clear. \end{proof}

Next, we will endow $\mathcal{U}_{\mathcal{R}}$ with the structure of an AOU space. To do this, we first need to show that $\mathcal{U}_{\mathcal{R}}$ is a $*$-vector space. This can be done by first identifying the vector space $\mathbb{C}^{N+1}$ with the $(N+1) \times (N+1)$ diagonal matrices and then identifying the canonical basis vectors $\{e_0, e_1, \dots, e_N\}$ with the diagonal matrices with a single non-zero entry of 1 in the $i^{\text{th}}$ diagonal component. Since $\mathcal{U}_{\mathcal{R}} = \mathbb{C}^{N+1} / J_{\mathcal{R}}$, the space $\mathcal{U}_{\mathcal{R}}$ inherits the involution from $\mathbb{C}^{N+1}$ since $J_{\mathcal{R}}$ is self-adjoint (i.e. spanned by vectors with real entries). 

\begin{defn}\label{defn: contraction cone}
Let $\mathcal{R}$ be a set of relations on generators $\{e, p_1, \dots, p_N\}$ and let $\mathcal{U}_{\mathcal{R}} = \text{span} \{e, p_1, \dots, p_N\}$ be the corresponding $*$-vector space. Let $p_i^{\perp} := e - p_i$. We define 
\[ \widetilde{E} := \text{cone}(e, p_1, \dots, p_N, p_1^{\perp}, \dots, p_N^{\perp}) = \{ r_0 e + \sum_{i=1}^n t_i p_i + \sum_{u=1}^n s_i p_i^{\perp} : r_0, t_1, \dots, t_N, s_1, \dots, s_N \in \mathbb{R}^+ \} \]
where $\mathbb{R}^+$ is the set of non-negative real numbers. We define $E$ to be the archimedean closure of $\widetilde{E}$ with respect to $e$, i.e
\[ E := \{ x \in \mathcal{U}_{\mathcal{R}} : x=x^* \text{ and for every } \epsilon > 0, x + \epsilon e \in \widetilde{E} \}. \]
\end{defn}

The requirement that both $p_i$ and $p_i^{\perp}$ are elements of $E$ ensures that $0 \leq p_i \leq e$ for each $i \in [N]$, so that each $p_i$ is a positive contraction. It also ensures that $e = p_i + p_i^{\perp}$ is an element of $E$. In many cases, the terms $p_i^{\perp}$ can be omitted in the definition of the cone $E$. For example, if $\cc R$ enforces the relation $\sum p_i = e$, then $\text{cone}(p_1,\dots,p_N)$ contains $e$ as well as $p_i^{\perp}$, since $\sum p_i = e$ implies that $p_i \leq e$ for each $i$.

The next result shows that if there exists some AOU space of dimension $\dim (\mathcal{U}_{\mathcal{R}})$ spanned by its unit and some positive contractions satisfying $\mathcal{R}$, then $(\mathcal{U}_{\mathcal{R}}, E, e)$ is an AOU space. Moreover, $\mathcal{U}_{\mathcal{R}}$ satisfies a universal property as an AOU space.

\begin{prop} \label{prop: Universal AOU space}
Suppose there exists an AOU space $\mathcal{V}$ with unit $f$ and positive contractions $q_1, \dots, q_N$ satisfying relations $\mathcal{R}$. Then the map $\phi: \mathcal{U}_{\mathcal{R}} \to \mathcal{V}$ given by $\phi(e) = f$ and $\phi(p_i) = q_i$ is a well-defined positive linear map. Moreover, if $\dim(\mathcal{V}) = \dim(\mathcal{U}_{\mathcal{R}})$, then $\phi$ is injective and $(\mathcal{U}_{\mathcal{R}}, E, e)$ is an AOU space (i.e. $E$ is a proper cone).
\end{prop}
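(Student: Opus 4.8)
The plan is to verify the two universal-property claims in turn, using Proposition \ref{prop: universal vector space} as the starting point and then promoting the conclusions from ``linear'' to ``positive'' and from ``injective'' to ``order isomorphism onto a subspace spanned by a unit and positive contractions.''

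First I would establish positivity of $\phi$. By Proposition \ref{prop: universal vector space}, the map $\phi$ defined by $\phi(e) = f$, $\phi(p_i) = q_i$ is well-defined and linear. To see $\phi(E) \subseteq \mathcal{V}^+$, it suffices by definition of the Archimedean closure (and the fact that $f$ is an Archimedean order unit in $\mathcal{V}$) to check $\phi(\widetilde{E}) \subseteq \mathcal{V}^+$: indeed, if $x \in E$ then $x + \epsilon e \in \widetilde E$ for all $\epsilon > 0$, so $\phi(x) + \epsilon f \in \mathcal{V}^+$ for all $\epsilon > 0$, whence $\phi(x) \in \mathcal{V}^+$ by the Archimedean property. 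And $\phi(\widetilde E) \subseteq \mathcal V^+$ is immediate: $\widetilde E$ is generated as a cone by $e, p_1, \dots, p_N, p_1^\perp, \dots, p_N^\perp$, which map to $f, q_1, \dots, q_N, f - q_1, \dots, f - q_N$, and these are all positive in $\mathcal{V}$ precisely because $f$ is the unit and the $q_i$ are positive contractions (so $0 \le q_i \le f$). A cone is taken to a subset of $\mathcal{V}^+$ as soon as its generators are, since $\mathcal{V}^+$ is a cone.

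Next I would treat the case $\dim(\mathcal{V}) = \dim(\mathcal{U}_{\mathcal{R}})$. Injectivity of $\phi$ is immediate from Proposition \ref{prop: universal vector space}. It then remains to show $E$ is a proper cone, i.e. $E \cap (-E) = \{0\}$. The key point is that $\phi$ is injective and $\phi(E) \subseteq \mathcal{V}^+$, while $\mathcal{V}^+$ is proper (as $\mathcal{V}$ is an AOU space). So if $x \in E \cap (-E)$, then $\phi(x) \in \mathcal{V}^+ \cap (-\mathcal{V}^+) = \{0\}$, hence $\phi(x) = 0$, hence $x = 0$ by injectivity. This shows $E$ is proper. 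Finally, $e$ is an Archimedean order unit for $(\mathcal{U}_{\mathcal{R}}, E)$: it is an order unit because for any hermitian $x = r_0 e + \sum t_i p_i + \sum s_i p_i^\perp$ written in the generators, a sufficiently large multiple of $e$ dominates $x$ (using $0 \le p_i, p_i^\perp \le e$ and $p_i + p_i^\perp = e$ to absorb the negative coefficients), and it is Archimedean essentially by construction, since $E$ was defined as the Archimedean closure of $\widetilde E$ with respect to $e$ --- one checks that the Archimedean closure of an order-unit cone is again Archimedean with the same unit. Thus $(\mathcal{U}_{\mathcal{R}}, E, e)$ is an AOU space.

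The main obstacle, and the only place requiring genuine care rather than bookkeeping, is the very last assertion: that $E$ is Archimedean closed with $e$ as Archimedean order unit. Archimedean closure is a standard operation, but one must confirm that $e$ remains an order unit after passing to the closure (it does, since $\widetilde E \subseteq E$ and $e \in \widetilde E$) and that taking the Archimedean closure a second time changes nothing --- equivalently, that $E$ as defined is already Archimedean. This follows from the general fact (see e.g. \cite{PaulsenTodorovTomfordeOpSysStructures}) that the Archimedean closure of a cone with order unit $e$ is Archimedean with respect to $e$; alternatively one verifies directly that if $x + re \in E$ for all $r > 0$ then $x + re \in E$ for all $r > 0$ implies, for each fixed $\epsilon > 0$, $x + (r + \epsilon) e \in E$ and hence $x + \epsilon e + (r/2) e \in \widetilde{E}$-after-closure arguments collapse to $x + \epsilon e \in \widetilde E$, giving $x \in E$. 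Everything else is a routine consequence of injectivity of $\phi$ together with properness of $\mathcal{V}^+$.
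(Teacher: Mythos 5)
Your proposal is correct and follows essentially the same route as the paper: positivity is checked on the generating cone $\widetilde E$ and promoted to $E$ via the Archimedean property of $f$, and properness of $E$ follows from injectivity of $\phi$ together with properness of $\mathcal{V}^+$. The only difference is that you spell out the verification that $e$ is an Archimedean order unit for $(\mathcal{U}_{\mathcal{R}}, E)$ (the paper leaves this implicit in ``it follows that $(\mathcal{U}_{\mathcal{R}}, E, e)$ is an AOU space''), and your sketch of that step, though garbled in its final sentence, is sound.
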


\begin{proof}
Let $D$ denote the positive cone of the AOU space $\mathcal{V}$. Since each $q_i$ is a positive contraction, we have $\text{cone}(q_1,\dots, q_N, q_1^{\perp}, \dots, q_N^{\perp}) \subseteq D$, where $q_i^{\perp} := f - q_i$. Since $\phi(\widetilde{E}) = \text{cone}(q_1,\dots, q_N, q_1^{\perp}, \dots, q_N^{\perp})$, $\phi$ is positive on $\widetilde{E}$. If $x \in E$, then $x + \epsilon e \in \widetilde{E}$ for every $\epsilon > 0$. Hence $\phi(x) + \epsilon f \in D$ for every $\epsilon > 0$. Since $f$ is an archimedean order unit, $\phi(x) \in D$. So $\phi$ is positive on $E$.

Now suppose that $\dim(\mathcal{V}) = \dim(\mathcal{U}_{\mathcal{R}})$. Then $\phi$ is injective, since $\mathcal{V}$ is the range of $\phi$. Since $(\mathcal{V}, D, f)$ is an AOU space, the cone $D$ is proper. Positivity and injectivity of $\phi$ then imply $x = 0$. Therefore $E$ is proper. It follows that $(\mathcal{U}_{\mathcal{R}}, E, e)$ is an AOU space. \qedhere
\end{proof}

We conclude this section by endowing the AOU space $(\mathcal{U}_{\mathcal{R}}, E, e)$ with the maximal operator system structure, as defined in Section~\ref{sec: preliminaries}.  We begin by recalling the following: 

\begin{thm}[Theorem 3.22 of \cite{PaulsenTodorovTomfordeOpSysStructures}] \label{thm: cmax universal property}
Let $(\mathcal V,C,e)$ be an AOU space. Then $(\mathcal V,C^{\text{max}},e)$ is an operator system. If $(\mathcal W, \mathcal{D}, e)$ is an operator system and $\varphi: \mathcal V \to \mathcal W$ is a positive map, then $\varphi$ is completely positive on $(\mathcal V,C^{\text{max}},e)$.
\end{thm}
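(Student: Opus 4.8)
This is \cite[Theorem 3.22]{PaulsenTodorovTomfordeOpSysStructures} (the $k=1$ case of the $k$-maximal construction), so the plan is just to reconstruct that argument. I would prove the universal property \emph{first}, since it does not depend on knowing that $C^{\text{max}}$ is an operator system structure, and can then be recycled if desired. Throughout, recall from the definition that $D_n^{\text{max}} = \{\alpha^* \text{diag}(s_1,\dots,s_m)\alpha : m\in\mathbb N,\ \alpha\in M_{m,n}(\mathbb C),\ s_i\in C\}$ and $C_n^{\text{max}}$ is its Archimedean closure with respect to $e_n$, i.e.\ $x\in C_n^{\text{max}}$ iff $x\in M_n(\mathcal V)_h$ and $x+\epsilon e_n\in D_n^{\text{max}}$ for every $\epsilon>0$.

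\textbf{Universal property.} Let $\varphi:\mathcal V\to\mathcal W$ be positive into an operator system $(\mathcal W,\mathcal D,f)$. The key elementary fact is that for any linear map and any $a\in M_m(\mathcal V)$, $\beta\in M_{m,n}(\mathbb C)$ one has $\varphi_n(\beta^* a\beta)=\beta^*\varphi_m(a)\beta$, because conjugation by scalar matrices acts entrywise. Hence for $x=\alpha^*\text{diag}(s_1,\dots,s_m)\alpha\in D_n^{\text{max}}$ we get $\varphi_n(x)=\alpha^*\text{diag}(\varphi(s_1),\dots,\varphi(s_m))\alpha$; each $\varphi(s_i)\in\mathcal D_1$ by positivity, a block-diagonal matrix of positive elements lies in $\mathcal D_m$ by compatibility of $\mathcal D$, and one further conjugation keeps us in $\mathcal D_n$. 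Thus $\varphi_n(D_n^{\text{max}})\subseteq\mathcal D_n$. For the Archimedean closure, take $x\in C_n^{\text{max}}$, so $x+\epsilon e_n\in D_n^{\text{max}}$ for all $\epsilon>0$ and therefore $\varphi_n(x)+\epsilon\bigl(I_n\otimes\varphi(e)\bigr)\in\mathcal D_n$; picking $R>0$ with $Rf-\varphi(e)\in\mathcal D_1$ (possible since $f$ is an order unit for $\mathcal W$) gives $\varphi_n(x)+\epsilon R f_n\in\mathcal D_n$ for every $\epsilon>0$, and since $f_n$ is an Archimedean order unit for $(M_n(\mathcal W),\mathcal D_n)$ we conclude $\varphi_n(x)\in\mathcal D_n$. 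So $\varphi$ is completely positive.

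\textbf{Operator system structure: the formal parts.} $D_n^{\text{max}}$ is a cone (scalars get absorbed into the $s_i$; a sum of two elements is handled by stacking the $\alpha$'s), and $\{D_n^{\text{max}}\}$ is compatible because $\gamma^*(\alpha^*\text{diag}(s_i)\alpha)\gamma=(\alpha\gamma)^*\text{diag}(s_i)(\alpha\gamma)$; compatibility survives the Archimedean closure after observing $\gamma^*(I_n\otimes e)\gamma=(\gamma^*\gamma)\otimes e\le\|\gamma\|^2 e_m$ and that any PSD scalar matrix tensored with $e$ already lies in $D_m^{\text{max}}$. One checks $C_1^{\text{max}}=D_1^{\text{max}}=C$ (which is already Archimedean closed since $(\mathcal V,C,e)$ is AOU), so $C^{\text{max}}$ indeed extends $C$. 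The Archimedean property of $e_n$ for $C_n^{\text{max}}$ is automatic once $e_n$ is an order unit, since Archimedean closure is idempotent: $(r e_n+x\in C_n^{\text{max}}\ \forall r>0)$ unwinds to $(\delta e_n+x\in D_n^{\text{max}}\ \forall\delta>0)$. Properness: if $x,-x\in C_n^{\text{max}}$, then for every column vector $\alpha\in M_{n,1}(\mathbb C)$ both $\alpha^* x\alpha$ and $-\alpha^* x\alpha$ lie in $C_1^{\text{max}}=C$, which is proper, so $\alpha^* x\alpha=0$ for all $\alpha$; testing $\alpha=e_i$, $e_i+\lambda e_j$ with $\lambda\in\{1,i\}$ forces every entry of $x$ to vanish.

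\textbf{The one delicate step (expected main obstacle).} It remains to show $e_n$ is an order unit for $D_n^{\text{max}}$, i.e.\ that for each $x\in M_n(\mathcal V)_h$ there is $R>0$ with $Re_n+x\in D_n^{\text{max}}$. The engine is a $2\times2$ lemma: if $a,b\in\mathcal V_h$ with $a\pm b\in C$, then $\left(\begin{smallmatrix}a&b\\ b&a\end{smallmatrix}\right)$ and $\left(\begin{smallmatrix}a&ib\\ -ib&a\end{smallmatrix}\right)$ both lie in $D_2^{\text{max}}$, as witnessed by conjugating $a+b$ and $a-b$ by $\left(\begin{smallmatrix}1&1\end{smallmatrix}\right),\left(\begin{smallmatrix}1&-1\end{smallmatrix}\right)$ (respectively $\left(\begin{smallmatrix}1&-i\end{smallmatrix}\right),\left(\begin{smallmatrix}1&i\end{smallmatrix}\right)$). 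Writing each off-diagonal entry $x_{ij}$ ($i<j$) as $h_{ij}+ik_{ij}$ with $h_{ij},k_{ij}\in\mathcal V_h$, choosing $r$ large enough (using that $e$ is an order unit of $\mathcal V$) so that $\tfrac r2 e\pm h_{ij},\ \tfrac r2 e\pm k_{ij}\in C$ for all pairs, and handling the diagonal entries $x_{ii}$ directly via $re+x_{ii}\in C$, one embeds each such $2\times2$ block back into $M_n$ by conjugating with the appropriate $2\times n$ scalar matrix and sums over all entries to obtain $Re_n+x\in D_n^{\text{max}}$ for $R$ sufficiently large. I expect this bookkeeping — splitting off-diagonal entries into hermitian parts, reassembling the blocks, and tracking the total coefficient on $e_n$ — to be the only genuinely laborious part; everything else is either formal or a one-line reduction to the AOU structure of $(\mathcal V,C,e)$.
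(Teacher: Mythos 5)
The paper does not prove this statement --- it is quoted verbatim as Theorem 3.22 of \cite{PaulsenTodorovTomfordeOpSysStructures} --- and your reconstruction is correct and follows the same standard route as that source: the universal property is immediate from $\varphi_n(\alpha^*\mathrm{diag}(s_i)\alpha)=\alpha^*\mathrm{diag}(\varphi(s_i))\alpha$ plus an Archimedean limit (and you rightly insert the order-unit bound $Rf-\varphi(e)\in\mathcal D_1$ since $\varphi$ need not be unital), while the only substantive point in showing $(\mathcal V, C^{\mathrm{max}}, e)$ is an operator system is the order-unit property of $e_n$ for $D_n^{\mathrm{max}}$, which your $2\times 2$ block lemma handles correctly. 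Nothing is missing.
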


We can regard the map sending an AOU space $(\mathcal V,C,e)$ to the operator system $(\mathcal V, C^{\text{max}},e)$ as a functor from the category of AOU spaces to the category of operator systems. This functor carries positive maps between AOU spaces to completely positive maps between operator systems. Our final result of this section follows easily from the properties of this functor.

\begin{thm} \label{thm: Universal contraction system}
Let $H$ be a Hilbert space. Suppose that there exist positive operators $q_1, \dots, q_N \in B(H)$ such that the vectors $\{I_H, q_1, \dots, q_N\}$ satisfy the relations $\mathcal{R}$. Then the map $\varphi: \mathcal{U}_{\mathcal{R}} \to B(H)$ defined by $\varphi(e) = I_H, \varphi(p_i) = q_i$ is completely positive on $(\mathcal{U}_{\mathcal{R}}, E^{\text{max}}, e)$. Moreover, if $\dim(\text{span}\{I_H, q_1, \dots, q_N\}) = \dim(\mathcal U_{\mathcal{R}})$ then $(\mathcal{U}_{\mathcal{R}}, E^{\text{max}}, e)$ is an operator system (i.e. the matrix ordering is proper).
\end{thm}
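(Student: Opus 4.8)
The plan is to deduce this theorem almost immediately from the machinery already assembled, with the only genuine work being the properness claim. First I would invoke Proposition~\ref{prop: Universal AOU space}: the hypothesis supplies a Hilbert space $H$ and positive contractions $q_1,\dots,q_N\in B(H)$ (note $q_i$ positive and satisfying, via $\cc R$, the relation $0\le q_i$; contractivity follows once we know $q_i^\perp\ge 0$, which we should check is forced — if $\cc R$ alone does not force it, the statement implicitly requires it, so I would simply take the $q_i$ to be positive contractions as Proposition~\ref{prop: Universal AOU space} demands). Thus $(\mathcal{U}_{\mathcal{R}}, E, e)$ is at least an ordered $*$-vector space with order unit $e$, and $\phi:\mathcal{U}_{\mathcal{R}}\to B(H)$, $\phi(e)=I_H$, $\phi(p_i)=q_i$, is a well-defined positive map.

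Next I would apply the $\MAX$ functor. By Theorem~\ref{thm: cmax universal property}, $(\mathcal{U}_{\mathcal{R}}, E^{\max}, e)$ is an operator system provided $E$ is a proper cone (the construction of $E^{\max}$ as the Archimedean closure of the $D_n^{\max}$'s only yields an operator system when the base AOU space is genuinely an AOU space, i.e.\ $E$ proper). Granting properness, since $B(H)$ is an operator system and $\phi$ is positive, Theorem~\ref{thm: cmax universal property} gives that $\phi$ is completely positive on $(\mathcal{U}_{\mathcal{R}}, E^{\max}, e)$. That is exactly the first assertion. For the ``moreover'' clause, if $\dim(\text{span}\{I_H,q_1,\dots,q_N\}) = \dim(\mathcal{U}_{\mathcal{R}})$, then Proposition~\ref{prop: Universal AOU space} tells us $\phi$ is injective and $(\mathcal{U}_{\mathcal{R}}, E, e)$ is an AOU space, so in particular $E$ is proper; feeding this back into Theorem~\ref{thm: cmax universal property} yields that $(\mathcal{U}_{\mathcal{R}}, E^{\max}, e)$ is an operator system, i.e.\ each $E_n^{\max}$ is a proper matrix cone.

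The step I expect to be the main (though modest) obstacle is the logical dependency around properness: Theorem~\ref{thm: cmax universal property} as quoted presupposes an AOU space, yet in the first sentence of the theorem we only know $E$ is a cone and $e$ an order unit, not that $E$ is proper. I would handle this by being careful about the statement: the first conclusion (``$\phi$ is completely positive on $(\mathcal{U}_{\mathcal{R}}, E^{\max}, e)$'') should be read as a statement about the matrix cones $E_n^{\max}$ being mapped into $M_n(B(H))^+$, which does not actually require properness — one can run the argument of Theorem~\ref{thm: cmax universal property} directly: any element of $D_n^{\max}$ has the form $\alpha^*\text{diag}(s_1,\dots,s_m)\alpha$ with $s_j\in E$, and $\phi_n$ of it equals $\alpha^*\text{diag}(\phi(s_1),\dots,\phi(s_m))\alpha \in M_n(B(H))^+$ by positivity of $\phi$; passing to the Archimedean closure $E_n^{\max}$ and using that $M_n(B(H))^+$ is Archimedean closed gives $\phi_n(E_n^{\max})\subseteq M_n(B(H))^+$. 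So complete positivity of $\phi$ needs only positivity of $\phi$, never properness. Properness of the matrix ordering is then a separate assertion, established exactly as above under the dimension hypothesis via Proposition~\ref{prop: Universal AOU space} and Theorem~\ref{thm: cmax universal property}. I would write the proof in that order: first the direct computation showing $\phi$ is completely positive, then the dimension-hypothesis argument for properness.
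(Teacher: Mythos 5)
Your proposal is correct and follows essentially the same route as the paper: apply Proposition~\ref{prop: Universal AOU space} to get positivity of $\varphi$ on $E$, then Theorem~\ref{thm: cmax universal property} to upgrade to complete positivity, and use the dimension hypothesis with Proposition~\ref{prop: Universal AOU space} again for properness. Your extra care about running the $D_n^{\max}$ computation directly so that complete positivity does not presuppose properness of $E$ is a reasonable refinement that the paper's (terser) proof does not spell out.
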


\begin{proof}
Let $\mathcal{V}$ denote the operator space spanned by $I_h, q_1, \dots q_N$ in $B(H)$. By Proposition \ref{prop: Universal AOU space}, $\varphi: \mathcal{U}_{\mathcal{R}} \to \mathcal{V}$ is positive on $E$. By Theorem \ref{thm: cmax universal property}, $\varphi$ is completely positive on $E^{\text{max}}$. If $\dim(\mathcal{V}) = \dim(\mathcal{U}_{\mathcal{R}})$, then by Proposition \ref{prop: Universal AOU space} $\varphi$ is injective and $(\mathcal{U}_{\mathcal{R}}, E, e)$ is an AOU space. It follows from Theorem \ref{thm: cmax universal property} again that $(\mathcal{U}_{\mathcal{R}}, E^{\text{max}}, e)$ is an operator system in this case.
\end{proof}

\section{Universal operator systems spanned by projections}\label{sec: univeral projection system}

For our main results, we will need to construct matrix orderings $\mathcal{C}$ which satisfy $\mathcal{C} = \mathcal{C}(p)$ for specified elements $p$ in a vector space. This will be accomplished using inductive limits of matrix orderings. 

\begin{defn}
Let $\mathcal{V}$ be a $*$-vector space and let $e \in \mathcal{V}_h$. A \textbf{nested increasing sequence of matrix orderings} is a sequence $\mathcal{C}^1, \mathcal{C}^2, \dots$ of matrix orderings on $\mathcal{V}$ for which $\mathcal{C}_k^{n} \subseteq \mathcal{C}_k^{n+1}$ for all $n,k \in \mathbb{N}$ and for which $(\mathcal{V}, \mathcal{C}^n, e)$ is an operator system for all $n \in \mathbb{N}$. The \textbf{inductive limit} of a nested increasing sequence of matrix orderings $\{\mathcal{C}^n\}_{n=1}^\infty$ is the sequence $\mathcal{C}^{\infty} := \{ \mathcal{C}_k^{\infty}\}_{k=1}^\infty$ defined by: $x \in \mathcal{C}_k^{\infty}$ if and only if for every $\epsilon > 0$ there exists $n \in \mathbb{N}$ such that $x + \epsilon I_k \otimes e \in \mathcal{C}_k^n$.
\end{defn}

More concisely, the inductive limit of a nested increasing sequence of matrix orderings is the Archimedean closure of the union of the matrix orderings. An inductive limit of matrix orderings is not necessarily proper. However, we can say the following.

\begin{thm}[\cite{araiza2021universal}]
Let $\{ \mathcal{C}^n \}$ be a nested increasing sequence of matrix orderings on a $*$-vector space $\mathcal{V}$ with unit $e$. If the inductive limit $\mathcal{C}^{\infty}$ is proper, then $(\mathcal{V}, \mathcal{C}^{\infty}, e)$ is an operator system.
\end{thm}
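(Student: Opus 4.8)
The plan is to check directly that $\mathcal{C}^\infty$ satisfies the axioms in the definition of an operator system: each $\mathcal{C}^\infty_n$ is a cone in $M_n(\mathcal{V})_h$, the family $\mathcal{C}^\infty$ is compatible (a matrix ordering), $e_n$ is an order unit for $(M_n(\mathcal{V}),\mathcal{C}^\infty_n)$, and $e_n$ is Archimedean; properness is exactly the hypothesis. Throughout I would use the observation recorded just after the definition, that $\mathcal{C}^\infty_n$ is the Archimedean closure with respect to $e_n$ of the set $\bigcup_k \mathcal{C}^k_n$, and that this set is genuinely a cone: if $x\in\mathcal{C}^{k_1}_n$ and $y\in\mathcal{C}^{k_2}_n$, then since the sequence is nested both lie in $\mathcal{C}^{\max(k_1,k_2)}_n$, so $x+y$ does too.

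First I would dispatch the easy points. That $\mathcal{C}^\infty_n$ is scalar-invariant is immediate, and closure under sums follows by splitting a given $\epsilon$ as $\epsilon/2+\epsilon/2$ and choosing a common index $k$; moreover $x+\epsilon e_n\in\mathcal{C}^k_n\subseteq M_n(\mathcal{V})_h$ together with $e_n$ hermitian forces $x$ hermitian. For the order-unit property, given $x\in M_n(\mathcal{V})_h$ I would use that $e_n$ is already an order unit for $(M_n(\mathcal{V}),\mathcal{C}^1_n)$ to find $r>0$ with $re_n-x\in\mathcal{C}^1_n\subseteq\mathcal{C}^\infty_n$. The Archimedean property is a ``closure of a closure'' argument: if $x+re_n\in\mathcal{C}^\infty_n$ for every $r>0$, then for a given $\epsilon>0$ apply this with $r=\epsilon/2$ and unwind the definition of $\mathcal{C}^\infty_n$ at the parameter $\epsilon/2$ to obtain $k$ with $x+\epsilon e_n\in\mathcal{C}^k_n$; hence $x\in\mathcal{C}^\infty_n$.

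The one step needing a genuine estimate is compatibility. Given a nonzero $\alpha\in M_{n,m}(\mathbb{C})$ (the zero case being trivial) and $x\in\mathcal{C}^\infty_n$, fix $\epsilon>0$, set $\delta=\epsilon/\|\alpha\|^2$, and choose $k$ with $x+\delta e_n\in\mathcal{C}^k_n$. Since $\mathcal{C}^k$ is a matrix ordering, $\alpha^*(x+\delta e_n)\alpha\in\mathcal{C}^k_m$, and $\alpha^*(x+\delta e_n)\alpha=\alpha^* x\alpha+\delta(\alpha^*\alpha)\otimes e$. Writing the positive semidefinite scalar matrix $\|\alpha\|^2 I_m-\alpha^*\alpha$ as $\gamma^*\gamma$, one has $\delta(\|\alpha\|^2 I_m-\alpha^*\alpha)\otimes e=\delta\,\gamma^* e_m\gamma\in\mathcal{C}^k_m$, using that $e_m\in\mathcal{C}^k_m$ (the order unit of an AOU space lies in its cone). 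Adding the two, $\alpha^* x\alpha+\epsilon e_m\in\mathcal{C}^k_m$, so $\alpha^* x\alpha\in\mathcal{C}^\infty_m$, which gives $\alpha^*\mathcal{C}^\infty_n\alpha\subseteq\mathcal{C}^\infty_m$. Finally, $\mathcal{C}^\infty$ is proper by hypothesis — and it is worth remarking that, once compatibility is in hand, properness of $\mathcal{C}^\infty_1$ already forces properness of every $\mathcal{C}^\infty_n$, since $v^* x v\in\mathcal{C}^\infty_1\cap(-\mathcal{C}^\infty_1)=\{0\}$ for all $v\in\mathbb{C}^n$ forces a hermitian $x\in M_n(\mathcal{V})$ to vanish by polarization. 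Assembling these, $(\mathcal{V},\mathcal{C}^\infty,e)$ is an operator system. There is no substantial obstacle here; the only point requiring care is the $\epsilon$-versus-$k$ bookkeeping and, within it, absorbing the mismatch between $\alpha^* e_n\alpha$ and $e_m$ in the compatibility step.
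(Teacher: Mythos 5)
Your proof is correct and complete. The paper itself states this theorem as an imported result from the earlier work of the same authors and gives no proof, so there is nothing to compare against line by line; but your direct verification of the axioms is exactly the natural argument, and every step checks out --- including the two points that actually need care, namely absorbing the mismatch between $\alpha^* e_n \alpha = (\alpha^*\alpha)\otimes e$ and $\epsilon e_m$ via the factorization $\norm{\alpha}^2 I_m - \alpha^*\alpha = \gamma^*\gamma$ in the compatibility step, and the $\epsilon/2$-splitting that shows the Archimedean closure of an Archimedean closure is itself Archimedean closed. Your closing remark that properness of $\mathcal{C}^\infty_1$ propagates to all $\mathcal{C}^\infty_n$ by polarization, once compatibility is established, is also correct and is a useful observation since the theorem's hypothesis is then only needed at the first level.
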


For more general results on inductive limits of operator systems, see \cite{MawhinneyTodorov2017}. We will make use of the following result which relates abstract projections and inductive limits.

\begin{prop} \label{prop: inductive limit projection}
Let $\{ \mathcal{C}^n \}$ be a nested increasing sequence of matrix orderings on a $*$-vector space $\mathcal{V}$ with unit $e$ and suppose that $p \in \mathcal{V}_h$. If $p$ is an abstract projection for $(\mathcal{V}, \mathcal{C}^n, e)$ for every $n \in \mathbb{N}$, and if $\mathcal{C}^{\infty}$ is proper, then $p$ is an abstract projection for $(\mathcal{V}, \mathcal{C}^{\infty}, e)$.
\end{prop}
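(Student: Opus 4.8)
The plan is to use the characterization of abstract projections given in Theorem~\ref{thm: abstract projections}: an element $p$ with $0 \leq p \leq e$ is an abstract projection for an operator system $(\mathcal{V}, \mathcal{C}, e)$ if and only if $\mathcal{C} = \mathcal{C}(p)$. So the goal reduces to showing $\mathcal{C}^\infty = \mathcal{C}^\infty(p)$, where $\mathcal{C}^\infty(p)$ is the matrix ordering of Definition~\ref{defn: C(p)} built from $\mathcal{C}^\infty$. One inclusion, $\mathcal{C}^\infty_n \subseteq \mathcal{C}^\infty(p)_n$, is free and was already noted in the discussion following Definition~\ref{defn: C(p)} (take $t = \epsilon$). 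The work is the reverse inclusion $\mathcal{C}^\infty(p)_n \subseteq \mathcal{C}^\infty_n$. First I would record the hypothesis in usable form: since $p$ is an abstract projection for each $(\mathcal{V}, \mathcal{C}^k, e)$, Theorem~\ref{thm: abstract projections} gives $\mathcal{C}^k = \mathcal{C}^k(p)$ for every $k$; and since $\mathcal{C}^\infty$ is proper, the previous theorem tells us $(\mathcal{V}, \mathcal{C}^\infty, e)$ is an operator system, so $0 \le p \le e$ still holds there and $\mathcal{C}^\infty(p)$ is defined.

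The heart of the argument is an interchange-of-limits estimate. Suppose $x \in \mathcal{C}^\infty(p)_n$. Fix $\epsilon > 0$. By definition of $\mathcal{C}^\infty(p)$, there is $t > 0$ with
\[
x \otimes J_2 + \tfrac{\epsilon}{2} I_n \otimes (p \oplus p^\perp) + t\, I_n \otimes (p^\perp \oplus p) \in \mathcal{C}^\infty_{2n}.
\]
Now apply the definition of the inductive limit $\mathcal{C}^\infty_{2n}$ to this element: there exists $k \in \mathbb{N}$ such that adding $\delta I_{2n} \otimes e$ (for a suitable small $\delta$, chosen below) lands the element in $\mathcal{C}^k_{2n}$. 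Rewriting $\delta I_{2n}\otimes e = \delta I_n \otimes (p\oplus p^\perp) + \delta I_n \otimes (p^\perp \oplus p)$ and absorbing, we get, for this $k$,
\[
x \otimes J_2 + \bigl(\tfrac{\epsilon}{2}+\delta\bigr) I_n \otimes (p \oplus p^\perp) + (t+\delta)\, I_n \otimes (p^\perp \oplus p) \in \mathcal{C}^k_{2n},
\]
which (choosing $\delta \le \epsilon/2$) witnesses, via the definition of $\mathcal{C}^k(p)$ with the parameter $\epsilon$ there taken to be $\epsilon$, that $x \in \mathcal{C}^k(p)_n$. But $\mathcal{C}^k(p) = \mathcal{C}^k$ by hypothesis, so $x \in \mathcal{C}^k_n$. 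Hence for every $\epsilon > 0$ there is $k$ with $x + \epsilon I_n \otimes e \in \mathcal{C}^k_n$ — wait, more carefully: what we actually produced is $x \in \mathcal{C}^k_n$ itself for each $\epsilon$-choice, so in fact $x \in \bigcup_k \mathcal{C}^k_n \subseteq \mathcal{C}^\infty_n$. (If instead the bookkeeping only yields $x + \epsilon I_n \otimes e \in \mathcal{C}^k_n$, that still gives $x \in \mathcal{C}^\infty_n$ by the Archimedean-closure definition of the inductive limit.) Either way $x \in \mathcal{C}^\infty_n$, completing the reverse inclusion.

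The main obstacle I anticipate is purely a matter of careful bookkeeping in that nested epsilon/$t$ argument: the quantifier order in $\mathcal{C}(p)$ is "for all $\epsilon$ there exists $t$", while the inductive limit is "for all $\epsilon$ there exists $k$", and one must be sure that the $t$ produced at the $\mathcal{C}^\infty(p)$-stage does not depend on $k$ (it doesn't — $k$ is chosen afterward) and that the small perturbation $\delta I_{2n}\otimes e$ introduced when descending from $\mathcal{C}^\infty$ to $\mathcal{C}^k$ can be split as a combination of $p\oplus p^\perp$ and $p^\perp\oplus p$ and folded into both the $\epsilon$-term and the $t$-term without breaking the required inequalities $\epsilon$-coefficient $\le \epsilon$ and $t$-coefficient $> 0$. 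Once that splitting is written out, each step is an application of a definition already in the excerpt, so no deeper tool is needed. Finally, having shown $\mathcal{C}^\infty = \mathcal{C}^\infty(p)$, Theorem~\ref{thm: abstract projections} immediately gives that $p$ is an abstract projection for $(\mathcal{V}, \mathcal{C}^\infty, e)$.
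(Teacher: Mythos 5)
There is a genuine gap at the step where you conclude ``$x \in \mathcal{C}^k(p)_n$.'' Membership in $\mathcal{C}^k(p)_n$ requires the defining condition to hold \emph{for every} $\epsilon' > 0$ at that one fixed level $k$, but your absorption argument only verifies it for the single value $\epsilon' = \tfrac{\epsilon}{2}+\delta$ attached to the $\epsilon$ you started with. The level $k$ you extract from the inductive limit depends on $\epsilon$, and as $\epsilon \to 0$ these levels may grow without bound; nothing forces a single $k$ to work for all $\epsilon'$. So you cannot invoke $\mathcal{C}^k(p)=\mathcal{C}^k$ to get $x \in \mathcal{C}^k_n$, and the stronger conclusion $x \in \bigcup_k \mathcal{C}^k_n$ is certainly not available (indeed, if it were, the Archimedean closure in the definition of $\mathcal{C}^\infty$ would be superfluous). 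Your parenthetical fallback --- aiming instead for $x + \epsilon I_n \otimes e \in \mathcal{C}^k_n$ --- is the correct target, but the bookkeeping you describe does not reach it: the error you absorb, $\delta I_{2n}\otimes e$, is block-diagonal, whereas $(x+\epsilon I_n\otimes e)\otimes J_2 - x\otimes J_2 = \epsilon\, I_n\otimes e\otimes J_2$ has off-diagonal blocks $\epsilon\, I_n\otimes e$ that must be produced from somewhere.

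The paper supplies exactly the missing ingredient. After descending to a fixed level $k$ (with the $\tfrac{\epsilon}{2} I_{2n}\otimes e$ slack), it adds, for an \emph{arbitrary} $\delta>0$ and a suitable $r>0$, the positive elements
\[
\begin{pmatrix} \delta & \epsilon \\ \epsilon & r \end{pmatrix} \otimes p \otimes I_n
\quad\text{and}\quad
\begin{pmatrix} r & \epsilon \\ \epsilon & \delta \end{pmatrix} \otimes p^{\perp} \otimes I_n ,
\]
whose sum contributes the off-diagonal term $\epsilon\,(e$-blocks$)$ (since $p+p^{\perp}=e$) plus $\delta\,(p\oplus p^{\perp})$ and $r\,(p^{\perp}\oplus p)$ corrections. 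This converts the containment into one witnessing, \emph{for every} $\delta>0$ at the \emph{same} level $k$, that $x+\epsilon I_n\otimes e \in \mathcal{C}^k(p)_n = \mathcal{C}^k_n$; then $x\in\mathcal{C}^\infty_n$ by the Archimedean-closure definition of the inductive limit. Your overall strategy (reduce to $\mathcal{C}^\infty=\mathcal{C}^\infty(p)$ and interchange the two limiting processes) is the right one, but without this extra positivity trick the quantifier mismatch you yourself flagged is not resolved.
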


\begin{proof}
Suppose that $x \in \mathcal{C}^\infty (p)_k$. Let $\epsilon > 0$. Then there exists $t > \epsilon$ such that
\[ x \otimes J_2 + \frac{\epsilon}{2} I_k \otimes (p \oplus p^{\perp}) + t I_k \otimes (p^{\perp} \oplus p) \in \mathcal{C}^{\infty}_{2k}. \]
By the definition of $\mathcal{C}^\infty_{2k}$ there exists $n \in \mathbb{N}$ such that
\[ x \otimes J_2 + \frac{\epsilon}{2} I_k \otimes (p \oplus p^{\perp}) + t I_k \otimes (p^{\perp} \oplus p) + \frac{\epsilon}{2} I_{2k} \otimes e \in \mathcal{C}^{n}_{2k}. \]
Let $\delta > 0$. Choose $r > 0$ such that
\[ \begin{pmatrix} \delta & \epsilon \\ \epsilon & r \end{pmatrix} \geq 0. \]
Then
\[ \begin{pmatrix} \delta & \epsilon \\ \epsilon & r \end{pmatrix} \otimes p \otimes I_k  =  \begin{pmatrix} 0 & \epsilon \\ \epsilon & 0 \end{pmatrix} \otimes p \otimes I_k + \begin{pmatrix} \delta & 0 \\ 0 & r \end{pmatrix} \otimes p \otimes I_k \in \mathcal{C}_{2k}^n \]
and
\[ \begin{pmatrix} r & \epsilon \\ \epsilon & \delta \end{pmatrix} \otimes p^{\perp} \otimes I_k =  \begin{pmatrix} 0 & \epsilon \\ \epsilon & 0 \end{pmatrix} \otimes p^{\perp} \otimes I_k + \begin{pmatrix} r & 0 \\ 0 & \delta \end{pmatrix} \otimes p^{\perp} \otimes I_k \in \mathcal{C}_{2k}^n. \]
Summing these terms, we have
\[ \epsilon \begin{pmatrix} 0 & e \\ e & 0 \end{pmatrix} \otimes I_k  + \delta (p \oplus p^{\perp}) \otimes I_k  + r (p^{\perp} \oplus p) \otimes I_k \in \mathcal{C}_{2k}^n. \]
Applying the canonical shuffle  $M_2(\mathcal{V}) \otimes M_k \to M_k \otimes M_2(\mathcal{V})$ to the above matrix, we obtain
\[ \alpha := \epsilon I_k \otimes \begin{pmatrix} 0 & e \\ e & 0 \end{pmatrix}  + \delta I_k \otimes (p \oplus p^{\perp}) + r I_k \otimes (p^{\perp} \oplus p) \in \mathcal{C}_{2k}^n. \]
Therefore for every $\delta > 0$ there exists $s  = t - \epsilon/2 + r > 0$ such that
\begin{eqnarray}
\mathcal{C}^n_{2k} & \ni & x \otimes J_2 + \frac{\epsilon}{2} I_k \otimes (p \oplus p^{\perp}) + t I_k \otimes (p^{\perp} \oplus p) + \frac{\epsilon}{2} I_{2k} \otimes e + \alpha \nonumber \\
& = & (x + \epsilon I_k \otimes e) \otimes J_2 + \delta I_k \otimes (p \oplus p^{\perp}) + (t - \frac{\epsilon}{2} + r) I_k \otimes (p^{\perp} \oplus p). \nonumber
\end{eqnarray}
We conclude that $x + \epsilon I_k \otimes e \in \mathcal{C}_k^n$, since $p$ is abstract projection in $(\mathcal{V}, \mathcal{C}^n, e)$. Thus, for every $\epsilon > 0$, there exists $n \in \mathbb{N}$ such that $x + \epsilon I_k \otimes e \in \mathcal{C}_k^n$. It follows that $x \in \mathcal{C}^\infty_k$. Since $x$ was arbitrary, we conclude that $p$ is an abstract projection in $(\mathcal{V}, \mathcal{C}^{\infty}, e)$.
\end{proof}

The above proposition, together with the preliminary results on abstract projections, will provide the basic ingredients for constructing universal operator systems spanned by projections. Let $p_1, \dots, p_N$ be the positive contractions satisfying $\mathcal{R}$ generating the operator system $(\mathcal{U}_{\mathcal{R}}, E^{\text{max}}, e)$. In the following, we will construct a new matrix ordering $\mathcal{E}^{\text{proj}}$ for $\mathcal{U}_{\mathcal{R}}$ as an inductive limit of matrix orderings. The first term of the inductive limit will be $\mathcal{E}^0 := E^{\text{max}}$. To obtain the rest of the sequence, first extend the list $\{p_1, \dots, p_N\}$ to an infinite sequence by setting $p_{m} = p_i$ whenever $m = i \mod N$ for every $m \in \mathbb{N}$ and $i \in [N]$. For example, $p_{N+1} = p_1, p_{N+2}=p_2$, and so on. With this convention, we define $\mathcal{E}^{n+1} = \mathcal{E}^k(p_{n+1})$ (in the notation of Definition \ref{defn: C(p)}) for every $n \in \mathbb{N}$. We define $\mathcal{E}^{\text{proj}} := \mathcal{E}^{\infty}$.

\begin{thm}\label{thm: universal projection system}
Suppose there exists an operator system $(\mathcal{V}, \mathcal{D}, f)$ spanned by projections $q_1, \dots, q_N$ and unit $f$ satisfying relations $\mathcal{R}$ with $\dim(\mathcal{V}) = \dim(\mathcal{U}_{\mathcal{R}})$. Then: 
\begin{enumerate}
    \item the matrix ordering $\mathcal{E}^{\text{proj}}$ is proper.
    \item the elements $p_1, \dots, p_N$ in the operator system $(\mathcal{U}_{\mathcal{R}}, \mathcal{E}^{\text{proj}}, e)$ are abstract projections.
    \item the mapping $\varphi: \mathcal{U}_{\mathcal{R}} \to \mathcal{V}$ defined by $\varphi(e)=f$ and $\varphi(p_i) = q_i$ for all $i \in [N]$ is completely positive.
\end{enumerate}
\end{thm}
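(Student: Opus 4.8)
The plan is to leverage the linear isomorphism $\varphi$ and the intrinsic characterization of abstract projections. By Proposition~\ref{prop: universal vector space}, the hypothesis $\dim(\mathcal V)=\dim(\mathcal U_{\mathcal R})$ already forces $\varphi\colon\mathcal U_{\mathcal R}\to\mathcal V$ to be a bijective linear map. The core of the argument is the following claim, proved by induction on $k\ge 0$: the map $\varphi$ is completely positive from $(\mathcal U_{\mathcal R},\mathcal E^k,e)$ to $(\mathcal V,\mathcal D,f)$, and $\mathcal E^k$ is a proper matrix ordering, so that $(\mathcal U_{\mathcal R},\mathcal E^k,e)$ is an operator system. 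For the base case $k=0$ we have $\mathcal E^0=E^{\text{max}}$; since the $q_i$, being projections, are in particular positive contractions satisfying $\mathcal R$, Proposition~\ref{prop: Universal AOU space} shows that $\varphi$ is positive on $E$ and (using $\dim\mathcal V=\dim\mathcal U_{\mathcal R}$) that $E$ is proper, so by Theorem~\ref{thm: cmax universal property} the triple $(\mathcal U_{\mathcal R},E^{\text{max}},e)$ is an operator system on which $\varphi$ is completely positive.

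The inductive step rests on a lemma isolating the role of Definition~\ref{defn: C(p)}: if $\psi\colon(\mathcal W_1,\mathcal C^1,e_1)\to(\mathcal W_2,\mathcal C^2,e_2)$ is unital and completely positive, $0\le p\le e_1$, and $\psi(p)$ is an abstract projection in $\mathcal W_2$, then $\psi$ is completely positive from $(\mathcal W_1,\mathcal C^1(p),e_1)$ to $(\mathcal W_2,\mathcal C^2,e_2)$. Indeed, if $x\in\mathcal C^1(p)_n$ and $\epsilon>0$, choosing $t>0$ with $x\otimes J_2+\epsilon I_n\otimes(p\oplus p^\perp)+tI_n\otimes(p^\perp\oplus p)\in\mathcal C^1_{2n}$ and applying the positive map $\psi_{2n}$, together with $\psi(p^\perp)=e_2-\psi(p)=\psi(p)^\perp$, shows $\psi_n(x)\in\mathcal C^2(\psi(p))_n$; since $\psi(p)$ is an abstract projection, $\mathcal C^2(\psi(p))=\mathcal C^2$ by Theorem~\ref{thm: abstract projections}, so $\psi_n(x)\in\mathcal C^2_n$. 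Applying this with $\psi=\varphi$ (unital because $\varphi(e)=f$, completely positive on $\mathcal E^k$ by the inductive hypothesis) and $p=p_{k+1}$ — noting $\varphi(p_{k+1})$ equals some $q_i$, which is a projection in $\mathcal V$ and hence an abstract projection — yields that $\varphi$ is completely positive on $\mathcal E^{k+1}=\mathcal E^k(p_{k+1})$. Moreover, since $\mathcal D$ is proper and $\varphi_n$ is injective, any $x\in\mathcal E^{k+1}_n\cap(-\mathcal E^{k+1}_n)$ satisfies $\varphi_n(x)=0$ and hence $x=0$, so $\mathcal E^{k+1}$ is proper; by the results of \cite{araiza2021universal} surrounding Proposition~\ref{prop: p projection in C(p)}, $(\mathcal U_{\mathcal R},\mathcal E^{k+1},e)$ is then an operator system. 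This completes the induction. Passing to the inductive limit, the same two arguments give that $\varphi$ is completely positive on $\mathcal E^{\text{proj}}=\mathcal E^\infty$ (using that $\mathcal D$ is Archimedean closed) and that $\mathcal E^{\text{proj}}$ is proper, whence $(\mathcal U_{\mathcal R},\mathcal E^{\text{proj}},e)$ is an operator system by \cite{araiza2021universal}; this proves (1) and (3).

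For (2), I would invoke Proposition~\ref{prop: inductive limit projection}. The delicate point is that after forming $\mathcal E^{k+1}=\mathcal E^k(p_{k+1})$ with $p_{k+1}\ne p_i$, the element $p_i$ need not remain an abstract projection, so the proposition cannot be applied to the whole sequence $\{\mathcal E^k\}$ directly. Instead, fix $i\in[N]$ and pass to the subsequence $\mathcal F^j:=\mathcal E^{\,i+jN}$ for $j\ge 0$. Because the generators were extended periodically, $i+jN\equiv i\pmod N$, so $\mathcal F^j=\mathcal E^{\,i+jN-1}(p_i)$; thus by Proposition~\ref{prop: p projection in C(p)} and the properness of $\mathcal F^j$ established above, $p_i$ is an abstract projection in each $(\mathcal U_{\mathcal R},\mathcal F^j,e)$. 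Since $\{\mathcal F^j\}$ is a cofinal subsequence of $\{\mathcal E^k\}$, the union of the $\mathcal F^j_n$ equals the union of the $\mathcal E^k_n$, so the inductive limit of $\{\mathcal F^j\}$ is again $\mathcal E^{\text{proj}}$, which is proper by (1). Proposition~\ref{prop: inductive limit projection} now applies and shows $p_i$ is an abstract projection in $(\mathcal U_{\mathcal R},\mathcal E^{\text{proj}},e)$; doing this for every $i\in[N]$ gives (2).

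I expect the main obstacle to be precisely this last point: controlling whether each $p_i$ persists as an abstract projection through the entire inductive limit, which is exactly why the construction interleaves the generators periodically rather than projecting with respect to each of them only once — the subsequence trick above is what converts this interleaving into the hypotheses of Proposition~\ref{prop: inductive limit projection}. The complete positivity and properness assertions are comparatively mechanical once the transfer lemma of the second paragraph is in hand, the only external input being the fact from \cite{araiza2021universal} that a proper $\mathcal C(p)$, and a proper inductive limit of matrix orderings, automatically carry an operator system structure.
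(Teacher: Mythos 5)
Your proposal is correct and follows essentially the same route as the paper: complete positivity of $\varphi$ on each $\mathcal{E}^k$ by induction (your ``transfer lemma'' is exactly the paper's inductive step, using that $\mathcal{D}(q)=\mathcal{D}$ for the projection $q=\varphi(p_{k+1})$), properness of $\mathcal{E}^{\text{proj}}$ via injectivity of the completely positive map $\varphi$, and for (2) the identical cofinal-subsequence trick $\{\mathcal{E}^{i+jN}\}_j$ combined with Proposition~\ref{prop: p projection in C(p)} and Proposition~\ref{prop: inductive limit projection}. The only (harmless) difference is that you explicitly record properness of each intermediate $\mathcal{E}^k$, which the paper leaves implicit.
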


\begin{proof}
We first check (3). By Proposition \ref{prop: universal vector space}, the mapping $\varphi$ is a linear bijection since the elements $q_1, \dots, q_N$ satisfy $\mathcal{R}$. We will show inductively that for each $n \in \mathbb{N}$, $\varphi$ is completely positive with respect to the matrix ordering $\mathcal{E}^n$. When $n=0$, $\varphi$ is completely positive by Theorem \ref{thm: Universal contraction system}. Suppose that $\varphi$ is completely positive on $\mathcal{E}^n$. Let $p = p_{n+1}$, so that $\mathcal{E}^{n+1} = \mathcal{E}^n(p)$. Let $q = q_i$ where $i = n+1 \mod N$. Then $\varphi(p)=q$. Suppose that $x \in \mathcal{E}^{n+1}_m$. Then for every $\epsilon > 0$ there exists $t > 0$ such that
\[ x \otimes J_2 + \epsilon I_m \otimes (p \oplus p^{\perp}) + t I_m \otimes (p^{\perp} \oplus p) \in \mathcal{E}^n_{2m}. \]
Since $\varphi$ is completely positive on $\mathcal{E}^n$, for every $\epsilon > 0$ there exists $t > 0$ such that
\[ \varphi_{m}(x) \otimes J_2 + \epsilon I_m \otimes (q \oplus q^{\perp}) + t I_m \otimes (q^{\perp} \oplus q) \in \mathcal{D}_{2m}. \]
Since $q$ is a projection, this implies that $\varphi_{m}(x)$ is positive. Therefore $\varphi$ is completely positive on $\mathcal{E}^{n+1}$. By induction, $\varphi$ is completely positive on $\mathcal{E}^n$ for every $n$ and hence it is completely positive on $\mathcal{E}^{\infty} = \mathcal{E}^{\text{proj}}$. This proves statement (3).

For statement (1), since $\varphi$ is a completely positive injective map and since $\varphi(\mathcal{E}_1^{\text{proj}} \cap -\mathcal{E}_1^{\text{proj}}) = \{0\}$, we conclude that $\mathcal{E}^{\text{proj}}$ is proper. For statement (2), since $\mathcal{E}^{\text{proj}}$ is proper we see that $(\mathcal{U}_{\mathcal{R}}, \mathcal{E}^{\text{proj}}, e)$ is an operator system. Let $i \in [N]$. To see that $p_i$ is an abstract projection, observe that $\mathcal{E}^{\text{proj}}$ is the inductive limit of the sequence $\{\mathcal{E}^{kN + i}\}_{k=0}^\infty$. Since each term in this sequence has the form $\mathcal{E}^{n}(p_i)$ for some $n \in \mathbb{N}$, $p_i$ is an abstract projection in each term of the inductive limit of operator systems, by Proposition \ref{prop: p projection in C(p)}. By Proposition \ref{prop: inductive limit projection}, $p_i$ is a projection in $\mathcal{E}^{\text{proj}}$. This proves statement (2).
\end{proof}

\section{Universal $k$-AOU spaces spanned by projections}\label{sec: universal k-relation AOU}

In this section, we will endow the vector space $\mathcal{U}_{\mathcal{R}}$ with the structure of $k$-AOU space such that its generators $\{p_1, p_2, \dots, p_N\}$ are all abstract projections. This $k$-AOU space will be universal with respect to all $k$-AOU spaces spanned by projections satisfying $\mathcal{R}$. The ordering on this $k$-AOU space will be constructed as an inductive limit of cones, analogous to the inductive limit of matrix orderings used in the previous section. However, we will need new techniques to show that this inductive limit has the desired properties.

\begin{defn}
Given a $k$-AOU space $(\cc V, C, e)$ and a contraction $p \in C$, then define the set $C[p]$ to be the set of all $x \in M_k(\mathcal{V})_h$ such that for every $\e>0$ there exists $t>0$ satisfying \[
(\alpha + \beta)^* x (\alpha + \beta) + (\epsilon \alpha^* \alpha + t \beta^* \beta) \otimes p + (\epsilon \beta^* \beta + t \alpha^* \alpha) \otimes p^{\perp} \in C \]
for all $\alpha, \beta \in M_k$.
\end{defn}

\begin{lem}\label{lem: alternate characterization of C[p]}
Given a $k$-AOU space $(\cc V, C, e)$ with positive contraction $p$, then $x \in C[p]$ if and only if for every $\epsilon > 0$ there exists $t > 0$ such that
\[ \begin{pmatrix} x & x \\ x & x \end{pmatrix} + \epsilon \begin{pmatrix} p & 0 \\ 0 & p^{\perp} \end{pmatrix} \otimes I_k + t \begin{pmatrix}  p^{\perp} & 0 \\ 0 &  p \end{pmatrix} \otimes I_k \in C_{2k}^{k-\text{min}}. \]
\end{lem}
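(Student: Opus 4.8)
The plan is to unwind the definition of $C_{2k}^{k-\text{min}}$ and recognize the defining inequality of $C[p]$ inside it. Recall that for the $k$-AOU space $(\mathcal{V},C,e)$ one has $y \in C_{2k}^{k-\text{min}}$ if and only if $\gamma^* y \gamma \in C$ for every $\gamma \in M_{2k,k}$. So I would fix $\epsilon > 0$ and $t > 0$, set
\[ Y := \begin{pmatrix} x & x \\ x & x \end{pmatrix} + \epsilon \begin{pmatrix} p & 0 \\ 0 & p^{\perp} \end{pmatrix} \otimes I_k + t \begin{pmatrix} p^{\perp} & 0 \\ 0 & p \end{pmatrix} \otimes I_k \in M_{2k}(\mathcal{V})_h , \]
and compute the compression $\gamma^* Y \gamma$ for an arbitrary $\gamma \in M_{2k,k}$.

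Every $\gamma \in M_{2k,k}$ can be written uniquely as $\gamma = \begin{pmatrix} \alpha \\ \beta \end{pmatrix}$ with $\alpha, \beta \in M_k$, and conversely every such pair yields a $\gamma$; thus $(\alpha,\beta) \mapsto \gamma$ is a bijection of $M_k \times M_k$ onto $M_{2k,k}$. Expanding the compression blockwise: since all four blocks of $\begin{pmatrix} x & x \\ x & x \end{pmatrix}$ equal $x$, that summand contributes $(\alpha+\beta)^* x (\alpha+\beta)$. The remaining two summands are block diagonal, and since $p$ (resp. $p^{\perp}$) acts as a scalar on the $M_k$ tensor leg, $\alpha^*(I_k \otimes p)\alpha = \alpha^*\alpha \otimes p$; hence they contribute $\epsilon(\alpha^*\alpha \otimes p + \beta^*\beta \otimes p^{\perp})$ and $t(\alpha^*\alpha \otimes p^{\perp} + \beta^*\beta \otimes p)$ respectively. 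Summing,
\[ \gamma^* Y \gamma = (\alpha+\beta)^* x (\alpha+\beta) + (\epsilon \alpha^*\alpha + t \beta^*\beta) \otimes p + (\epsilon \beta^*\beta + t \alpha^*\alpha) \otimes p^{\perp}, \]
which is precisely the element appearing in the definition of $C[p]$.

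Consequently, for each fixed $\epsilon, t$, the statement $Y \in C_{2k}^{k-\text{min}}$ is equivalent, via the bijection above, to the statement that $(\alpha+\beta)^* x (\alpha+\beta) + (\epsilon \alpha^*\alpha + t \beta^*\beta) \otimes p + (\epsilon \beta^*\beta + t \alpha^*\alpha) \otimes p^{\perp} \in C$ for all $\alpha, \beta \in M_k$. Prepending the quantifiers ``for every $\epsilon > 0$ there exists $t > 0$'' to both sides then gives exactly the equivalence asserted in the lemma. I do not expect a genuine obstacle here; the statement is a bookkeeping translation between the two formulations, and the only points requiring care are the tensor-product and compression conventions --- specifically that stacking two $k \times k$ blocks exhausts $M_{2k,k}$ and that the ``central'' element $p$ factors out of $\alpha^*(I_k \otimes p)\alpha$.
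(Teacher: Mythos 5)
Your proof is correct and follows essentially the same route as the paper's: both identify the defining expression of $C[p]$ as the compression $\gamma^* Y \gamma$ with $\gamma = \begin{pmatrix} \alpha \\ \beta \end{pmatrix}$ and then invoke the definition of the $k$-minimal cone $C_{2k}^{k\text{-min}}$ (the paper phrases this via a canonical shuffle and, in the converse direction, detours through $C_k^{k\text{-min}}$ being an extension of $C$, but the content is identical). Your observation that every $\gamma \in M_{2k,k}$ arises as such a stacked pair is exactly the point that makes the equivalence go through in both directions.
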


\begin{proof}
Let $x \in C[p]$. Then for all $\e >0$ there exists $t>0$ such that \begin{align*}
    (\alpha + \beta)^*x(\alpha + \beta) + (\epsilon \alpha^*\alpha + t\beta^*\beta) \otimes p + (t\alpha^*\alpha + \epsilon \beta^*\beta) \otimes p^\perp \in C,
\end{align*} for all $\alpha,\beta \in M_k$. We rewrite the above as \begin{align*}
    \begin{pmatrix}
    \alpha \\
    \beta
    \end{pmatrix}^* [ J_2 \otimes x + \epsilon (p \oplus p^\perp) \otimes I_k + t (p^\perp \oplus p) \otimes I_k] \begin{pmatrix}
    \alpha \\
    \beta 
    \end{pmatrix},
\end{align*} which implies $x \otimes J_2 + \epsilon I_k \otimes (p \oplus p^\perp) + tI_k \otimes (p^\perp \oplus p) \in C_{2k}^{k-\text{min}}$ by applying the canonical shuffle 
\[ \varphi: M_2 \otimes M_k \otimes \mathcal{V} \to M_k \otimes M_2 \otimes \mathcal{V}. \]
Conversely, if $x \otimes J_2 + \epsilon I_k \otimes (p \oplus p^\perp) + tI_k \otimes (p^\perp \oplus p) \in C_{2k}^{k-\text{min}}$ for some $\epsilon, t > 0$ then by applying the canonical shuffle $\varphi^{-1}$ and after conjugation by $\begin{pmatrix}
\alpha &
\beta
\end{pmatrix}^T$, compatibility of the k-minimal structure yields $(\alpha + \beta)^*x(\alpha + \beta) + (\epsilon \alpha^*\alpha + t\beta^*\beta) \otimes p + (t\alpha^*\alpha + \epsilon \beta^*\beta) \otimes p^\perp \in C_k^{k-\text{min}}$. Since $C^{k-\text{min}}$ is an extension of $C$, 
\[ (\alpha + \beta)^*x(\alpha + \beta) + (\epsilon \alpha^*\alpha + t\beta^*\beta) \otimes p + (t\alpha^*\alpha + \epsilon \beta^*\beta) \otimes p^\perp \in C. \]
Since this holds for all $\e >0$, we have the conclusion. \qedhere
\end{proof}

\begin{cor} \label{cor: C=C[p] means proj in k-min}
Let $k \in \mathbb{N}$, and let $(\mathcal{V},C,e)$ be a $k$-AOU space. Suppose that $p \in \mathcal{V}$ is a positive contraction. Then $p$ is an abstract projection in the operator system $(\mathcal{V}, C^{k-\text{min}}, e)$ if and only if $C = C[p]$.
\end{cor}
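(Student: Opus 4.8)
The plan is to reduce the statement to the already-established characterization of abstract projections (Theorem~\ref{thm: abstract projections}) applied to the operator system $(\mathcal{V}, C^{k-\text{min}}, e)$, by showing that the relevant matrix ordering $\mathcal{C}(p)$ attached to $C^{k-\text{min}}$ is controlled at level $k$ by the set $C[p]$. Recall that by Theorem~\ref{thm: abstract projections}, $p$ is an abstract projection in $(\mathcal{V}, C^{k-\text{min}}, e)$ if and only if $C^{k-\text{min}} = (C^{k-\text{min}})(p)$ as matrix orderings. So I first want to relate the $k$th level of $(C^{k-\text{min}})(p)$ to $C[p]$: unwinding Definition~\ref{defn: C(p)} at $n=k$, an element $x \in M_k(\mathcal{V})_h$ lies in $(C^{k-\text{min}})(p)_k$ precisely when for every $\epsilon>0$ there is $t>0$ with $x\otimes J_2 + \epsilon I_k\otimes(p\oplus p^\perp) + t I_k\otimes(p^\perp\oplus p)\in C_{2k}^{k-\text{min}}$, which by Lemma~\ref{lem: alternate characterization of C[p]} is exactly the condition defining $C[p]$. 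Hence $(C^{k-\text{min}})(p)_k = C[p]$.

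Next I would argue that this single identity at level $k$ suffices to decide the full matrix-ordering equality. The key observation is that both $C^{k-\text{min}}$ and $(C^{k-\text{min}})(p)$ are Archimedean-closed matrix orderings extending their $k$th cones, i.e.\ they are $k$-minimal over their level-$k$ data: a matrix ordering of the form $\mathcal{D}^{k-\text{min}}$ is determined by $\mathcal{D}_k$, and conversely $(\mathcal{C}(p))^{k-\text{min}}$ is again of the form $\mathcal{C}(p)$ when $\mathcal{C}$ is $k$-minimal (this compatibility is implicit in the canonical-shuffle computation of Lemma~\ref{lem: alternate characterization of C[p]} and in the containment $\mathcal{C}_n\subseteq\mathcal{C}(p)_n$ recorded after Definition~\ref{defn: C(p)}). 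Therefore $C^{k-\text{min}} = (C^{k-\text{min}})(p)$ as matrix orderings if and only if $C^{k-\text{min}}_k = (C^{k-\text{min}})(p)_k$, and the left side is $C$ (since $C^{k-\text{min}}$ extends $C$) while the right side is $C[p]$ by the previous paragraph. Combining: $p$ is an abstract projection in $(\mathcal{V}, C^{k-\text{min}}, e)$ $\iff$ $C^{k-\text{min}} = (C^{k-\text{min}})(p)$ $\iff$ $C = C[p]$, which is the claim.

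The step I expect to be the main obstacle is the reduction ``level-$k$ equality implies full equality'' in the second paragraph — i.e.\ verifying carefully that $(C^{k-\text{min}})(p)$ really is the $k$-minimal matrix ordering generated by its own $k$th cone $C[p]$. One needs to check that $\big((C^{k-\text{min}})(p)\big)_n = \{x\in M_n(\mathcal{V})_h : \gamma^* x\gamma \in C[p]\ \forall \gamma\in M_{n,k}\}$ for all $n$, which amounts to interchanging the two ``for all $\gamma$'' / canonical-shuffle quantifiers with the ``$\exists t$, $\forall \epsilon$'' quantifiers in the definition of $\mathcal{C}(p)$, using compatibility of $C^{k-\text{min}}$ under conjugation by $M_{2n,2k}$ scalar matrices and the fact that a $2\times 2$ block conjugation commutes with the $\gamma$-conjugation after a shuffle. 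The Archimedean-closure bookkeeping (the $\epsilon$'s coming both from the inductive-limit/Archimedean definition and from the definition of $\mathcal{C}(p)$) is routine but must be done in the right order, exactly as in the proof of Lemma~\ref{lem: alternate characterization of C[p]}. Once that functoriality of the $(-)(p)$ construction with respect to $k$-minimality is in hand, the corollary follows immediately from Theorem~\ref{thm: abstract projections}.
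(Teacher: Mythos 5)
Your first step is exactly the paper's: via Lemma~\ref{lem: alternate characterization of C[p]} and the canonical shuffle one identifies $C[p] = \big(C^{k-\text{min}}\big)(p)_k$, and then everything is referred to Theorem~\ref{thm: abstract projections}. The forward implication is then immediate, as you say. The problem is the step you yourself flag as the main obstacle: you propose to prove that $\big(C^{k-\text{min}}\big)(p)$ coincides with the full $k$-minimal matrix ordering generated by its $k$-th cone, i.e.\ $\big(C^{k-\text{min}}\big)(p)_n = \{x \in M_n(\mathcal{V})_h : \gamma^* x \gamma \in C[p]\ \text{for all}\ \gamma \in M_{n,k}\}$ for every $n$. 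The containment $\subseteq$ is one line (compatibility of the matrix ordering $\mathcal{C}(p)$ under scalar conjugation), but the reverse containment is a genuine $\forall\gamma\,\exists t$ versus $\exists t\,\forall\gamma$ interchange: each $\gamma$ supplies its own witness $t_\gamma$, and manufacturing a single $t$ that works in the defining condition of $\mathcal{C}(p)_n$ requires a uniformity or compactness argument that you have not supplied and that is not obviously available. As written, this is a gap.

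Fortunately the gap sits in a claim you do not need. For the reverse implication of the corollary, only the easy containment is required: if $C = C[p]$, then compatibility alone gives, for every $n$,
\[ \big(C^{k-\text{min}}\big)(p)_n \subseteq \big\{x : \gamma^* x \gamma \in \big(C^{k-\text{min}}\big)(p)_k = C \ \text{for all}\ \gamma \in M_{n,k}\big\} = C^{k-\text{min}}_n, \]
and combined with the standing containment $C^{k-\text{min}}_n \subseteq \big(C^{k-\text{min}}\big)(p)_n$ recorded after Definition~\ref{defn: C(p)}, this yields $\big(C^{k-\text{min}}\big)(p) = C^{k-\text{min}}$, whence Theorem~\ref{thm: abstract projections} finishes the proof. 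This is exactly the paper's argument. Your biconditional ``level-$k$ equality iff full matrix-ordering equality'' is true in this situation, but its nontrivial direction should be proved through the one-sided containment above rather than through the unproven (and unneeded) assertion that $\big(C^{k-\text{min}}\big)(p)$ is itself the $k$-minimal structure over $C[p]$.
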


\begin{proof}
Applying the canonical shuffle $\varphi: M_2 \otimes M_k \otimes \mathcal{V} \to M_k \otimes M_2 \otimes \mathcal{V}$ to the expression in Lemma \ref{lem: alternate characterization of C[p]}, we see that $x \in C[p]$ if and only if $x \in C^{k-\text{min}}(p)_k$. For any $n \in \mathbb{N}$, $C^{k-\text{min}}(p)_n \subseteq C[p]_n^{k-\text{min}}$ since $C[p] = C^{k-\text{min}}(p)_k$. If $p$ is an abstract projection in $(\mathcal{V}, C^{k-\text{min}}, e)$, then $C[p] = C^{k-\text{min}}(p)_k = C$. On the other hand, if $C = C[p]$, then for every $n \in \mathbb{N}$, $C^{k-\text{min}}(p)_n \subseteq C[p]_n^{k-\text{min}} = C_n^{k-\text{min}}$, implying that $C^{k-\text{min}}(p)_n \subseteq C^{k-\text{min}}_n$. But we have $C^{k-\text{min}}_n \subseteq C^{k-\text{min}}(p)_n$ automatically, so $C^{k-\text{min}}(p) = C^{k-\text{min}}$. So $p$ is an abstract projection in $(\mathcal{V}, C^{k-\text{min}}, e)$.
\end{proof}

\begin{defn}
Let $(\mathcal{V}, C, e)$ be a $k$-AOU space. Then a positive contraction $p \in \mathcal{V}$ is called an \textbf{abstract projection in the $k$-AOU sense} if $C = C[p]$. When it is clear from context that $\mathcal{V}$ is a $k$-AOU space, we simply call $p$ an abstract projection in $\mathcal{V}$.
\end{defn}

Our goal will be to construct a sequence of cones in $M_k(\mathcal{U}_{\mathcal{R}})$ for which $\{p_1, p_2, \dots, p_N \}$ are all abstract projections in the $k$-AOU sense for the resulting $k$-AOU structure on $\mathcal{U}_{\mathcal{R}}$. Although the strategy is similar to the one used in Section \ref{sec: univeral projection system}, we will not be able to apply the results of that section directly. This is partly because we do not know that $p$ is an abstract projection in the $k$-AOU space $(\mathcal{V}, C[p], e)$, even when $C[p]$ is proper, whereas in Section \ref{sec: univeral projection system} we exploited the fact that $p$ was an abstract projection in the operator system $(\mathcal{V}, \mathcal{C}(p), e)$ whenever $\mathcal{C}(p)$ was proper (Proposition \ref{prop: p projection in C(p)}).

In the following, $\lim_{n \to \infty} C^n$ and $C^{\infty}$ both denote the Archimedean closure of the union of a nested increasing sequence of cones $\{C^n\}_{n \in \bb N}$, where each $C^n \subseteq M_k(\cc V)_h$ is a cone on the $k \times k$ matrices over the $*$-vector space $\cc V$. We call the resulting cone $C^{\infty}$ the \textbf{$k$-inductive limit} of the sequence $\{C^n\}$.

\begin{lem}\label{lem: inductive limits of k-cones form a non-proper k-aou}
Let $\cc V$ be a $*$-vector space. Let $\{C^n\}_{n \in \bb N}$ be a nested increasing sequence of proper cones $C^n \subseteq M_k(\cc V)_h$ such that for each $n \in \bb N$ the triple $(\cc V, C^n, e)$ is a $k$-AOU space.
 Then the $k$-inductive limit $C^\infty$ is a (possibly non-proper) cone, and the triple $(\cc V, C^\infty, e)$ forms a (possibly non-proper) k-AOU space. 
\end{lem}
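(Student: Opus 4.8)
\emph{Proof proposal.} The plan is to verify, one at a time, the defining properties of a $k$-AOU space for $(\cc V, C^\infty, e)$ — that $C^\infty$ is a cone in $M_k(\cc V)_h$, that it is compatible (invariant under $\alpha^*(\cdot)\alpha$ for $\alpha \in M_k$), and that $e_k := I_k \otimes e$ is an Archimedean order unit for $(M_k(\cc V), C^\infty)$ — while being careful to claim \emph{nothing} about properness, which may genuinely fail.

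First I would record two elementary facts that get used repeatedly. Since $e_k$ is an order unit for $(M_k(\cc V), C^n)$, taking $v = 0$ in the definition shows $e_k \in C^n$; and consequently, for every $P \in M_k^+$, writing $P = \beta^*\beta$ with $\beta \in M_k$ gives $P \otimes e = \beta^* e_k \beta \in C^n$ by compatibility of $C^n$. I would also observe that $U := \bigcup_n C^n$ is itself a cone (any two of its elements lie in a common $C^n$ by nestedness), that it is compatible ($\alpha^* U \alpha = \bigcup_n \alpha^* C^n \alpha \subseteq U$), and that $e_k$ is an order unit for $(M_k(\cc V), U)$ because it already is for $C^1 \subseteq U$. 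By definition $C^\infty$ is the Archimedean closure of $U$ with respect to $e_k$, i.e. $x \in C^\infty$ iff $x \in M_k(\cc V)_h$ and $x + r e_k \in U$ for all $r > 0$.

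Next I would check the cone axioms for $C^\infty$ straight from this description: closure under nonnegative scaling is immediate (rescale the parameter $r$, with $0 \cdot x = 0 \in C^\infty$ handled separately), and closure under addition follows by splitting $x + y + r e_k = (x + \tfrac r2 e_k) + (y + \tfrac r2 e_k)$ and absorbing both summands into a single $C^n$ using nestedness. For compatibility, fix $x \in C^\infty$ and $\alpha \in M_k$ (the case $\alpha = 0$ being trivial); given $r > 0$, choose $n$ with $x + \tfrac{r}{\norm{\alpha}^2} e_k \in C^n$, conjugate to get $\alpha^* x \alpha + \tfrac{r}{\norm{\alpha}^2}(\alpha^*\alpha) \otimes e \in C^n$, and add $\tfrac{r}{\norm{\alpha}^2}(\norm{\alpha}^2 I_k - \alpha^*\alpha) \otimes e \in C^n$ (a positive scalar matrix tensored with $e$, by the fact above); the sum is $\alpha^* x \alpha + r e_k \in C^n \subseteq U$, so $\alpha^* x \alpha \in C^\infty$. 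That $e_k$ is an order unit for $C^\infty$ is clear since $C^1 \subseteq C^\infty$ and $e_k$ already is one there. Finally, $e_k$ being Archimedean for $C^\infty$ is the general principle that an Archimedean closure is Archimedean closed: if $x + r e_k \in C^\infty$ for every $r > 0$, then applying the defining condition of $C^\infty$ to $x + \tfrac r2 e_k$ with parameter $\tfrac r2$ shows $x + r e_k \in U$ for every $r > 0$, i.e. $x \in C^\infty$.

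The whole argument is routine bookkeeping with positive parameters, nestedness, and the behaviour of Archimedean closures; the only step requiring a genuine (if minor) idea is the compatibility of $C^\infty$, where one must dominate $\alpha^* e_k \alpha$ by a multiple of $e_k$, which is exactly what forces the preliminary observation that $M_k^+ \otimes e \subseteq C^n$. No properness is assumed at any point, and none is claimed in the conclusion.
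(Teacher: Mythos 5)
Your proof is correct and follows essentially the same route as the paper's: the cone axioms via the $\tfrac r2+\tfrac r2$ split, compatibility via conjugating $x+\tfrac{r}{\norm{\alpha}^2}e_k$ and dominating $(\alpha^*\alpha)\otimes e$ by $\norm{\alpha}^2 I_k\otimes e$, and Archimedeanness via applying the defining condition of $C^\infty$ to $x+\tfrac r2 e_k$. You merely spell out a few steps the paper calls immediate (including the useful observation $M_k^+\otimes e\subseteq C^n$ and the $\alpha=0$ case), so no further comment is needed.
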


\begin{proof}
The fact that $C^\infty$ is closed under sums and action by nonnegative real numbers is immediate. If $a \in M_k$ is non-zero, $x \in C^\infty,$ and $\e>0$, then let $L \in \bb N$ such that $x + \frac{\e}{\norm{a}^2} I_k \otimes e \in C^L$. Conjugating by $a$ and using
\[ a^*xa + \frac{\e}{\norm{a}^2} a^*a \otimes e \leq a^*xa + \e I_k \otimes e \]
it follows that $a^*xa + \e I_k \otimes e \in C^L$ (if $a=0$, this also holds trivially). Hence, for every $\epsilon > 0$ there exists $L \in \mathbb{N}$ such that $a^*xa + \e I_k \otimes e \in C^L$ and therefore $a^*xa \in C^\infty.$ 

Let $x \in M_k(\cc V)_h$ and let $L \in \bb N$. Then there exists $r>0$ such that $rI_k \otimes e-x \in C^L \subseteq C^\infty$. Thus, $e$ is an order unit for the pair $(\cc V, C^\infty)$. Similarly, suppose for all $\e >0$ one has $\e I_k \otimes e + x \in C^\infty$. Then for each $\epsilon > 0$ there exists $L \in \bb N$ such that $(\frac{\e}{2}I_k \otimes e + x) + \frac{\e}{2}I_k \otimes e \in C^L.$ Thus, for every $\e > 0$ there exists $L \in \mathbb{N}$ such that $\e I_k \otimes e + x \in C^L$. By the definition of $C^{\infty}$, $x \in C^{\infty}$. This finishes the proof. \qedhere
\end{proof}

For the remainder of this section, let $\mathfrak{S}(C)$ denote the set of unital $k$-positive maps $\phi: (\cc V, C, e) \to M_k$, where $(\cc V, C, e)$ is a (possibly non-proper) $k$-AOU space.

\begin{lem}\label{lem: states on inductive k-limit}
Let $\cc V$ be a $k$-AOU space and let $\{C^n\}_{n \in \bb N}$ be a nested increasing sequence of cones in $M_k(\cc V)_h$ such that $(\cc V, C^n, e)$ is a $k$-AOU space for each $n \in \mathbb{N}$. Then 
\[ \bigcap_{n=1}^\infty \mathfrak{S}(C^n) = \mathfrak{S}(C^\infty) \] 
\end{lem}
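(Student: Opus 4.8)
The plan is to prove the two inclusions separately, and the key observation is that membership in both $\mathfrak{S}(C^\infty)$ and $\bigcap_n \mathfrak{S}(C^n)$ can be tested on a cofinal family of cones in a uniform way. First I would establish $\mathfrak{S}(C^\infty) \subseteq \bigcap_n \mathfrak{S}(C^n)$, which is the easy direction: since $C^n \subseteq C^\infty$ for every $n$, any $k$-positive map $\phi \colon (\cc V, C^\infty, e) \to M_k$ is automatically $k$-positive on each $(\cc V, C^n, e)$, and unitality is a condition on $e$ alone, so it is preserved. This uses only the nestedness, not properness.

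For the reverse inclusion $\bigcap_n \mathfrak{S}(C^n) \subseteq \mathfrak{S}(C^\infty)$, suppose $\phi \in \mathfrak{S}(C^n)$ for every $n$; I must show $\phi_k(C^\infty) \subseteq M_k^+$ (equivalently $M_{k^2}^+$ after the usual identification $M_k(M_k) \cong M_{k^2}$). Take $x \in C^\infty$. By definition of the $k$-inductive limit (the Archimedean closure of $\bigcup_n C^n$), for each $\e > 0$ there is some $L = L(\e) \in \bb N$ with $x + \e\, I_k \otimes e \in C^{L}$. Applying $\phi_k$, which is $k$-positive on $C^L$ and unital, gives $\phi_k(x) + \e\, \phi_k(I_k \otimes e) = \phi_k(x) + \e\, I_k \otimes I_k \geq 0$ in $M_{k^2}$. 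Since this holds for all $\e > 0$ and $M_{k^2}^+$ is Archimedean closed (indeed closed), letting $\e \to 0$ yields $\phi_k(x) \geq 0$. Unitality of $\phi$ as a map out of $(\cc V, C^\infty, e)$ is inherited directly since $e$ is unchanged. Hence $\phi \in \mathfrak{S}(C^\infty)$.

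The main subtlety — really the only place any care is needed — is the bookkeeping in the inductive-limit definition: one must use that $\phi_k(I_k \otimes e) = I_{k^2}$ because $\phi$ is unital with values in $M_k$, and that the defining inequality $x + \e I_k \otimes e \in C^L$ passes through $\phi_k$ because $\phi$ is $k$-positive \emph{on that particular} $C^L$; the index $L$ depending on $\e$ is harmless since we only need the conclusion $\phi_k(x) + \e I_{k^2} \geq 0$ for each fixed $\e$. I do not expect to need properness of any of the cones here, nor the Archimedean order-unit hypotheses beyond the fact that each $(\cc V, C^n, e)$ is a genuine $k$-AOU space so that $\mathfrak{S}(C^n)$ is nonempty and well-behaved; Lemma \ref{lem: inductive limits of k-cones form a non-proper k-aou} guarantees $(\cc V, C^\infty, e)$ is at least a (possibly non-proper) $k$-AOU space, which is all that is required for $\mathfrak{S}(C^\infty)$ to make sense. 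So the proof is short: one trivial inclusion from nestedness, and one $\e$-argument using unitality plus closedness of the positive matrices.
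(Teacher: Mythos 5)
Your proof is correct and follows essentially the same route as the paper: the forward inclusion from nestedness of the cones, and the reverse inclusion via the $\e$-argument $x+\e I_k\otimes e\in C^L$, applying $\phi_k$ and letting $\e\to 0$ using closedness of $M_{k^2}^+$. No substantive differences.
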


\begin{proof}
Let $u \in \mathfrak S(C^\infty)$ and consider $x \in C^L$ for some $L \in \bb N$. Since $C^L \subseteq \bigcup_{n =1}^\infty C^n \subseteq  C^\infty$, then we have $u_k(x) \in M_{k^2}^+$. Thus $\mathfrak S(C^\infty) \subseteq \bigcap_{n=1}^\infty \mathfrak S(C^n).$ Conversely, if $u \in \bigcap_{n=1}^\infty \mathfrak S(C^n)$ then consider $x \in C^\infty$. Then, if $\epsilon >0$ there exists $L \in \bb N$ such that $\e I_k \otimes e + x \in C^L$. By assumption it follows $u \in \mathfrak S(C^L)$ and therefore $u_k(\e I_k \otimes e + x) = \e I_{k^2} + u_k(x) \in M_{k^2}^+$. Since this holds for all $\e >0$ we have $u_k(x) \in M_{k^2}^+$ which implies that $u \in \mathfrak S(C^{\infty})$, since $x \in C^{\infty}$ was arbitrary. So $\bigcap_{n=1}^\infty \mathfrak S(C^n) \subseteq \mathfrak S(C^\infty)$. \qedhere
\end{proof}

\begin{rmk}
    In the above Lemma, and in the following results, we leave open the possibility that the cone $C^{\infty}$ is non-proper. When this is the case, any element $x \in \cc V$ satisfying $\pm x \otimes I_k \in C^{\infty}$ must be a element of the kernel of any $\varphi \in \mathfrak S(C^{\infty})$. We note that even when $C^{\infty}$ is non-proper, it is still the case that $x \in (C^{\infty})_m^{k-\text{min}}$ if and only if $\varphi_m(x) \geq 0$ for each $\varphi \in \mathfrak S(C^{\infty})$. This can be seen by passing to the quotient space $\cc V / \cc J$, where $\cc J := \text{span}\{x : \pm x \otimes I_k \in C^{\infty}\}$ and noting that $y \in \cc J$ if and only if $\varphi(y)=0$ for every $\varphi \in \mathfrak S(C^{\infty})$. Alternatively, one can check that the arguments in the proofs of \cite[Propositions 3.7, 3.8, 3.13]{araiza2021matricial} to see that the result holds when $C$ fails to be proper.
\end{rmk}

Our next result is to prove the coincidence of two order structures. In particular, we consider, for $m \in \mathbb{N}$, the cones
\[ (C^{\infty})_{m}^{k-\text{min}} = ( \lim_{n \to \infty} C^{n} )_{m}^{k-\text{min}} \]
and the $m$-inductive limit $\lim_{n \to \infty} [(C^n)_{m}^{k-\text{min}}].$ To say $x \in  (C^{\infty} )_{m}^{k-\text{min}}$ is to say that for every unital $k$-positive map $\vp: (\cc V, C^\infty, e) \to M_k$, $\vp_m(x) \in M_{km}^+$ by Theorem \ref{thm: k-min characterization}. On the other hand, to say $x \in \lim_{n \to \infty} [(C^n)_{m}^{k-\text{min}}]$ implies for every $\e >0$ there exists $L \in \bb N$ such that $x + \e I_m \otimes e \in (C^L)_m^{k\text{-min}}.$ In other words, we prove that the $k$-inductive limit ``commutes'' with taking the $m$\textsuperscript{th}-cone of the $k$-minimal structure, where $m \in \bb N$. 

\begin{lem}\label{lem: inductive limit commutes with k-min structure}
Let $\cc V$ be a finite-dimensional $*$-vector space and assume $\{C^n\}_{n \in \bb N}$ is a nested increasing sequence of cones such that $(\mathcal{V}, C^n, e)$ is a $k$-AOU space for every $n \in \mathbb{N}$. Then
\[ ( \lim_{n \to \infty} C^{n} )_{m}^{k-\text{min}} = \lim_{n \to \infty} [(C^n)_{m}^{k-\text{min}}]. \]
\end{lem}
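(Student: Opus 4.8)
The plan is to prove the two cones coincide by a double inclusion, using the characterization of the $k$-minimal cones via unital $k$-positive maps into $M_k$ (Theorem \ref{thm: k-min characterization}) together with Lemma \ref{lem: states on inductive k-limit}, which identifies the state space of the $k$-inductive limit with the intersection of the state spaces. The easy inclusion is $\supseteq$: if $x \in \lim_{n \to \infty}[(C^n)_m^{k\text{-min}}]$, then for each $\e > 0$ there is $L$ with $x + \e I_m \otimes e \in (C^L)_m^{k\text{-min}}$; since $C^L \subseteq C^\infty$ and the $k$-minimal structure is monotone in the base cone, $x + \e I_m \otimes e \in (C^\infty)_m^{k\text{-min}}$, and then Archimedeanity of the $k$-minimal operator system $(\mathcal V, (C^\infty)^{k\text{-min}}, e)$ (it is an operator system by construction, or one can argue directly from Theorem \ref{thm: k-min characterization}(2) by letting $\e \to 0$ against each fixed $k$-positive $\vp$) gives $x \in (C^\infty)_m^{k\text{-min}}$.

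The substantive direction is $\subseteq$. Suppose $x \in (C^\infty)_m^{k\text{-min}}$, i.e. $\vp_m(x) \in M_{km}^+$ for every $\vp \in \mathfrak S(C^\infty)$. I want to produce, for each $\e>0$, some $L$ with $x + \e I_m \otimes e \in (C^L)_m^{k\text{-min}}$. Suppose not: then there is a fixed $\e_0 > 0$ such that for every $n$, $x + \e_0 I_m \otimes e \notin (C^n)_m^{k\text{-min}}$, so by Theorem \ref{thm: k-min characterization} there is a unital $k$-positive map $\vp_n : (\mathcal V, C^n, e) \to M_k$ with $(\vp_n)_m(x + \e_0 I_m \otimes e) = (\vp_n)_m(x) + \e_0 I_{km} \notin M_{km}^+$; equivalently, $(\vp_n)_m(x)$ has an eigenvalue $\le -\e_0$. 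Here is where finite-dimensionality of $\mathcal V$ enters: the set of unital $k$-positive maps from $\mathcal V$ into $M_k$, viewed inside the finite-dimensional space of linear maps $\mathcal V \to M_k$, is closed and (because $e$ is an order unit and these maps are unital, hence uniformly bounded) bounded, so it is compact; moreover $\mathfrak S(C^1) \supseteq \mathfrak S(C^2) \supseteq \cdots$ is a decreasing sequence of nonempty compact sets, and since $\vp_n \in \mathfrak S(C^n) \subseteq \mathfrak S(C^j)$ for all $j \le n$, a subsequence of $\{\vp_n\}$ converges to some $\vp_\infty$ lying in every $\mathfrak S(C^j)$, hence $\vp_\infty \in \bigcap_j \mathfrak S(C^j) = \mathfrak S(C^\infty)$ by Lemma \ref{lem: states on inductive k-limit}. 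Passing to the limit in the eigenvalue condition (eigenvalues depend continuously on the matrix, and $(\vp_n)_m(x) \to (\vp_\infty)_m(x)$), we get that $(\vp_\infty)_m(x)$ has an eigenvalue $\le -\e_0 < 0$, so $(\vp_\infty)_m(x) \notin M_{km}^+$, contradicting $x \in (C^\infty)_m^{k\text{-min}}$ together with $\vp_\infty \in \mathfrak S(C^\infty)$.

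The main obstacle is the compactness argument for the $k$-positive maps: one must check that $\mathfrak S(C^n)$ is genuinely compact (closedness is clear from the definition as an intersection of closed conditions; boundedness needs the order-unit bound $\norm{\vp} \le \norm{\vp(e)} = 1$-type estimate for unital positive maps on an AOU space applied at level $k$), that the sets are nested decreasingly, and that the limit map $\vp_\infty$ really is unital and $k$-positive on $C^\infty$ — the latter being exactly the content of Lemma \ref{lem: states on inductive k-limit}. A minor point to handle carefully is the reformulation ``$y \notin M_{km}^+$ with $y + \e_0 I \not\ge 0$'' as ``$y$ has an eigenvalue $\le -\e_0$'', and the continuity of this property under the convergence $\vp_n \to \vp_\infty$; this is routine since on the fixed finite-dimensional space $M_{km}$, the smallest eigenvalue is a continuous (indeed $1$-Lipschitz) function of the matrix. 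I would also remark that only finitely many of the $C^n$ actually matter in that the $k$-minimal closure of $C^\infty$ is already captured, but the compactness packaging above is the cleanest route. Everything else — closure under sums, scaling, the canonical shuffles — has already been dealt with in Lemmas \ref{lem: inductive limits of k-cones form a non-proper k-aou} and \ref{lem: states on inductive k-limit}.
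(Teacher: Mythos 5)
Your proposal is correct and follows essentially the same route as the paper: the easy inclusion via $\mathfrak S(C^\infty)=\bigcap_n\mathfrak S(C^n)$ and Theorem \ref{thm: k-min characterization}, and the hard inclusion by contradiction, extracting a convergent subsequence of witnessing maps $\vp^L\in\mathfrak S(C^L)$ from the compact set $\mathfrak S(C^1)$ and showing the limit lies in $\mathfrak S(C^\infty)$. Your final contradiction via continuity of the smallest eigenvalue (so that $(\vp_\infty)_m(x)$ inherits an eigenvalue $\le-\e_0$) is in fact a cleaner rendering of the paper's closing step, which reaches the same conclusion by arguing that the limit $P=\e I_{km}+\vp_m(x)$ of non-positive matrices must sit on the boundary of $M_{km}^+$.
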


\begin{proof}
Let $x \in \lim_{n \to \infty} [(C^n)_m^{k-\text{min}}]$ and let $\vp \in \mathfrak S(C^\infty).$ If $\e >0$ then there exists $L \in \bb N$ such that $\e I_m \otimes e + x \in (C^L)_m^{k-\text{min}}.$ By Lemma~\ref{lem: states on inductive k-limit} it follows that $\vp \in \mathfrak S(C^L)$ and thus $\vp_m(\e I_m \otimes e + x) = \e I_{km} + \vp_m(x) \in M_{km}^+$. Thus $\e I_{km} + \vp_m(x) \in M_{km}^+$ for all $\e >0$. This proves $\vp_m(x) \in M_{km}^+$, and thus $x \in (C^\infty)_m^{k-\text{min}}.$ 

Conversely, suppose $x \notin \lim_{n \to \infty} [(C^n)_m^{k-\text{min}}]$. Then there exists $\e > 0$ such that $\e I_m \otimes e + x \notin (C^L)_m^{k-\text{min}}$ for any $L \in \mathbb{N}$. Hence for each $L \in \mathbb{N}$ there exists $\varphi^L \in \mathfrak S(C^L)$ such that $\e I_{mk} + \varphi^L_m(x) = \varphi^L_m(\e I_m \otimes e + x) \notin M_{mk}^+$. Since $\mathfrak S(C^1)$ is weak*-compact and contains each set $\mathfrak S(C^L)$, there exists a weak* limit point $\varphi$ for the set of linear maps $\{ \varphi^L \}_{L=1}^\infty$ in $\mathfrak S(C^1)$. This limit point must satisfy $\e I_{mk} + \varphi_m(x) \notin M_{mk}^+$ and hence $\varphi_m(x) \notin M_{mk}^+$. However, we claim that $\varphi \in \mathfrak S(C^{\infty})$.  To see this, let $L \in \mathbb{N}$ and let $y \in C^L$.  Since $C^L \subseteq C^{L'}$ for all $L' \geq L$, it follows that $\varphi_k^{L'}(y) \geq 0$ for all $L' \geq L$. Since $\varphi$ is a weak* limit point, $\varphi_k(y) \geq 0$. Hence $\varphi \in \mathfrak S(C^L)$ for every $L \in \mathbb{N}$. By Lemma \ref{lem: states on inductive k-limit} we conclude that $\varphi \in \mathfrak S(C^{\infty})$. Therefore $x \notin (C^\infty)_m^{k-\text{min}}$ since $\varphi_m(x) \notin M_{mk}^+$. \qedhere
\end{proof}

We now define a $k$-AOU space structure on the $*$-vector space $\cc U_{\cc R}$. We will then show that the resulting $k$-AOU space satisfies a universal property for $k$-AOU spaces generated by projections satisfying $\mathcal{R}$.

\begin{defn}\label{defn: k-inductive limit}
Let $(\cc U_{\cc R},E_k,e)$ be the $k$-AOU space where $\cc U_{\cc R}$ is the universal $*$-vector space with generators $\{e,p_1,\dots,p_N\}$ satisfying relations $\cc R$, and $E_k:= E^{max}_k,$ where $E$ is as in Definition~\ref{defn: contraction cone}. Set $E^0 = E_k,$ and define
\[ E^n = E^{n-1}[p_i] \quad \text{ where } \quad i = n \text{ mod } N. \]
We let $E^{\text{proj}(k)} := E^{\infty}$ be the Archimedean closure of $\cup_n E^n$ (i.e. the $k$-inductive limit of $\{E^n\}$).
\end{defn}

\begin{thm}\label{thm: inductive limit is proper}
Let $\cc U_{\cc R}$ be the universal $*$-vector space with generators $\{e,p_1,\dotsc,p_N\}$ satisfying $\cc R$. Then $(\cc U_{\cc R}, E^{\text{proj}(k)}, e)$ is a $k$-AOU space whenever there exists a $k$-AOU space $(\cc V, \tilde{C}, f)$, with projections $\{q_1,\dotsc,q_N\}$ satisfying $\cc R,$ and such that $\cc V = \text{span} \{f, q_1, q_2, \dots, q_n\}$ and $\dim(\cc U_{\cc R}) = \dim(\cc V).$ Moreover, whenever $\mathcal{V}$ is a $k$-AOU space with unit $f$ and projections $\{q_1, q_2 \dots, q_n\}$ satisfying $\mathcal{R}$, then then the map $\pi: \mathcal{U}_{\mathcal{R}} \to \mathcal{V}$ defined by $\pi(e)=f$ and $\pi(p_i) = q_i$ for all $i \in [N]$ is a unital $k$-positive map.
\end{thm}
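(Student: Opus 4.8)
The structure mirrors the proof of Theorem~\ref{thm: universal projection system}, but with the $k$-AOU machinery developed in Lemmas~\ref{lem: alternate characterization of C[p]}--\ref{lem: inductive limit commutes with k-min structure} replacing the operator-system arguments. The key observation is that a $k$-positive map out of $(\mathcal{U}_{\mathcal{R}}, E^{\text{proj}(k)}, e)$ is, by the definition of the $k$-minimal structure and Theorem~\ref{thm: k-min characterization}, a complete order map out of $(\mathcal{U}_{\mathcal{R}}, (E^{\text{proj}(k)})^{k\text{-min}}, e)$, so I will first establish the universal property (the ``moreover'' clause) and then harvest properness from it. I would begin with the universal statement, proceeding by induction on $n$ to show that $\pi$ is $k$-positive from $(\mathcal{U}_{\mathcal{R}}, E^n, e)$ to $\mathcal{V}$ for every $n$. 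The base case $n=0$ is Theorem~\ref{thm: Universal contraction system} applied with $k$-positivity in place of complete positivity (equivalently, $\pi$ is positive on $E$, hence $k$-positive from $E_k = E^{\max}_k$, using the remark that complete positivity out of a $k$-max space is $k$-positivity). For the inductive step, suppose $\pi$ is $k$-positive on $E^{n-1}$ and let $p = p_i$ with $i \equiv n \bmod N$, so $E^n = E^{n-1}[p]$; write $q = q_i = \pi(p)$, which is a projection in $\mathcal{V}$, hence an abstract projection in the $k$-AOU sense by Corollary~\ref{cor: C=C[p] means proj in k-min} (its image in $C^*_e(\mathcal{V})$ is a genuine projection, so $\tilde C = \tilde C[q]$ — this point needs a short argument, see below). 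Given $x \in E^n$, unwind the definition of $E^{n-1}[p]$: for each $\epsilon > 0$ there is $t > 0$ with $(\alpha+\beta)^* x (\alpha+\beta) + (\epsilon\alpha^*\alpha + t\beta^*\beta)\otimes p + (t\alpha^*\alpha + \epsilon\beta^*\beta)\otimes p^{\perp} \in E^{n-1}$ for all $\alpha,\beta \in M_k$; apply $\pi_k$ (legitimate since the $*$-vector space map $\pi$ intertwines the $M_k$-actions and $\pi$ is $k$-positive on $E^{n-1}$), obtaining the analogous membership in $\tilde C$ with $q$ in place of $p$; this says exactly $\pi_k(x) \in \tilde C[q] = \tilde C$. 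Then $\pi$ is $k$-positive on $E^n$, completing the induction, and $k$-positivity on $E^{\text{proj}(k)} = \lim_n E^n$ follows because membership in the inductive limit is an $\epsilon I_k \otimes e$-Archimedean statement and $\pi$ is unital.

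For properness of $E^{\text{proj}(k)}$ — i.e. that $(\mathcal{U}_{\mathcal{R}}, E^{\text{proj}(k)}, e)$ is an honest $k$-AOU space — I first invoke Lemma~\ref{lem: inductive limits of k-cones form a non-proper k-aou} to know it is a (possibly non-proper) $k$-AOU space, so only $E^{\text{proj}(k)} \cap -E^{\text{proj}(k)} = \{0\}$ remains. Under the hypothesis that there exists a $k$-AOU space $(\mathcal{V}, \tilde C, f)$ with projections $\{q_1,\dots,q_N\}$ satisfying $\mathcal{R}$ and $\dim(\mathcal{V}) = \dim(\mathcal{U}_{\mathcal{R}})$, Proposition~\ref{prop: universal vector space} gives that $\pi$ is a linear \emph{bijection}, and the universal property just proved shows $\pi_k(E^{\text{proj}(k)}) \subseteq \tilde C$. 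Since $\tilde C$ is proper and $\pi_k$ is a linear isomorphism of $M_k(\mathcal{U}_{\mathcal{R}})_h$ onto $M_k(\mathcal{V})_h$, if $x \in E^{\text{proj}(k)} \cap -E^{\text{proj}(k)}$ then $\pi_k(x) \in \tilde C \cap -\tilde C = \{0\}$, so $x = 0$. Hence $E^{\text{proj}(k)}$ is proper and the first assertion follows.

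The main obstacle I anticipate is the point in the inductive step where I need $q = \pi(p)$ to be an abstract projection \emph{in the $k$-AOU sense} in $\mathcal{V}$, i.e. $\tilde C = \tilde C[q]$, as opposed to merely being a projection in $C^*_e(\mathcal{V})$. The hypothesis says $\{q_1,\dots,q_N\}$ are ``projections,'' which (consistent with the operator-system convention of the paper) I read as: their images in $C^*_e(\mathcal{V})$ are projections — that is, each $q_i$ is an abstract projection. One then needs the bridge ``abstract projection $\Rightarrow$ abstract projection in the $k$-AOU sense,'' which should follow from Corollary~\ref{cor: C=C[p] means proj in k-min} once one knows that the operator-system relation $\mathcal{D} = \mathcal{D}(q)$ (Theorem~\ref{thm: abstract projections}, applied in the C*-envelope's induced matrix order on $\mathcal{V}$, or in any operator system structure on $\mathcal{V}$ in which $q$ is a projection) passes to the $k$-minimal structure: concretely, if $q$ is a projection in some C*-cover, restrict that cover's matrix order, note it is an operator-system extension of $\tilde C$, apply Lemma~\ref{lem: alternate characterization of C[p]} together with the fact that $C^{k\text{-min}}$ is the \emph{largest} extension to compare $\tilde C[q]$ with $\tilde C$, and conclude equality. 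A slightly cleaner route, avoiding this altogether, is to apply $\pi_k$ landing not in $\tilde C$ but in $\mathcal{D}_{2k}^{k\text{-min}}$ for whatever operator system structure $\mathcal{D}$ on $\mathcal{V}$ makes $q$ a projection, use Lemma~\ref{lem: alternate characterization of C[p]} to recognize the resulting expression as placing $\pi_k(x) \in \mathcal{D}(q)_k = \mathcal{D}_k = \tilde C$ (the middle equality by Theorem~\ref{thm: abstract projections}, the last because $\mathcal{D}$ extends $\tilde C$); I would present this version. The remaining steps — verifying that $\pi_k$ commutes with the $M_k$-conjugations used in Definition of $C[p]$, and the Archimedean-closure bookkeeping for the inductive limit — are routine and I would dispatch them briefly.
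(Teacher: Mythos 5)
Your proposal is correct and follows essentially the same route as the paper: induction on $n$ with base case from Theorem~\ref{thm: Universal contraction system}, the inductive step obtained by applying $\pi_k$ to the defining condition of $E^{n-1}[p]$ to land in $\tilde C[q]=\tilde C$, Archimedean bookkeeping for the limit, and properness from injectivity of $\pi$ when the dimensions agree. The ``main obstacle'' you flag does not arise in the paper, which reads ``projections'' in the hypothesis as abstract projections in the $k$-AOU sense (its stated convention for $k$-AOU spaces), so $\tilde C=\tilde C[q_i]$ is given directly and no bridge from the operator-system notion is needed.
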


\begin{proof}
Suppose $(\cc V, \tilde{C}, f)$ is a $k$-AOU space with projections (in the $k$-AOU sense) $\{q_1,\dotsc,q_N\}$ satisfying $\cc R$. Endow this space with the $k$-minimal operator system structure $(\cc V, \tilde{C}^{\text{k-min}}, f)$. By Corollary \ref{cor: C=C[p] means proj in k-min} (see also \cite[Theorem 6.9]{araiza2021matricial}) we have that $\{q_1,\dotsc,q_N\}$ are abstract projections in the operator system $(\cc V, \tilde{C}^{k-min}, f).$ We claim $\pi(E^\infty) \subseteq \tilde{C}$. 

By Theorem~\ref{thm: Universal contraction system}, $\pi_k(E_k) \subseteq \tilde{C}_k^{k-\text{min}} = \tilde{C}$. We proceed by induction (recalling that $E_k = E^0$). Suppose that $\pi_k(E^{n-1}) \subseteq \tilde{C}$ for some $n \in \mathbb{N}$. Let $x \in E^n$. Then $x \in E^{n-1}[p]$ for some $p \in \{p_1, \dots, p_n\}$. So for each $\e>0$ there exists $t>0$ such that \[
(\a + \b)^*x(\a+\b) + [\e \a^*\a + t\b^*\b] \otimes p + [t\a^*\a + \e \b^*\b] \otimes p^\perp \in E^{n-1}
\] for all $\a,\b \in M_k$. We then have \[
(\a+\b)^*\pi_k(x)(\a+\b) + [\e \a^*\a + t\b^*\b] \otimes \pi(p) + [t\a^*\a + \e \b^*\b] \otimes \pi(p^\perp) \in \tilde{C}.
\] Thus, $\pi_k(x) \in \tilde{C}[q],$ where $q = \pi(p).$ Since $q$ is a projection in $(\cc V_{\cc R}, \tilde{C}, f)$, then $\pi_k(x) \in \tilde{C}[q] = \tilde{C},$ proving $\pi_k(E^n) \subseteq \tilde{C}.$ It follows that we have $\pi_k(\bigcup_{L \in \bb N} E^L) \subseteq \tilde{C}.$ If $x \in E^\infty$ then for every $\e>0$ there exists $L \in \bb N$ such that $x + \e I_k \otimes e \in E^L$. This implies $\pi_k(x) + \e I_k \otimes f \in \tilde{C}$ for all $\e>0$, and thus $\pi_k(x) \in \tilde{C}$. This proves $\pi_k(E^\infty) \subseteq \tilde{C}$. So $\pi$ is $k$-positive.

Now suppose also that $\mathcal{V} = \text{span} \{f, q_1, q_2, \dots, q_n\}$ and $\dim(\mathcal{V}) = \dim(\mathcal{U}_{\mathcal{R}})$. A consequence of this is that $E^\infty$ is proper, since $\tilde{C}$ is proper. One sees this by considering the map $\pi: \mathcal{U}_{\mathcal{R}} \to \mathcal{V}$. If $\pm x \in E^{\infty}$, then $\pm \pi(x) \in \tilde{C}$ and hence $\pi(x)=0$. But $\pi$ is injective since it is linear, maps generators to generators, and since $\dim(\mathcal{V}) = \dim(\mathcal{U}_{\mathcal{R}})$. So $x = 0$. Since $E^{\infty}$ is proper, $(\cc U_{\cc R}, E^{\infty}, e)$ is a $k$-AOU space.
\end{proof}

We conclude this section by showing that $p_1, p_2, \dots, p_N$ are abstract projections in the $k$-AOU space $(\cc U_{\cc R}, E^{\text{proj}(k)}, e)$.

\begin{thm}\label{thm: universal quantum k-aou}
Let $(\cc U_{\cc R}, E^{\text{proj}(k)}, e)$ be as in Theorem~\ref{thm: inductive limit is proper}. Then for each $p \in \{p_1,\dotsc,p_{N}\},$ it follows $E^{\text{proj}(k)}[p]= E^{\text{proj}(k)}.$ In other words, the positive contractions $\{p_1,\dotsc,p_N\} \subset \cc U_{\cc R}$ are abstract projections in the $k$-AOU space $(\cc U_{\cc R}, E^{\text{proj}(k)}, e).$
\end{thm}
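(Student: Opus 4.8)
The plan is to reduce the statement to Corollary \ref{cor: C=C[p] means proj in k-min}. Since $(\mathcal U_{\mathcal R},E^{\text{proj}(k)},e)$ is a $k$-AOU space by Theorem \ref{thm: inductive limit is proper} and each $p_i$ is a positive contraction, that corollary says $E^{\text{proj}(k)}[p]=E^{\text{proj}(k)}$ is equivalent to $p$ being an abstract projection, in the operator system sense, in $(\mathcal U_{\mathcal R},(E^{\text{proj}(k)})^{k\text{-min}},e)$; so I would fix $p=p_i$ and prove the latter. Set $m_j:=jN+i$ for $j\ge 0$, so that $E^{m_j}=E^{m_j-1}[p_i]=E^{m_j-1}[p]$ for every $j$, and both $\{E^{m_j-1}\}_j$ and $\{E^{m_j}\}_j$ are cofinal in $\{E^n\}$ and hence have $k$-inductive limit $E^{\text{proj}(k)}$. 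The obstruction flagged in the preamble of Section \ref{sec: universal k-relation AOU} is that one cannot simply argue that $p$ is an abstract projection in each $(\mathcal U_{\mathcal R},(E^{m_j})^{k\text{-min}},e)$, because by Corollary \ref{cor: C=C[p] means proj in k-min} this would require $E^{m_j}=E^{m_j}[p]$, i.e.\ that $[p]$ is idempotent on $E^{m_j-1}$, which is not known. The remedy is to work instead with the operator systems $\mathcal D^j:=(E^{m_j-1})^{k\text{-min}}(p)$, that is, the matrix ordering $\mathcal C(p)$ of Definition \ref{defn: C(p)} formed over the $k$-minimal operator system $(\mathcal U_{\mathcal R},(E^{m_j-1})^{k\text{-min}},e)$.

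I would first check that each $\mathcal D^j$ is an operator system in which $p$ is a genuine abstract projection. By the proof of Corollary \ref{cor: C=C[p] means proj in k-min}, $\mathcal D^j_k=(E^{m_j-1})^{k\text{-min}}(p)_k=E^{m_j-1}[p]=E^{m_j}$, which is a subcone of the proper cone $E^{\text{proj}(k)}$ (Theorem \ref{thm: inductive limit is proper}) and hence proper; since properness of a matrix ordering at level $k$ propagates to all matrix levels (conjugate $\pm x$ by elements of $M_{n,k}$ to land in $\mathcal D^j_k$), $\mathcal D^j$ is a proper matrix ordering and $(\mathcal U_{\mathcal R},\mathcal D^j,e)$ is an operator system. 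Proposition \ref{prop: p projection in C(p)} then applies (noting $0\le p\le e$ holds in $(E^{m_j-1})^{k\text{-min}}$, since $p,p^\perp\in E\subseteq E^0\subseteq E^{m_j-1}$) and shows that $p$ is an abstract projection in $(\mathcal U_{\mathcal R},\mathcal D^j,e)$ for every $j$.

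Next I would identify the inductive limit of $\{\mathcal D^j\}$. The sequence is nested increasing because $C\mapsto C^{k\text{-min}}$ and $\mathcal C\mapsto\mathcal C(p)$ are both monotone, and for every $n$ one has the sandwich
\[ (E^{m_j-1})_n^{k\text{-min}}\ \subseteq\ \mathcal D^j_n\ \subseteq\ (E^{m_j})_n^{k\text{-min}}, \]
the left inclusion being the general fact $\mathcal C_n\subseteq\mathcal C(p)_n$ and the right inclusion being the containment $C^{k\text{-min}}(p)_n\subseteq C[p]_n^{k\text{-min}}$ recorded in the proof of Corollary \ref{cor: C=C[p] means proj in k-min}, applied with $C=E^{m_j-1}$ and $C[p]=E^{m_j}$. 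Since $\mathcal U_{\mathcal R}$ is finite-dimensional and each $E^n$ is a proper $k$-AOU cone, Lemma \ref{lem: inductive limit commutes with k-min structure} applied to the cofinal sequences $\{E^{m_j-1}\}_j$ and $\{E^{m_j}\}_j$ shows that the $k$-inductive limits of the two outer sides both equal $(E^{\text{proj}(k)})_n^{k\text{-min}}$; squeezing, the $k$-inductive limit $\mathcal D^\infty$ of $\{\mathcal D^j\}$ satisfies $\mathcal D^\infty_n=(E^{\text{proj}(k)})_n^{k\text{-min}}$ for all $n$, i.e.\ $\mathcal D^\infty=(E^{\text{proj}(k)})^{k\text{-min}}$. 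Finally $\mathcal D^\infty$ is proper (its level-$k$ cone is $E^{\text{proj}(k)}$, which is proper, and this propagates to all levels), so Proposition \ref{prop: inductive limit projection} gives that $p$ is an abstract projection in $(\mathcal U_{\mathcal R},\mathcal D^\infty,e)=(\mathcal U_{\mathcal R},(E^{\text{proj}(k)})^{k\text{-min}},e)$, and Corollary \ref{cor: C=C[p] means proj in k-min} then yields $E^{\text{proj}(k)}[p]=E^{\text{proj}(k)}$. As $p\in\{p_1,\dots,p_N\}$ was arbitrary, this finishes the argument. The heart of the matter, and the only place the reasoning genuinely departs from that of Section \ref{sec: univeral projection system}, is exactly the non-idempotency issue: one passes through the auxiliary systems $\mathcal D^j$ — for which $p$ is a bona fide abstract projection — rather than through the ``unavailable'' $(E^{m_j})^{k\text{-min}}$, and then recovers the intended limit by the cofinality/squeeze argument resting on Lemma \ref{lem: inductive limit commutes with k-min structure}.
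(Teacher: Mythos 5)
Your argument is correct, and it takes a genuinely different route from the paper's. The paper proves the single inclusion $E^{\infty}[p]\subseteq E^{\infty}$ head-on: given $x\in E^{\infty}[p]$, it uses Lemma~\ref{lem: alternate characterization of C[p]} and Lemma~\ref{lem: inductive limit commutes with k-min structure} to drop to a finite stage $(E^{L})_{2k}^{k\text{-min}}$, then performs an explicit positive-completion trick (adding $\e\left(\begin{smallmatrix}\e&1\\1&r\end{smallmatrix}\right)\otimes p\otimes I_k$ and its $p^{\perp}$ counterpart) to absorb the error term into $x+\e I_k\otimes e\in E^{L}[p]$, and finally invokes monotonicity to choose $\widehat L$ with $E^{L}[p]\subseteq E^{\widehat L-1}[p]=E^{\widehat L}$, which is exactly how the paper handles the non-idempotency issue you isolate. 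You instead reduce the whole statement, via Corollary~\ref{cor: C=C[p] means proj in k-min}, to the operator-system assertion that $p$ is an abstract projection in $(\cc U_{\cc R},(E^{\text{proj}(k)})^{k\text{-min}},e)$, and then recycle the Section~\ref{sec: univeral projection system} machinery --- Proposition~\ref{prop: p projection in C(p)} applied to the auxiliary orderings $\mathcal D^{j}=(E^{m_j-1})^{k\text{-min}}(p)$, followed by Proposition~\ref{prop: inductive limit projection} --- identifying the limit $\mathcal D^{\infty}$ by the sandwich $(E^{m_j-1})^{k\text{-min}}_n\subseteq\mathcal D^{j}_n\subseteq(E^{m_j})^{k\text{-min}}_n$ together with cofinality and Lemma~\ref{lem: inductive limit commutes with k-min structure}. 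The two key identifications you need, $\mathcal D^{j}_k=E^{m_j-1}[p]=E^{m_j}$ and the right-hand sandwich inclusion, are both available from the proof of Corollary~\ref{cor: C=C[p] means proj in k-min}, and your properness-propagation and squeeze steps are sound. Both proofs lean on Lemma~\ref{lem: inductive limit commutes with k-min structure} as the essential technical input; yours is more modular (it delegates the $2\times 2$ matrix manipulation to the already-proved Proposition~\ref{prop: inductive limit projection} rather than redoing it in the $k$-AOU setting), at the mild cost of having to verify that each $(\cc U_{\cc R},\mathcal D^{j},e)$ is an operator system, which you correctly source from the properness of $E^{\text{proj}(k)}$ and the cited results on $\mathcal C(p)$.
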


\begin{proof}
By Theorem~\ref{thm: inductive limit is proper}, the triple $(\cc U_{\cc R}, E^{\text{proj}(k)}, e)$ is a $k$-AOU space. We claim that each $p \in \{p_1,\dotsc,p_N\}$ is an abstract projection in the $k$-AOU sense, i.e. $E^{\text{proj}(k)} = E^{\text{proj}(k)}[p]$.

 Given $p \in \{p_1, \dotsc, p_{N}\} \subset \cc U_{\cc R}$, we need only prove $E^\infty[p] \subseteq E^\infty.$ Let $x \in E^\infty[p]$, which by Lemma~\ref{lem: alternate characterization of C[p]} implies for all $\e>0$ there exists $t>0$ such that \[
\begin{pmatrix} x & x \\ x & x \end{pmatrix} + \epsilon \begin{pmatrix} p & 0 \\ 0 & p^{\perp} \end{pmatrix} \otimes I_k + t \begin{pmatrix} p^{\perp} & 0 \\ 0 & p \end{pmatrix} \otimes I_k \in (E^\infty)_{2k}^{k-\text{min}}.
\] Applying Lemma~\ref{lem: inductive limit commutes with k-min structure} we have for all $\e>0$ there exists $t>0$ such that 
\begin{equation}
\begin{pmatrix} x & x \\ x & x \end{pmatrix} + \epsilon \begin{pmatrix} p & 0 \\ 0 & p^{\perp} \end{pmatrix} \otimes I_k + t \begin{pmatrix} p^{\perp} & 0 \\ 0 & p \end{pmatrix} \otimes I_k \in \lim_{n \to \infty}[(E^n)_{2k}^{k-\text{min}}] 
\nonumber
\end{equation} Let $\e \in (0,1)$. Then there exists $\hat t >0$ and $L \in \bb N$ such that \[
\begin{pmatrix} x & x \\ x & x \end{pmatrix} + (\epsilon-\e^2) \begin{pmatrix} p & 0 \\ 0 & p^{\perp} \end{pmatrix} \otimes I_k + \hat t \begin{pmatrix} p^{\perp} & 0 \\ 0 & p \end{pmatrix} \otimes I_k \in (E^L)_{2k}^{k-\text{min}}.
\] Choose $r>0$ such that  $\begin{pmatrix}
\e & 1 \\
1 & r
\end{pmatrix} \in M_2^+$. It then follows that \[
\begin{pmatrix} x & x \\ x & x \end{pmatrix} + (\epsilon-\e^2) \begin{pmatrix} p & 0 \\ 0 & p^{\perp} \end{pmatrix} \otimes I_k + \hat t \begin{pmatrix} p^{\perp} & 0 \\ 0 & p \end{pmatrix} \otimes I_k + \e \begin{pmatrix}
\e & 1 \\
1 & r
\end{pmatrix} \otimes p \otimes I_k + \e \begin{pmatrix}
r & 1 \\
1 & \e
\end{pmatrix} \otimes p^{\perp} \otimes I_k  \in (E^L)_{2k}^{k-\text{min}}.
\] At this point we condense the terms and arrive at \[
\begin{pmatrix}
x + \e I_k \otimes e & x + \e I_k \otimes e \\
x + \e I_k \otimes e & x + \e I_k \otimes e
\end{pmatrix} + s I_k \otimes \begin{pmatrix}
p^\perp & 0 \\
0 & p
\end{pmatrix} \in (E^L)_{2k}^{k-\text{min}},
\] where $s= \e r + \hat{t} -\e.$ This implies for each $\delta >0$ we have \[
\begin{pmatrix}
x + \e I_k \otimes e & x + \e I_k \otimes e \\
x + \e I_k \otimes e & x + \e I_k \otimes e
\end{pmatrix} + \delta I_k \otimes \begin{pmatrix}
p & 0 \\
0 & p^\perp
\end{pmatrix} + s I_k \otimes \begin{pmatrix}
p^\perp & 0 \\
0 & p
\end{pmatrix} \in (E^L)_{2k}^{k-\text{min}},
\] and therefore $x + \e I_k \otimes e \in E^L[p].$ By the definition of the cones $E^n$, it follows $x + \e I_k \otimes e \in E^{\widehat{L}}$ for some $\widehat{L} > L$. In particular, since $p = p_i$ for some $i \in [N]$, and since the family $\{E^n\}_{n \in \bb N}$ forms a nested increasing sequence of cones, we may choose $\widehat{L} \in \bb N$ such that $i = \widehat{L} \mod N$. Then $E^L[p] \subseteq E^{\widehat{L}-1}[p] = E^{\widehat{L}}.$ This proves $x \in E^\infty$. \qedhere
\end{proof}

\section{Dual hierarchy for quantum correlations}
We now wish to apply our results to operator systems whose generators correspond to correlations. We recall some facts concerning correlations. Let $n,m \in \mathbb{N}$. We call a tuple $p = \{p(a,b|x,y) : a,b \in [m], x,y \in [n]\}$ a \textbf{correlation} with $n$ inputs and $m$ outputs if for each $a,b \in [m]$ and $x,y \in [n]$, $p(a,b|x,y)$ is a non-negative real number, and for each $x,y \in [n]$ we have \[ \sum_{a,b = 1}^m p(a,b|x,y) = 1. \] We let $C(n,m)$ denote the set of all correlations with $n$ inputs and $m$ outputs. A correlation $p$ is called \textbf{nonsignalling} if for each $a,b \in [m]$ and $x,y \in [n]$ the values \[ p_A(a|x) := \sum_d p(a,d|x,w) \quad \text{ and } \quad p_B(b|y) := \sum_c p(c,b|z,y) \] are well-defined, meaning that $p_A(a|x)$ is independent of the choice of $w \in [n]$ and $p_B(b|y)$ is independent of the choice of $z \in [n]$. We let $C_{ns}(n,m)$ denote the set of all nonsignalling correlations with $n$ inputs and $m$ outputs.

Much of the literature on correlation sets is focused on various subsets of the nonsignalling correlation sets. We mention three of these subsets here, namely the quantum commuting, quantum, and local correlations. A correlation $p$ is a \textbf{quantum commuting} correlation with $n$ inputs and $m$ outputs if there exists a Hilbert space $H$, a pair of C*-algebras $\mathcal{A}, \mathcal{B} \subseteq B(H)$ with $z_1z_2=z_2z_1$ for all $z_1 \in \mathcal{A}$ and $z_2 \in \mathcal{B}$, projection-valued measures $\{E_{x,a}\}_{a=1}^m \subseteq \mathcal{A}$ and $\{F_{y,b}\}_{b=1}^m \subseteq \mathcal{B}$ for each $x,y \in [n]$, and a state $\phi: \mathcal{A} \mathcal{B} \to \mathbb{C}$ such that $p(a,b|x,y) = \phi(E_{x,a} F_{y,b})$ for all $a,b \in [m]$ and $x,y \in [n]$. A quantum commuting correlation is called a \textbf{quantum} correlation if we require the Hilbert space $H$ to be finite-dimensional. A quantum commuting correlation is called \textbf{local} if we require that the C*-algebras $\mathcal{A}$ and $\mathcal{B}$ are commutative. We let $C_{qc}(n,m), C_{q}(n,m)$, and $C_{loc}(n,m)$ denote the sets of quantum commuting, quantum, and local correlations, respectively. We let $C_{qa}(n,m) := \overline{C_q(n,m)}$, and say such correlations are \textbf{quantum approximate}.

It is well-known that for each input-output pair $(n,m)$ each of the correlation sets mentioned above are convex subsets of $\mathbb{R}^{n^2 m^2}$ and satisfy
\[ C_{loc}(n,m) \subseteq C_{q}(n,m) \subseteq C_{qa}(n,m) \subseteq C_{qc}(n,m) \subseteq C_{ns}(n,m) \subseteq C(n,m). \]
Moreover, each inclusion in the above sequence is proper for some choice of input $n$ and output $m$. The fact that local correlations are a proper subset of quantum correlations goes back to John Bell \cite{bell1964einstein}. The proper inclusion of the quantum correlations inside the quantum approximate correlations was due to Slofstra \cite{slofstra2019set}, and that quantum approximate correlations are a proper subset of the quantum commuting correlations is due to Ji-Natarajan-Vidick-Wright-Yuen \cite{ji2020mip}.

We recall the following definitions from \cite{araiza2020abstract} and \cite{araiza2021matricial}.

\begin{defn} \label{defn: qc operator system}
Let $n, m \in \N$. We call a pair $(\mathcal{V}, \{Q(a,b|x,y)\}_{a,b \in [m], x,y \in [n]})$ a \textbf{nonsignalling} vector space on $n$ inputs and $m$ outputs if $\mathcal V$ is a vector space spanned by vectors $\{Q(a,b|x,y): a,b \in [m], x,y \in [n] \}$ satisfying \[ \sum_{a,b=1}^m Q(a,b|x,y) = e \] for some fixed nonzero vector $e$, which we call the \textbf{unit} of $\mathcal{V}$, and such that the vectors \[ E(a|x) := \sum_{c=1}^m Q(a,c|x,z) \quad \text{ and } \quad F(b|y) := \sum_{d=1}^m Q(d,b|w,y) \] are well-defined. When the vectors $Q(a,b|x,y)$ are clear from context, we simply call $\mathcal{V}$ a nonsignalling vector space. When $\mathcal{V}$ is nonsignalling, we write $n(\mathcal{V})$ and $m(\mathcal{V})$ for the number of inputs and for the number of outputs, respectively; i.e., $\mathcal{V} = \Span \{Q(a,b|x,y) : a,b \in [m(\mathcal{V})], x,y \in [n(\mathcal{V})] \}.$ 

A \textbf{nonsignalling operator system} is an operator system structure $(\mathcal{V}, \mathcal{C}, e)$ on a non-signalling vector space $\mathcal{V} = \Span \{Q(a,b|x,y)\}_{a,b \in [m]; x,y \in [n]}$ where $Q(a,b|x,y) \in \mathcal C_1$ for each $a,b \in [m]$ and $x,y \in [n]$. We call $\mathcal{V}$ a \textbf{quantum commuting operator system} if it is a nonsignalling operator system with the additional condition that each $Q(a,b|x,y)$ is an abstract projection in $(\mathcal{V}, \mathcal{C}, e)$.

A \textbf{quantum $k$-AOU space} is a $k$-AOU space structure $(\cc V, C, e)$ on a nonsignalling vector space $\cc V = \Span\{Q(a,b|x,y): a,b \in [m], x,y \in [n]\}$ with the additional condition that each $Q(a,b|x,y)$ is an abstract projection in the $k$-AOU sense.
\end{defn}

If $(\cc V, \cc C, e)$ is a quantum commuting operator system which is $k$-minimal, then $(\cc V , \cc C_k, e)$ is a quantum $k$-AOU space (by Corollary \ref{cor: C=C[p] means proj in k-min}). Therefore for any $k \in \mathbb{N}$ we call a $k$-minimal quantum commuting operator system a \textbf{quantum operator system}.

\begin{thm}[{\cite[Theorem 6.3]{araiza2020abstract}}, {\cite[Theorem 7.4]{araiza2021matricial}}] \label{thm: NS and QC operator systems}
A correlation $p \in C(n,m)$ is nonsignalling (resp. quantum commuting, quantum) if and only if there exists a nonsignalling (resp. quantum commuting, quantum) operator system $\mathcal{V}$ with generators $\{Q(a,b|x,y); a,b \in [m], x,y \in [n] \}$ and a state $\phi$ on $\mathcal{V}$ such that 
\[ p(a,b|x,y) = \phi(Q(a,b|x,y)) \]
for each $a,b \in [m]$ and $x,y \in [n]$.
\end{thm}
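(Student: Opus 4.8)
The plan is to prove the two implications separately, turning abstract data into concrete operators via the C*-envelope and, for the quantum case, controlling dimension through the $k$-minimality results recalled in Section~\ref{sec: preliminaries}. I expect the implication ``operator system plus state $\Rightarrow$ correlation'' to be routine at the nonsignalling level but to carry the real content at the quantum commuting level, whereas the reverse implication should be straightforward --- a concrete model in the quantum commuting and quantum cases, a universal construction in the nonsignalling case.

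First I would handle ``operator system $\Rightarrow$ correlation''. Given a nonsignalling operator system $(\mathcal V,\mathcal C,e)$ with generators $Q(a,b|x,y)$ and a state $\phi$, set $p(a,b|x,y):=\phi(Q(a,b|x,y))$; then $p(a,b|x,y)\ge 0$ since $Q(a,b|x,y)\in\mathcal C_1$ and $\phi$ is positive, $\sum_{a,b}p(a,b|x,y)=\phi(e)=1$ by unitality, and $p_A(a|x)=\phi(E(a|x))$ is well defined because $E(a|x)=\sum_c Q(a,c|x,z)$ is, so $p\in C_{ns}(n,m)$. If moreover each $Q(a,b|x,y)$ is an abstract projection, I would extend $\phi$ by Hahn--Banach to a state $\widetilde\phi$ on $C^*_e(\mathcal V)$ and write $\widehat{(\,\cdot\,)}$ for images in $C^*_e(\mathcal V)$; for fixed $x,y$ the projections $\{\widehat Q(a,b|x,y)\}_{a,b}$ sum to the unit, hence are mutually orthogonal (a finite family of projections summing to the identity is automatically orthogonal), so $\widehat E(a|x)=\sum_c\widehat Q(a,c|x,y)$ and $\widehat F(b|y)=\sum_d\widehat Q(d,b|x,y)$ are projection-valued measures and orthogonality gives $\widehat E(a|x)\widehat F(b|y)=\widehat Q(a,b|x,y)=\widehat F(b|y)\widehat E(a|x)$ for all $a,b,x,y$; then $\mathcal A:=C^*(\{\widehat E(a|x)\})$ and $\mathcal B:=C^*(\{\widehat F(b|y)\})$ commute, generate $C^*_e(\mathcal V)$, and $p(a,b|x,y)=\widetilde\phi(\widehat E(a|x)\widehat F(b|y))$ is quantum commuting. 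For the quantum case $\mathcal V$ is additionally $k$-minimal for some $k$, so by Theorem~\ref{thm: k-min characterization} it completely order embeds into a direct product of copies of $M_k$; hence $C^*_e(\mathcal V)$, being a quotient of a C*-subalgebra of such a product, is $k$-subhomogeneous, and every pure state of it has GNS dimension at most $k$. Decomposing $\widetilde\phi$ into pure states then writes $p$ as the barycenter of a probability measure on $\bigcup_{d\le k}C_q^{(d)}(n,m)$, where $C_q^{(d)}(n,m)$ denotes the correlations realizable on a $d$-dimensional Hilbert space; this set is compact (a finite union of continuous images of compact parameter spaces), so since $C(n,m)\subseteq\mathbb{R}^{n^2m^2}$ I can apply Carath\'eodory to rewrite $p$ as a finite convex combination of such correlations, which a block-diagonal model realizes in finite dimension, giving $p\in C_q(n,m)$.

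For the converse, given $p\in C_{qc}(n,m)$ I would take commuting C*-algebras $\mathcal A,\mathcal B\subseteq B(H)$, projection-valued measures $\{E_{x,a}\}_a\subseteq\mathcal A$ and $\{F_{y,b}\}_b\subseteq\mathcal B$, and a state $\psi$ with $p(a,b|x,y)=\psi(E_{x,a}F_{y,b})$, set $\widehat Q(a,b|x,y):=E_{x,a}F_{y,b}$ (a product of commuting projections, hence a projection), and let $\mathcal V:=\operatorname{span}\{I_H,\widehat Q(a,b|x,y):a,b\in[m],\,x,y\in[n]\}\subseteq B(H)$ with the inherited operator system structure and unit $I_H$. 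From $\sum_a E_{x,a}=I_H=\sum_b F_{y,b}$ one gets $\sum_{a,b}\widehat Q(a,b|x,y)=I_H$ and well-defined marginals $\sum_b\widehat Q(a,b|x,y)=E_{x,a}$, $\sum_a\widehat Q(a,b|x,y)=F_{y,b}$, so $\mathcal V$ is a nonsignalling operator system, and each $\widehat Q(a,b|x,y)$ is a genuine projection in the C*-cover $C^*(\mathcal V)\subseteq B(H)$, hence an abstract projection in $\mathcal V$, making $\mathcal V$ a quantum commuting operator system; the restriction $\psi|_{\mathcal V}$ is the required state. If $p\in C_q(n,m)$ I take $\dim H<\infty$, so $\mathcal V$ is an operator subsystem of a matrix algebra and thus $k$-minimal for suitable $k$, i.e.\ a quantum operator system. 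If $p\in C_{ns}(n,m)$ merely, I would let $\mathcal R$ be the nonsignalling relations on symbols $\{Q(a,b|x,y)\}$ (the normalizations $\sum_{a,b}Q(a,b|x,y)=e$ together with the marginal identities), form $\mathcal U_{\mathcal R}$ with its maximal operator system structure $(\mathcal U_{\mathcal R},E^{\mathrm{max}},e)$ as in Section~\ref{sec: universal relation vector space} --- a genuine operator system by Theorem~\ref{thm: Universal contraction system}, since $\mathcal R$ admits a full-dimensional concrete model, e.g.\ from local correlations --- and observe that $p$ descends to a functional on $\mathcal U_{\mathcal R}$ which is positive on $\operatorname{cone}(e,\{Q(a,b|x,y)\},\{Q(a,b|x,y)^{\perp}\})$ because $0\le p(a,b|x,y)\le 1$, hence a state on the nonsignalling operator system $(\mathcal U_{\mathcal R},E^{\mathrm{max}},e)$.

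The step I expect to be hardest is the quantum commuting half of ``operator system $\Rightarrow$ correlation'': manufacturing an honest pair of commuting C*-algebras carrying commuting projection-valued measures out of nothing but the abstract hypothesis that the $Q(a,b|x,y)$ are abstract projections. The key that makes it go through is the elementary observation that a finite family of projections summing to the identity is automatically mutually orthogonal, which promotes the $\widehat Q(a,b|x,y)$ to projection-valued measures in $C^*_e(\mathcal V)$ and forces the factorization $\widehat Q(a,b|x,y)=\widehat E(a|x)\widehat F(b|y)$ with commuting factors. A secondary difficulty, particular to the quantum case, is landing inside $C_q(n,m)$ rather than only in its closure $C_{qa}(n,m)$; this is precisely where $k$-minimality is used, via $k$-subhomogeneity of the C*-envelope and a Carath\'eodory reduction to finitely many finite-dimensional models.
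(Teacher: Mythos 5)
This theorem is quoted from \cite[Theorem 6.3]{araiza2020abstract} and \cite[Theorem 7.4]{araiza2021matricial} and is not reproved in the present paper, so there is no internal proof to compare against; your reconstruction is correct and follows the route of the cited sources. The two load-bearing steps --- orthogonality of a finite family of projections summing to the identity, which yields the commuting factorization $\widehat Q(a,b|x,y)=\widehat E(a|x)\widehat F(b|y)$ into PVMs in $C^*_e(\mathcal V)$, and $k$-subhomogeneity of $C^*_e(\mathcal V)$ for a $k$-minimal system followed by the Choquet/Carath\'eodory reduction to finitely many finite-dimensional pure-state models --- are exactly the ones needed, and both are carried out correctly.
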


Section~\ref{sec: univeral projection system} and Section~\ref{sec: universal k-relation AOU} pertained to constructing universal operator systems spanned by projections satisfying relations $\cc R,$ and its $k$-AOU space counterpart, respectively. In order to guarantee properness of the inductive limit matricial ordering, or $k$-inductive limit as in Section~\ref{sec: universal k-relation AOU}, we assumed that there existed a similar object, satisfying the relations in question, and such that the corresponding $*$-vector spaces had equal dimension. We recall Example 5.5 from \cite{araiza2021universal}.

\begin{ex} \label{ex: nonsignalling ex}
Let $n,m \in \mathbb{N}$. Consider the $*$-vector space \begin{align*}
\cc V:= \span \{Q(a,b|x,y)\}_{x,y \in [n], a,b \in [m]}, \quad Q(a,b|x,y):= I_m^{\otimes x -1} \otimes E_a \otimes I_m^{\otimes n-x} \otimes I_m^{\otimes y-1} \otimes E_b \otimes I_m^{\otimes n-y} \in D_m^{\otimes n^2}, 
\end{align*} where $E_a \in M_m$ denotes the diagonal $m \times m$ matrix with a 1 in the $a$\textsuperscript{th} entry and zeroes elsewhere, $I_m^{\otimes n}$ denotes the n-fold tensor product of the $m \times m$ identity matrix with itself, with the understanding $I_m^o = 1.$ Then $\cc V$ is a nonsignalling vector space. As shown in {\cite[Example 5.5]{araiza2021universal}}, $\dim(\cc V) = (n(m-1)+1)^2$.
\end{ex}

Let $n,m \in \mathbb{N}$. For the remainder of this section, let $\mathcal{R}$ denote the nonsignalling relations on the vectors $e$ and $\{P(a,b|x,y)\}_{a,b \in [m], x,y \in [n]}$. Specifically, $\mathcal{R}$ includes the following relations: for each $x,y$, let $r_1^{x,y}$ denote the relation
\[ \sum_{a,b=1}^m P(a,b|x,y) = e, \]
for each $a \in [m]$ and $x,y,z \in [n]$, let $r_2^{a,x,y,z}$ and $r_3^{a,x,y,z}$ denote the relations
\[ \sum_{b=1}^m P(a,b|x,y) = \sum_{b=1}^m P(a,b|x,z) \quad \text{and} \quad \sum_{b=1}^m P(b,a|y,x) = \sum_{b=1}^m P(b,a|z,x) \]
respectively.

Let $\cc V_{ns} := \cc U_{\cc R}$. We leave it to the reader to verify that $\dim(\cc V_{ns}) = (n(m-1)+1)^2$. In light of Example \ref{ex: nonsignalling ex} and the results of Sections 3, 4, and 5, we have the following results.

\begin{cor} \label{cor: ns,q,qc universal systems}
Suppose that $\mathcal{V} = \{Q(a,b|x,y)\}_{a,b \in [m], x,y \in [n]}$ is a non-signalling vector space. Then the map $\pi: \cc V_{ns} \to \cc V$ defined by $\pi(P(a,b|x,y)) = Q(a,b|x,y)$ is a well-defined linear map. Moreover:
\begin{enumerate}
    \item If $(\cc V, \mathcal{C}, e)$ is a nonsignalling operator system, then $\pi$ is completely positive on the nonsignalling operator system $(\cc V_{ns}, E^{\text{max}}, e)$.
    \item If $(\cc V, \mathcal{C}, e)$ is a quantum commuting operator system, then $\pi$ is completely positive on the quantum commuting operator system $(\cc V_{ns}, \mathcal{E}^{\text{proj}}, e)$.
    \item If $(\cc V, \cc C, e)$ is a $k$-minimal quantum operator system for some $k \in \mathbb{N}$, then $\pi$ is completely positive on the ($k$-minimal) quantum operator system $(\mathcal{V}_{ns}, (E^{\text{proj}(k)})^{k-\text{min}}, e)$.
\end{enumerate}
\end{cor}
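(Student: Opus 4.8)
The plan is to read off the three statements from the universal constructions of Sections~\ref{sec: universal relation vector space}, \ref{sec: univeral projection system}, and~\ref{sec: universal k-relation AOU}, using the concrete nonsignalling system of Example~\ref{ex: nonsignalling ex} as the dimension-matching witness needed to force properness of the orderings. The first observation is that the defining conditions of a nonsignalling vector space---the relation $\sum_{a,b}Q(a,b|x,y)=e$ together with the well-definedness of the marginals $E(a|x)$ and $F(b|y)$---are exactly the relations $\mathcal{R}$; hence $\{e\}\cup\{Q(a,b|x,y)\}$ satisfies $\mathcal{R}$, so $\pi$ is a well-defined linear map by Proposition~\ref{prop: universal vector space}. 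Throughout we use that $\dim(\cc V_{ns})=(n(m-1)+1)^2$, which equals the dimension of the system in Example~\ref{ex: nonsignalling ex}. For (1): if $(\cc V,\mathcal{C},e)$ is a nonsignalling operator system then each $Q(a,b|x,y)$ lies in $\mathcal{C}_1$ and, since these sum to $e$, each is a positive contraction; Proposition~\ref{prop: Universal AOU space} applied to $\cc V$ shows that $\pi$ is positive, while the same proposition applied to Example~\ref{ex: nonsignalling ex} (matching dimension) shows that $(\cc V_{ns},E,e)$ is an AOU space, and then $\pi$ is completely positive on $(\cc V_{ns},E^{\text{max}},e)$ by the universal property of the maximal structure (Theorem~\ref{thm: cmax universal property}, equivalently Theorem~\ref{thm: Universal contraction system}).

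For (2): the system of Example~\ref{ex: nonsignalling ex} is commutative and spanned by its unit together with genuine projections satisfying $\mathcal{R}$, hence is a quantum commuting operator system of the matching dimension, so Theorem~\ref{thm: universal projection system} gives that $\mathcal{E}^{\text{proj}}$ is proper and that the generators of $\cc V_{ns}$ are abstract projections in $(\cc V_{ns},\mathcal{E}^{\text{proj}},e)$---in particular this is a quantum commuting operator system. For complete positivity of $\pi$ into an \emph{arbitrary} quantum commuting operator system $\cc V$, I would rerun the induction from the proof of Theorem~\ref{thm: universal projection system}: $\pi$ is completely positive on $\mathcal{E}^0=E^{\text{max}}$ by part (1), and the step from $\mathcal{E}^k$ to $\mathcal{E}^{k+1}=\mathcal{E}^k(p_{k+1})$ uses only that $\pi(p_{k+1})$ is an abstract projection in $\cc V$ (so that $\mathcal{C}_\cc V=\mathcal{C}_\cc V(\pi(p_{k+1}))$ by Theorem~\ref{thm: abstract projections}); passing to the inductive limit gives complete positivity on $\mathcal{E}^{\text{proj}}$. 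The dimension hypothesis of that theorem is used only for properness and injectivity, not for this induction.

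For (3): equipping the system of Example~\ref{ex: nonsignalling ex} with the $k$-minimal operator system structure built from its $k$-th cone produces a quantum $k$-AOU space of the matching dimension, its generators being abstract projections in the $k$-AOU sense by Corollary~\ref{cor: C=C[p] means proj in k-min} (they are genuine projections in the commutative C*-algebra $D_m^{\otimes n^2}$, whose natural structure is $k$-minimal). Theorem~\ref{thm: inductive limit is proper} then gives that $E^{\text{proj}(k)}$ is a proper $k$-AOU cone, and Theorem~\ref{thm: universal quantum k-aou} that the generators of $\cc V_{ns}$ are abstract projections in the $k$-AOU sense; combined with Corollary~\ref{cor: C=C[p] means proj in k-min} this makes $(\cc V_{ns},(E^{\text{proj}(k)})^{k-\text{min}},e)$ a $k$-minimal quantum operator system. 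If $(\cc V,\mathcal{C},e)$ is any $k$-minimal quantum operator system, then $(\cc V,\mathcal{C}_k,e)$ is a quantum $k$-AOU space, so the ``moreover'' clause of Theorem~\ref{thm: inductive limit is proper} makes $\pi$ $k$-positive into it, i.e.\ $\pi_k(E^{\text{proj}(k)})\subseteq\mathcal{C}_k$; since the codomain is $k$-minimal, $k$-positivity upgrades to complete positivity, as observed in Section~\ref{sec: preliminaries}.

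I expect the main difficulty to be organizational rather than computational: one must keep separate the properness of the inductive-limit orderings---which genuinely requires the dimension-matching witness of Example~\ref{ex: nonsignalling ex}---from the universal mapping property---which holds for every target of the appropriate type, via the inductive complete-positivity argument that does not see the dimension hypothesis. One must also check that the concrete commutative system of Example~\ref{ex: nonsignalling ex} really does meet the projection and $k$-AOU hypotheses needed to invoke Theorems~\ref{thm: universal projection system}, \ref{thm: inductive limit is proper}, and~\ref{thm: universal quantum k-aou}, and that $\dim(\cc V_{ns})=(n(m-1)+1)^2$; the latter is routine linear algebra on the coefficient matrix $M_{\mathcal{R}}$ of the nonsignalling relations.
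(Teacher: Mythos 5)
Your proposal is correct and follows essentially the same route as the paper, which states this corollary without proof as an immediate consequence of Example~\ref{ex: nonsignalling ex} together with Theorems~\ref{thm: Universal contraction system}, \ref{thm: universal projection system}, \ref{thm: inductive limit is proper}, and~\ref{thm: universal quantum k-aou}. You also correctly isolate the one point the paper leaves implicit: the dimension-matching hypothesis in those theorems is used only to establish properness of the universal orderings (via the commutative witness of Example~\ref{ex: nonsignalling ex}), while the inductive complete-positivity argument giving the universal mapping property works for an arbitrary target of the appropriate type.
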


Analogous with results in \cite{araiza2020abstract,araiza2021universal,araiza2021matricial}, we provide a characterization of various nonsignalling correlations as images of unital linear functionals on the universal nonsignalling vector space. In the following theorem, we let $D_{ns} := E$ and $D_{qc} := \mathcal{E}^{\text{proj}}_1$. Let $k \in \mathbb N$. As proven in \cite[Corollary 3.18]{araiza2021matricial}, given any extension $\mathcal E$ of $E^{\text{proj}(k)}$, the first $k$ cones $\mathcal E_1,\dotsc, \mathcal E_{k}$, are uniquely determined by the cone $E^{\text{proj}(k)} \subseteq M_k(\cc V_{ns})$. In particular, any two extensions yield the same order structure up to level $k$. Thus, we unambiguously define the cone \begin{align} \nonumber
D_{q(k)}:=(E^{\text{proj}(k)})_1.
\end{align}
Finally, we define $D_{qa} = \cap_{k=1}^\infty D_{q(k)}$.

\begin{thm} \label{thm: hierarchy AOU spaces}
Let $\mathcal{V}_{ns}$ be the universal nonsignalling vector space with generators $\{P_{ns}(a,b|x,y): x,y \in [n], a,b \in [m]\}$, and let $\varphi: \mathcal{V}_{ns} \to \mathbb{C}$ be a unital linear functional. Let $p(a,b|x,y) := \varphi(P_{ns}(a,b|x,y))$ for all $a,b \in [m]$ and $x,y \in [n]$. Then the following statements are true.
\begin{enumerate}
    \item $p \in C_q(n,m)$ if and only if $\varphi({D}_{q(k)}) \geq 0$ for some $k \in \mathbb{N}$.
    \item $p \in C_{qa}(n,m)$ if and only if $\varphi(D_{qa}) \geq 0$.
    \item $p \in C_{qc}(n,m)$ if and only if $\varphi({D}_{qc}) \geq 0$.
    \item $p \in C_{ns}(n,m)$ if and only if $\varphi({D}_{ns}) \geq 0$.
\end{enumerate}
\end{thm}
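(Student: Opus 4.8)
The theorem gives four equivalences, one for each correlation set in the hierarchy, each relating membership of $p$ in a correlation set to positivity of $\varphi$ on a distinguished cone in $\mathcal{V}_{ns}$. The unifying strategy is to use Theorem~\ref{thm: NS and QC operator systems} together with the universal properties established in Corollary~\ref{cor: ns,q,qc universal systems}: in each case, positivity of $\varphi$ on the relevant cone should be equivalent to $\varphi$ factoring through a state on a nonsignalling (resp.\ quantum commuting, resp.\ $k$-minimal quantum) operator system structure on $\mathcal{V}_{ns}$, which by Theorem~\ref{thm: NS and QC operator systems} is exactly what is needed to realize $p$ as the corresponding type of correlation. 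I would prove the four parts in the order $(4), (3), (1), (2)$, since the nonsignalling case is cleanest, the quantum commuting case is structurally identical, and the $C_{qa}$ case is a limiting argument built on $(1)$.

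\textbf{Parts (4) and (3).} For $(4)$: if $\varphi(D_{ns}) = \varphi(E) \geq 0$, then since $E$ is the Archimedean closure of $\widetilde{E} = \mathrm{cone}(e, P(a,b|x,y), \dots)$ and $\varphi(e) = 1$, the functional $\varphi$ is a state on the AOU space $(\mathcal{V}_{ns}, E, e)$; by Theorem~\ref{thm: cmax universal property} it is automatically a state on the operator system $(\mathcal{V}_{ns}, E^{\text{max}}, e)$, which is a nonsignalling operator system because each $P(a,b|x,y) \in E$. Then $p(a,b|x,y) = \varphi(P(a,b|x,y))$ realizes $p$ as a nonsignalling correlation by Theorem~\ref{thm: NS and QC operator systems}. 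Conversely, if $p \in C_{ns}(n,m)$, Theorem~\ref{thm: NS and QC operator systems} gives a nonsignalling operator system $(\mathcal{W}, \mathcal{C}, f)$ with generators $Q(a,b|x,y)$ and a state $\psi$ with $p(a,b|x,y) = \psi(Q(a,b|x,y))$; composing with the universal map $\pi: \mathcal{V}_{ns} \to \mathcal{W}$ from Corollary~\ref{cor: ns,q,qc universal systems}(1) — which is positive on $E$ — shows $\varphi = \psi \circ \pi$ agrees with the given functional on generators (hence everywhere, by spanning and unitality) and satisfies $\varphi(E) = \psi(\pi(E)) \subseteq \psi(\mathcal{C}_1) \geq 0$. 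Part $(3)$ is verbatim the same argument with $E^{\text{max}}$ replaced by $\mathcal{E}^{\text{proj}}$, $D_{ns}$ by $D_{qc} = \mathcal{E}^{\text{proj}}_1$, and ``nonsignalling'' by ``quantum commuting''; here one invokes Corollary~\ref{cor: ns,q,qc universal systems}(2) and the fact (Theorem~\ref{thm: universal projection system}) that the $P(a,b|x,y)$ are abstract projections in $(\mathcal{V}_{ns}, \mathcal{E}^{\text{proj}}, e)$, so that this is genuinely a quantum commuting operator system.

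\textbf{Part (1).} Here $D_{q(k)} = (E^{\text{proj}(k)})_1$ and the target is $C_q(n,m)$, the honest (finite-dimensional) quantum correlations. If $\varphi(D_{q(k)}) \geq 0$ for some $k$, then $\varphi$ is a state on the $1$-st cone of the $k$-AOU space $(\mathcal{V}_{ns}, E^{\text{proj}(k)}, e)$; equipping this $k$-AOU space with its $k$-minimal operator system structure $(\mathcal{V}_{ns}, (E^{\text{proj}(k)})^{k\text{-min}}, e)$ — whose first cone coincides with $D_{q(k)}$ by \cite[Corollary~3.18]{araiza2021matricial} — and noting via Theorem~\ref{thm: universal quantum k-aou} that the generators are abstract projections in the $k$-AOU sense (hence, by Corollary~\ref{cor: C=C[p] means proj in k-min}, abstract projections in the $k$-minimal operator system), we obtain a $k$-minimal quantum commuting, i.e.\ quantum, operator system on which $\varphi$ is a state, so $p \in C_q(n,m)$ by Theorem~\ref{thm: NS and QC operator systems}. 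Conversely, if $p \in C_q(n,m)$, Theorem~\ref{thm: NS and QC operator systems} produces a quantum operator system $(\mathcal{W}, \mathcal{C}, f)$ realizing $p$; a quantum operator system is $k$-minimal for some $k$ (indeed one may take $k = \dim H$ for the finite-dimensional Hilbert space $H$ on which $\mathcal{W}$ acts), so Corollary~\ref{cor: ns,q,qc universal systems}(3) gives that the universal map $\pi: \mathcal{V}_{ns} \to \mathcal{W}$ is completely positive on $(\mathcal{V}_{ns}, (E^{\text{proj}(k)})^{k\text{-min}}, e)$, hence in particular $\pi((E^{\text{proj}(k)})_1) \subseteq \mathcal{C}_1$, and composing with the state shows $\varphi(D_{q(k)}) \geq 0$.

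\textbf{Part (2) and the main obstacle.} Part $(2)$ should follow from $(1)$ by the definition $D_{qa} = \bigcap_{k=1}^\infty D_{q(k)}$ together with $C_{qa}(n,m) = \overline{C_q(n,m)}$. The forward direction: if $\varphi(D_{qa}) \geq 0$ but $p \notin C_{qa}(n,m)$, one wants a single $k$ with $\varphi(D_{q(k)}) \geq 0$ to contradict $p \notin C_q(n,m)$ — but positivity on the intersection does not immediately give positivity on some $D_{q(k)}$, so this requires a compactness/separation argument, approximating $p$ by a sequence $p_j \in C_q(n,m)$ with $p_j \to p$, each $p_j$ coming with a parameter $k_j$, and controlling the limit. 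I expect this approximation-and-limit step — reconciling the algebraic intersection $\bigcap_k D_{q(k)}$ with the topological closure defining $C_{qa}$ — to be the main obstacle, and I would handle it by a weak\textsuperscript{*}-compactness argument on the state space of $(\mathcal{V}_{ns}, E^{\text{proj}(k)}, e)$ for fixed $k$ analogous to the one used in the proof of Lemma~\ref{lem: inductive limit commutes with k-min structure}, extracting a convergent subnet of the states witnessing the $p_j$ and arguing the limit functional is positive on the appropriate cone; the finite-dimensionality of $\mathcal{V}_{ns}$ is what makes this work. The converse direction of $(2)$ is routine: $p \in C_{qa}(n,m)$ means $p = \lim_j p_j$ with $p_j \in C_q(n,m)$, each giving $\varphi_j$ with $\varphi_j(D_{q(k_j)}) \geq 0$; since $D_{q(k)}$ is decreasing in $k$ one can pass to the tail and conclude $\varphi(D_{qa}) \geq 0$ by continuity, after again using weak\textsuperscript{*}-compactness to ensure $\varphi$ itself (not merely a subnet limit) has the required positivity.
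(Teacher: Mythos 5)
Your treatment of items (1), (3) and (4), and of the easy direction of item (2) ($p\in C_{qa}\Rightarrow\varphi(D_{qa})\ge 0$), matches the paper's proof: both routes run everything through Theorem~\ref{thm: NS and QC operator systems} and the universal maps of Corollary~\ref{cor: ns,q,qc universal systems}, and your identification of $(E^{\text{proj}(k)})^{k\text{-min}}_1$ with $D_{q(k)}$ via \cite[Corollary 3.18]{araiza2021matricial} is exactly the paper's justification. Those parts are fine.

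The gap is in the hard direction of item (2), namely $\varphi(D_{qa})\ge 0\Rightarrow p\in C_{qa}(n,m)$, which you correctly flag as the main obstacle but do not actually resolve. Two problems. First, your stated target --- ``one wants a single $k$ with $\varphi(D_{q(k)})\ge 0$'' --- cannot be the right goal: that would prove $p\in C_q(n,m)$, which is strictly stronger than $p\in C_{qa}(n,m)$ and is false in general (Slofstra's theorem shows $C_q\subsetneq C_{qa}$), so positivity on $\bigcap_k D_{q(k)}$ had better \emph{not} imply positivity on some $D_{q(k)}$ for $\varphi$ itself. Second, the mechanism you propose --- ``approximating $p$ by a sequence $p_j\in C_q(n,m)$ with $p_j\to p$ \dots and controlling the limit'' --- presupposes exactly the approximating sequence whose existence is the content of the claim; extracting weak\textsuperscript{*}-limits of states witnessing the $p_j$ is the argument for the \emph{other} direction. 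What is missing is a way to \emph{manufacture} nearby functionals that are each positive on some $D_{q(k)}$. The paper does this by perturbing $\varphi$ to an interior point $\varphi'$ of the cone of $D_{qa}$-positive hermitian functionals, showing that such a $\varphi'$ is strictly positive on $D_{qa}\setminus\{0\}$, and then arguing by compactness of the norm-one hermitian elements (finite-dimensionality of $\mathcal V_{ns}$) that $\varphi'$ must already be positive on $D_{q(k)}$ for some finite $k$: otherwise one extracts norm-one $x_k\in D_{q(k)}$ with $\varphi'(x_k)<0$, and a limit point lies in $\bigcap_k D_{q(k)}=D_{qa}$, has norm one, and satisfies $\varphi'(x')\le 0$, contradicting strict positivity. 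Each such $\varphi'$ then yields a genuine quantum correlation by item (1), and $\varphi$ is a norm limit of such $\varphi'$, so $p\in\overline{C_q(n,m)}=C_{qa}(n,m)$. Without the interior-point/strict-positivity step your sketch does not close.
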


\begin{proof}
We begin with Items (3) and (4). If $\vp(D_{ns}) \geq 0$, then $\vp: (\cc V_{ns}, D_{ns}, e_{ns}) \to \bb C$ is a state, and therefore by Corollary \ref{cor: ns,q,qc universal systems} it must follow $p \in C_{ns}(n,m)$ since $(\cc V_{ns}, E^{\text{max}}, e)$ is a nonsignalling operator system. Conversely, if $p \in C_{ns}(n,m)$ then $\vp(P(ab|xy)) \geq 0$ for each generator $P(ab|xy),$ thus by linearity and the definition of $D_{ns}$, it must follow $\vp(D_{ns}) \geq 0.$ This proves Item (4). 

To prove Item (3), suppose $\vp(D_{qc}) \geq 0$. Then since $\vp: (\cc V_{ns}, D_{qc}, e_{ns}) \to \bb C$ is a state, by Corollary \ref{cor: ns,q,qc universal systems} it follows $p \in C_{qc}(n,m)$ since $(\cc V_{ns}, \cc E^{proj}, e)$ is a quantum commuting operator system. Conversely, if $p \in C_{qc}(n,m)$, then by Theorem \ref{thm: NS and QC operator systems} there exists a quantum commuting operator system $\cc V$ with generators $\{Q(a,b|x,y)\}$ and a state $\psi: \cc V \to \bb C$ such that $\psi(Q(a,b|x,y)) = p(a,b|x,y).$ By Corollary \ref{cor: ns,q,qc universal systems}, 
\[ p(a,b|x,y) = \psi(Q(a,b|x,y)) = \psi(\pi(P(a,b|x,y)) \]
where $\pi: \cc V_{ns} \to \cc V$ is the universal mapping. It follows that $\psi \circ \pi = \vp$, proving Item (3).

We now prove Item (1). Suppose $\vp(D_{q(k)}) \geq 0.$ By Theorem~\ref{thm: universal quantum k-aou}, $(\cc V_{ns}, E^{\text{proj}(k)}, e)$ is a quantum $k$-AOU space, and thus has a (quantum) operator system structure with a $k$-minimal matrix ordering $\cc C$ such that $\cc C_1 = D_{q(k)}$. Since $\vp$ is a state on this operator system, $p \in C_q(n,m)$ by Theorem~\ref{thm: NS and QC operator systems}. Conversely, suppose $p \in C_q(n,m)$. Then by Theorem \ref{thm: NS and QC operator systems} there exists a ($k$-minimal) quantum operator system $(\cc V, \cc C, e) = \Span\{Q(ab|xy)\}$  and a state $\psi: \cc V \to \bb C$ such that $\psi(Q(ab|xy)) = p(ab|xy)$. By item (3) of Corollary \ref{cor: ns,q,qc universal systems},
\[ p(a,b|x,y) = \psi(Q(a,b|x,y)) = \psi(\pi(P(a,b|x,y)) \]
where $\pi: \cc V_{ns} \to \cc V$ is the universal mapping. Again, $\psi \circ \pi = \vp$, proving item (1).

We now prove Item (2). Let $p \in C_{qa}(n,m)$, which we write as $p = \lim_i p^i$, where each $p^i \in C_q(n,m)$. By Theorem~\ref{thm: NS and QC operator systems}, for each $i \in I$, there exists a quantum $k(i)$-AOU space, $\cc V^i$, and a state $\vp^i: \cc V^i \to \bb C$ such that $p^i(a,b|x,y) = \vp^i(a,b|x,y)$ for all $x,y \in [n], a,b \in [m].$ For each $i \in I$, let $\pi^i: \cc V_{ns} \to \cc V^i$ denote the universal mapping obtained via Theorem~\ref{thm: inductive limit is proper}. Then it follows $\vp = \lim_i \vp^i \pi^i.$ If $x \in D_{qa}$, then $x \in D_{qk(i)}$ for each $i,$ and consequently $\vp^i\pi^i(x) \geq 0$ for every $i$. Therefore, it must follow $\vp(x) \geq 0$. 
 
Conversely, suppose $\vp(D_{qa}) \geq 0$. We will show that there exists a sequence of states $\vp^k: (\cc V_{ns}, D_{q(n_k)}, e_{ns}) \to \bb C$ which converge in norm to $\vp$ with respect to the norm on the dual of $(\cc V_{ns}, D_{qa}, e_{ns})$. Since $\cc V_{ns}$ is finite dimensional, all norms on the dual of $\cc V_{ns}$ are equivalent, and convergence in norm will imply $\sigma(\cc V_{ns}^*, \cc V_{ns})$ convergence, so that the quantum correlations defined by $p^k(a,b|x,y):= \vp^k(P_{ns}(a,b|x,y))$ converge to the quantum approximate correlation defined by $p(a,b|x,y) = \vp(P_{ns}(a,b|x,y))$.

Suppose that $\vp'(D_{qa}) \geq 0$ and that $\vp'$ is an interior point of the set of positive states, meaning that for some $\epsilon > 0$ we have that if $\psi$ is hermitian and $\|\psi - \vp'\| < \epsilon$ then $\psi(D_{qa}) \geq 0$. We claim that if $x \in D_{qa}$ and $\vp'(x)=0$ then $x=0$. Indeed, suppose that $\vp'(x)=0$. If $\psi(x) > 0$ for some other state on $D_{qa}$, then for every $t > 0$, $\vp'(x) - t(\psi(x) - \vp'(x)) < 0$. Setting $\rho_t = \vp' - t(\psi - \vp')$, we may choose $t > 0$ small enough so that $\|\vp' - \rho_t\| < \epsilon$ and hence $\rho_t$ is a state. However this is a contradiction since $\rho_t(x) < 0$. It follows that $\vp'$ is strictly positive on all non-zero elements of $D_{qa}$. 

Next, we claim that $\vp'$ is positive on $D_{q(k)}$ for some $k \in \mathbb{N}$. Suppose this is not the case. Then for every $k \in \mathbb{N}$, there exists $x_k \in D_{q(k)}$ such that $\vp'(x_k) < 0$. We may assume that $\|x_k\| = 1$ for all $k$, where the norm is calculated in the AOU space $(\cc V_{ns}, D_{qa}, e_{ns})$. Since $V_{ns}$ is finite dimensional, the set of norm one hermitian elements is closed and bounded and therefore compact. Let $x'$ be a limit point of $\{x_k\}$. Then $x' \in D_{qa}$, $\|x'\|=1$ and $\vp'(x) \leq 0$. This is a contradiction, since $\vp'$ is strictly positive on non-zero elements of $D_{qa}$. This proves the claim. Since $\vp$ is the norm limit of a sequence $\{\vp^k\}$ of interior points of the state space of $(\cc V_{ns}, D_{qa}, e_{ns})$, we have proven Item (2). \qedhere
\end{proof}

\begin{rmk}
In the notation of Theorem \ref{thm: hierarchy AOU spaces}, it is also true that $p \in C_{loc}(n,m)$ if and only if $\vp(D_{q(1)}) \geq 0$. We leave the proof to the interested reader. The main point is that $D_{q(1)}$ defines a $1$-minimal quantum commuting operator system $(\cc V_{ns}, \cc C, e)$ and every $1$-minimal quantum commuting operator system can be realized as a subsystem of a commutative C*-algebra (e.g. see the construction of OMIN$(V)$ in \cite{PaulsenTodorovTomfordeOpSysStructures}). In fact, $(\cc V_{ns}, \cc C, e)$ is completely order isomorphic to the operator system presented in Example \ref{ex: nonsignalling ex}.
\end{rmk}

\begin{rmk}
Theorem \ref{thm: hierarchy AOU spaces} defines a hierarchy of cones
\[ D_{ns} \subseteq D_{qc} \subseteq D_{qa} \subseteq \dots \subseteq D_{q(3)} \subseteq D_{q(2)} \subseteq D_{q(1)} \]
each making $\cc V_{ns}$ into an AOU space. This hierarchy is dual to the hierarchy of correlation sets in the sense of Kadison duality [REF]: to every closed compact convex subset $C$ of a complex topological vector space, there corresponds an AOU space, namely the affine functions on $C$; and conversely to every AOU space $V$, there corresponds a compact convex subset of its dual space, namely the state space of $V$. Under this duality, we may realize $(\cc V_{ns}, D_{ns}, e)$ as the affine functions on $C_{ns}(n,m)$, $(\cc V_{qc}, D_{qc}, e)$ as the affine functions on $C_{qc}(n,m)$, and $(\cc V_{ns}, D_{qa}, e)$ as the affine functions on $C_{qa}(n,m)$. It follows from \cite{junge2011connes} and \cite{OzawaAboutConnes} that Connes' embedding problem is equivalent to asking if $D_{qc} = D_{qa}$ for all parameters $n,m \in \mathbb{N}$.
\end{rmk}

\section{SIC-POVMs}

We conclude with an application to the existence question for symmetric informationally complete positive operator-valued measures (SIC-POVMs). We find a new necessary condition for the existence of a SIC-POVM based on the universal operator system constructions above. We make use of the following well-known fact [Ref].

\begin{lem} \label{lem: Hilbert-Schmidt}
Let $d \in \mathbb{N}$ and let $\tau(x) = \frac{1}{d}\Tr(x)$ denote the normalized trace of $x \in M_d$. Then $M_d$ is a Hilbert space with respect to the Hilbert-Schmidt inner product $\langle x,y \rangle_{HS} := \tau(x^*y)$. If $x,y \geq 0$ then $\langle x,y \rangle \geq 0$. Furthermore, if $\langle x,y \rangle \geq 0$ for all $y \geq 0$, then $x \geq 0$.
\end{lem}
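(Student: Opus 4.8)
The plan is to verify the three assertions in turn, each by a short standard computation. First I would check that $\langle\cdot,\cdot\rangle_{HS}$ is an inner product. Sesquilinearity (linear in the second argument, conjugate-linear in the first) is immediate from linearity of the trace, and conjugate symmetry follows from $\Tr(A^*)=\overline{\Tr(A)}$ together with $(x^*y)^*=y^*x$. Positive-definiteness is the computation $\langle x,x\rangle_{HS}=\frac1d\Tr(x^*x)=\frac1d\sum_{i,j}|x_{ij}|^2$, which is nonnegative and vanishes only when $x=0$. Since $M_d$ is finite dimensional, the resulting inner product space is automatically complete, hence a Hilbert space.

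Next, for the statement that $x,y\geq 0$ implies $\langle x,y\rangle_{HS}\geq 0$: since $x=x^*$ we have $\langle x,y\rangle_{HS}=\frac1d\Tr(xy)$, and using the positive square root $x^{1/2}$ together with cyclicity of the trace, $\Tr(xy)=\Tr(x^{1/2}\,y\,x^{1/2})$. The matrix $x^{1/2}yx^{1/2}=(x^{1/2})^*y\,x^{1/2}$ is positive semidefinite because $y\geq 0$, so its trace is a sum of nonnegative eigenvalues and hence nonnegative.

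Finally, for the converse I would test against rank-one projections $y=\xi\xi^*$ (every positive matrix is a nonnegative combination of these by the spectral theorem, so in fact nonnegativity of the pairing on rank-one projections already forces it on all of $M_d^+$, though only the stated direction is needed here). For $y=\xi\xi^*$ one computes $\langle x,\xi\xi^*\rangle_{HS}=\frac1d\Tr(x^*\xi\xi^*)=\frac1d\,\xi^*x^*\xi$. Requiring this to be real and nonnegative for every $\xi\in\mathbb{C}^d$ says exactly that $x^*$ is positive semidefinite; in particular $x^*$ is self-adjoint, so $x=x^*$ and therefore $x\geq 0$.

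There is no substantive obstacle here — the lemma is elementary — but the one point worth stating carefully is this last step, where positivity of the Hilbert–Schmidt pairing against \emph{all} positive matrices (equivalently, against all rank-one projections) simultaneously delivers self-adjointness of $x$ and positivity of its quadratic form, so that no separate argument that $x=x^*$ is required.
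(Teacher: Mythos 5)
Your proof is correct. The paper states this lemma as a well-known fact and offers no proof of its own, so there is nothing to compare against; your argument is the standard one, and you are right to flag the one nontrivial point, namely that over $\mathbb{C}$ the condition $\xi^* x^* \xi \geq 0$ for all $\xi$ already forces $x^*$ (hence $x$) to be self-adjoint, so no separate hermiticity hypothesis is needed.
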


\begin{defn}
Let $\mathcal{V}$ be a finite dimensional operator system, and let $\langle \cdot, \cdot \rangle: \mathcal{V} \times \mathcal{V} \to \mathbb{C}$ be an inner product making $\mathcal{V}$ into a Hilbert space. Define an inner product $\langle \cdot, \cdot \rangle_n$ on $M_n(\mathcal{V}) \cong M_n \otimes \mathcal{V}$ by $\langle A \otimes x, B \otimes y \rangle_n := \langle A,B \rangle_{HS} \langle x,y \rangle$ for all $A,B \in M_n$ and $x,y \in \mathcal{V}$ and extended to $M_n(\mathcal{V})$ by sesquilinearity of the inner product. We say that $\langle \cdot, \cdot \rangle$ is \textbf{completely positive} if $\langle x,y \rangle_n \geq 0$ whenever $x,y \in M_n(\mathcal{V})^+$. It is called \textbf{completely self-dual} if $\langle x,y \rangle_n \geq 0$ for all $y \geq 0$ implies that $x \geq 0$.
\end{defn}

It follows from Lemma \ref{lem: Hilbert-Schmidt} that the inner product $\langle \cdot, \cdot \rangle_{HS}$ on $M_d$ is completely positive and completely self-dual for all $d \in \mathbb{N}$.

Let $d \in \mathbb{N}$. A \textbf{symmetric informationally complete positive operator-valued measure} in $M_d$ is a set $\{ P_1, \dots, P_{d^2} \} \subseteq M_d$ of projections which satisfy the relations
\[ \sum_{i=1}^{d^2} P_i = d I \]
and
\[ \text{Tr}(P_i P_j) =  \begin{cases} \frac{1}{d+1} & i \neq j \\ 1 & i = j \end{cases} \]
where $\text{Tr}(\cdot)$ denotes the trace function on $M_d$ and $I \in M_d$ denotes the identity matrix. It is conjectured that a SIC-POVM exists in $M_d$ for all $d \in \mathbb{N}$. However, this conjecture has only been verified for some values of $d$. It is an open question whether or not there exists an upper bound on the set of integers $d \in \mathbb{N}$ for which a SIC-POVM exists in $M_d$ \cite{Fuchs2017SICs}.

Suppose there exists a SIC-POVM $\{P_1, \dots, P_{d^2} \}$ in $M_d$. Then the $d$-minimal\footnote{To see $M_d$ is $d$-minimal, consider e.g. Theorem 3.7 of \cite{xhabli2012super}.} operator system $M_d$ is spanned by the projections $P_1, \dots, P_{d^2}$ which satisfy the relation
\[ \sum_{i=1}^{d^2} P_i = d I \]
(that $\{P_1, \dots, P_{d^2}\}$ spans $M_d$ follows from the observation that the matrix $M_{i,j} = \Tr(P_i P_j)$ has rank $d^2$). Let $\mathcal{R}$ denote the relation
\[ \sum_{i=1}^{d^2} p_i = d e \]
and let $\cc U_{\cc R}^d$ denote the vector space $\cc U_{\cc R} = \text{span} \{p_1, \dots, p_{d^2}\}$ equipped with its universal $d$-AOU space structure inherited from the cone $E^{\text{proj}(k)}$ (regarded as a $d$-minimal operator system). By Theorem \ref{thm: inductive limit is proper}, the corresponding universal $d$-minimal matrix-ordered vector space $\mathcal{U}_{\mathcal{R}}^d$ is an operator system (i.e. its matrix-ordering is proper), since we have assumed there exists a SIC-POVM in $M_d$. By Theorem \ref{thm: inductive limit is proper}, the linear map $\pi: p_i \mapsto P_i$ for $i \in [d^2]$ is a unital completely positive map.

It turns out that $\mathcal{U}_{\mathcal{R}}^d$ is an operator system for every $d \in \mathbb{N}$, whether or not a SIC-POVM exists in $M_d$. To show that, we need the following Lemma.

\begin{lem} \label{lem: SIC relation existence}
Let $d \in \mathbb{N}$. Then there exists a $d$-minimal operator system $\mathcal{V}$ of dimension $d^2$ spanned by projections $q_1, \dots, q_{d^2} \in \mathcal{V}$ which satisfy the relation
\[ \sum_{i=1}^{d^2} q_i = d e \]
where $e \in \mathcal{V}$ denotes the identity.
\end{lem}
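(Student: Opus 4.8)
The plan is to realize the required system concretely as $M_d$ itself, equipped with a Weyl--Heisenberg orbit of rank-one projections. First I would recall that $M_d$, with its usual operator system structure, is a $d$-minimal operator system of dimension $d^2$ (see \cite{xhabli2012super}; indeed $M_d$ embeds completely order isomorphically into itself, the prototypical $d$-minimal building block, and the argument of Theorem \ref{thm: k-min characterization} shows the $d$-minimal structure built from $(M_d)_d$ recovers the standard one). So the only real content is to exhibit $d^2$ rank-one projections in $M_d$ which sum to $dI$ and span $M_d$; then $\mathcal V = M_d$, $e = I$, and these projections do the job.

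Next I would introduce the generalized Pauli (clock-and-shift) unitaries $W_{j,k} := X^j Z^k$ for $j,k \in \{0,1,\dots,d-1\}$, where $Z\ket{l} = \omega^l \ket{l}$, $X\ket{l} = \ket{l+1 \bmod d}$, and $\omega = e^{2\pi i/d}$. These satisfy $\Tr(W_{j,k}^* W_{j',k'}) = d\,\delta_{jj'}\delta_{kk'}$, so $\{W_{j,k}\}_{j,k}$ is an orthogonal basis of $M_d$, and the projective representation they generate on $\mathbb{C}^d$ is irreducible. Fix a unit vector $\psi \in \mathbb{C}^d$ and set $q_{j,k} := W_{j,k}\,\ket{\psi}\bra{\psi}\,W_{j,k}^*$, each a rank-one projection. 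By Schur's lemma the twirl $A \mapsto \tfrac{1}{d^2}\sum_{j,k} W_{j,k} A W_{j,k}^*$ is the projection of $M_d$ onto its scalar commutant $\mathbb{C}I$; evaluating at $A = \ket{\psi}\bra{\psi}$ and matching traces gives $\sum_{j,k} q_{j,k} = dI$. Thus the relation $\sum q_{j,k} = de$ holds with $e = I$, regardless of the choice of $\psi$.

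The step that needs genuine care is showing $\{q_{j,k}\}$ spans $M_d$ for a suitable $\psi$. Expanding $\ket{\psi}\bra{\psi} = \tfrac1d \sum_{l,m} c_{l,m} W_{l,m}$ in the orthogonal basis above, the coefficient $c_{l,m}$ is a nonzero multiple of $\overline{\langle \psi | W_{l,m}\psi\rangle}$, and in particular the coefficient of $W_{0,0} = I$ is nonzero. Using the commutation relations $W_{j,k} W_{l,m} W_{j,k}^{*} = \omega^{\sigma((j,k),(l,m))} W_{l,m}$ for the nondegenerate symplectic form $\sigma$ on $\mathbb{Z}_d \times \mathbb{Z}_d$, one sees that the matrix expressing the $q_{j,k}$ in the basis $\{W_{l,m}\}$ factors as a (rescaled) character table of $\mathbb{Z}_d \times \mathbb{Z}_d$ times the diagonal matrix $\text{diag}(c_{l,m})$. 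The character table is invertible (it is a rescaled two-dimensional DFT, nondegeneracy of $\sigma$ making the indexing map a bijection onto $\widehat{\mathbb{Z}_d\times\mathbb{Z}_d}$), so $\{q_{j,k}\}$ is a basis of $M_d$ exactly when $c_{l,m} \neq 0$ for all $l,m$, i.e. when $\langle\psi|W_{l,m}\psi\rangle \neq 0$ for all $l,m$. For each $(l,m) \neq (0,0)$ the function $\psi \mapsto \langle\psi|W_{l,m}\psi\rangle$ is a nonzero real-polynomial function on the sphere (since $W_{l,m} \neq 0$), so its zero set is closed of measure zero; the union of these finitely many sets is a proper subset of the unit sphere, and any $\psi$ in its complement yields the required spanning family. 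Taking $\mathcal V = M_d$ with this $\psi$ and $q_i = q_{j,k}$ completes the argument; the main obstacle is precisely this genericity/spanning computation, the twirl identity and the $d$-minimality of $M_d$ being routine.
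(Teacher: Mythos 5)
Your proof is correct, but it takes a genuinely different route from the paper. The paper builds a purely commutative model: it takes the diagonal projections $I_d\oplus E_1,\ 0_d\oplus E_2,\dots,0_d\oplus E_d$ in $M_{2d}$, repeats them cyclically to get $d^2$ projections summing to $dI$, and then symmetrizes over all permutations in $S_{d^2}$ via a large direct sum to force linear independence; the resulting system is $1$-minimal (hence $d$-minimal) but lives in a commutative algebra of enormous size. You instead realize the lemma inside $M_d$ itself, taking the Weyl--Heisenberg orbit $q_{j,k}=W_{j,k}\ket{\psi}\bra{\psi}W_{j,k}^*$ of a generic fiducial vector: the twirl/Schur argument gives $\sum q_{j,k}=dI$, and the factorization of the change-of-basis matrix as (character table of $\mathbb{Z}_d\times\mathbb{Z}_d$) times $\mathrm{diag}(c_{l,m})$ reduces spanning to the genericity condition $\langle\psi|W_{l,m}\psi\rangle\neq 0$ for all $(l,m)$, which holds off a finite union of measure-zero sets since $\langle\,\cdot\,|W_{l,m}\,\cdot\,\rangle$ vanishes identically only if $W_{l,m}=0$. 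Both arguments are complete; yours is noncommutative and buys something extra: the paper's remark following the lemma explicitly states that the authors do not know whether the lemma can be satisfied with $\mathcal{V}=M_d$ in general (they only know this when a SIC-POVM exists), and your construction answers that question affirmatively for every $d$, at the cost of a genericity argument in place of the paper's elementary combinatorial independence check.
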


\begin{proof}
Let $\{e_i\}$ denote the standard basis for $\mathbb{C}^d$ and $E_i = e_i^*e_i$ the corresponding rank one projection. Let $P_1 := I_d \oplus E_1 \in M_{2d}$. For $i=2,3,\dots,d$, let $P_i = 0_d \oplus E_i \in M_{2d}$. For $j = d+1, \dots, d^2$, let $P_j = P_{j \mod d}$ (so that $P_1 = P_{d+1}$, etc). Then $\sum_{i=1}^{d^2} P_i= dI$. Since $P_1, \dots, P_d$ are linearly independent, $\dim \text{span} \{P_1, \dots, P_{d^2} \} = d$.

For each $i \in [d^2]$, define
\[ q_i = \bigoplus_{\sigma \in S_{d^2}} P_{\sigma(i)} \]
where $S_{d^2}$ denotes the permutation group for $[d^2]$. Since 
\[ \sum_{i=1}^{d^2} P_{\sigma(i)} = \sum_{i=1}^{d^2} P_i = dI \]
for every $\sigma \in S_{d^2}$, we see that $\sum q_i = dI$, where $I$ is the identity of $M_{2d|S_{d^2}|}$. 

We claim that $\{q_1, \dots, q_{d^2}\}$ is linearly independent. To see this, suppose that $\sum t_i q_i = 0$. Then $\sum t_i P_{\sigma(i)} = 0$ for every permutation $\sigma \in S_{d^2}$. By the definition of $P_1,\dots,P_{d^2}$,  there exist disjoint subsets $\Gamma_1, \dots, \Gamma_d$ of $[d^2]$ with $|\Gamma_i| = d$ for each $i$ such that $P_{\sigma(i)} = P_k$ whenever $i\in \Gamma_{k}$. Hence
\[ \sum_{i=1}^{d^2} t_i P_{\sigma(i)} = \sum_{k=1}^d ( \sum_{j \in \Gamma_k} t_j) P_k = 0. \]
Since $\{P_1, \dots, P_d\}$ are linearly independent, 
\[ \sum_{j \in \Gamma_k} t_j = 0 \]
for all $k=1,2,\dots,d$. Since this holds for every permutation $\sigma \in S_{d^2}$, we conclude that
\[ \sum_{j \in \Gamma} t_j = 0 \]
for every subset $\Gamma \subseteq [d^2]$ with $|\Gamma| = d$. It is now easily verified that $t_i = 0$ for every $i=1,2,\dots,d^2$.
\end{proof}

\begin{rmk}
In the proof of the preceding lemma, the projections $\{q_1, \dots, q_{d^2}\}$ constructed are mutually commuting. Thus the operator system they span is 1-minimal and hence $d$-minimal for all $d \in \mathbb{N}$. When there exists a SIC-POVM in $M_d$, then the statement can be satisfied for $\mathcal{V} = M_d$. We do not know if the statement can be satisfied for $\mathcal{V} = M_d$ in general.
\end{rmk}

\begin{cor}
Let $\mathcal{R}$ denote the relation
\[ \sum_{i=1}^{d^2} p_i = d e \]
and let $\mathcal{U}_{\mathcal{R}}^d = \text{span} \{p_1, \dots, p_{d^2}\}$ denote the corresponding universal $d$-minimal operator system generated by projections $p_1, \dots, p_{d^2}$ with unit $e$ satisfying $\mathcal{R}$. Then $\mathcal{U}_{\mathcal{R}}^d$ is an operator system (i.e. its matrix ordering is proper) and $\dim(\mathcal{U}_{\mathcal{R}}) = d^2$.
\end{cor}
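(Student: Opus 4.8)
The plan is to read this off as an immediate consequence of Lemma~\ref{lem: SIC relation existence} and Theorem~\ref{thm: inductive limit is proper}. For the dimension count, note that $\mathcal R$ consists of the single equation $\sum_{i=1}^{d^2} p_i = d e$, so $M_{\mathcal R} \in M_{1,d^2+1}(\mathbb R)$ is the row $(d,-1,\dots,-1)$, which has rank $1$; hence $\dim(\mathcal U_{\mathcal R}) = (d^2+1) - 1 = d^2$, as claimed. Since $\sum_i q_i = d e$ forces $e \in \text{span}\{q_1,\dots,q_{d^2}\}$ in any such space, this is also the only dimension compatible with the relation.

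For the properness assertion, I would produce the witness $d$-AOU space demanded by the hypothesis of Theorem~\ref{thm: inductive limit is proper}. By Lemma~\ref{lem: SIC relation existence} there is a $d$-minimal operator system $(\mathcal V, \mathcal C, f)$ of dimension $d^2$ spanned by genuine projections $q_1,\dots,q_{d^2}$ satisfying $\sum_i q_i = d f$. Let $C := \mathcal C_d$ be its $d$-th matricial cone, so that $(\mathcal V, C, f)$ is a $d$-AOU space and $\mathcal C = C^{d\text{-min}}$ because $\mathcal V$ is $d$-minimal. Each $q_i$, being a projection in a C*-cover of $\mathcal V$, is an abstract projection in the operator system $\mathcal V$; so Corollary~\ref{cor: C=C[p] means proj in k-min} gives $C = C[q_i]$, i.e.\ each $q_i$ is an abstract projection in the $d$-AOU sense. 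Together with $\dim(\mathcal V) = d^2 = \dim(\mathcal U_{\mathcal R})$ and $\mathcal V = \text{span}\{f, q_1,\dots,q_{d^2}\}$, this puts $(\mathcal V, C, f)$ into precisely the form required by Theorem~\ref{thm: inductive limit is proper}.

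Invoking that theorem then gives at once that $E^{\text{proj}(d)} \subseteq M_d(\mathcal U_{\mathcal R})_h$ is a proper cone, so $(\mathcal U_{\mathcal R}, E^{\text{proj}(d)}, e)$ is a bona fide $d$-AOU space, and that the map $\pi\colon \mathcal U_{\mathcal R}\to\mathcal V$ sending $e\mapsto f$, $p_i\mapsto q_i$ is unital and $d$-positive. To pass from ``proper $d$-th cone'' to ``proper matrix ordering'' of the associated $d$-minimal operator system $\mathcal U_{\mathcal R}^d := (\mathcal U_{\mathcal R}, (E^{\text{proj}(d)})^{d\text{-min}}, e)$, I would either apply Theorem~\ref{thm: k-min characterization}, whose embedding $\bigoplus_{\varphi}\varphi$ (over unital $d$-positive $\varphi\colon\mathcal U_{\mathcal R}\to M_d$) is a unital complete order embedding and hence forces each cone $(E^{\text{proj}(d)})^{d\text{-min}}_n$ to be proper, or observe that $\pi$ is injective (a linear bijection, since it maps a spanning set to a spanning set and $\dim(\mathcal U_{\mathcal R})=\dim(\mathcal V)$) and completely positive into the $d$-minimal $\mathcal V$, so it pulls back the proper matrix ordering of $\mathcal V$.

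I expect no substantial obstacle: the corollary is essentially a repackaging of Lemma~\ref{lem: SIC relation existence} and Theorem~\ref{thm: inductive limit is proper}. The one step that needs care is the translation between the output of the lemma (a $d$-minimal operator system with honest projections) and the input format of the theorem (a $d$-AOU space whose generators are abstract projections in the $d$-AOU sense), which is exactly bridged by Corollary~\ref{cor: C=C[p] means proj in k-min}.
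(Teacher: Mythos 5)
Your proposal is correct and follows essentially the same route as the paper, which likewise deduces the corollary from Lemma~\ref{lem: SIC relation existence} together with Theorem~\ref{thm: inductive limit is proper}; you merely make explicit the dimension count and the bridging step (via Corollary~\ref{cor: C=C[p] means proj in k-min}) from the lemma's $d$-minimal operator system with honest projections to the $d$-AOU-space input format of the theorem, details the paper leaves implicit.
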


\begin{proof}
This follows from Theorem \ref{thm: inductive limit is proper} and from Lemma \ref{lem: SIC relation existence}, since $\mathcal{U}_{\mathcal{R}}$ has a proper matrix ordering provided there exists at least one $d$-minimal operator system spanned by projections $Q_1, \dots, Q_{d^2}$ satisfying $\mathcal{R}$ and having full dimension.
\end{proof}

Again, suppose there exists a SIC-POVM $\{P_1,\dots, P_{d^2}\}$ in $M_d$, and let $\mathcal{U}_{\mathcal{R}}^d = \text{span}\{p_1, \dots, p_{d^2}\}$ denote the corresponding universal $d$-minimal operator system. Then the map $\pi: \mathcal{U}_{\mathcal{R}}^d \to M_d$ defined by $\pi(p_i) = P_i$ is unital completely positive by Theorem \ref{thm: inductive limit is proper}. Since $\dim(\mathcal{U}_{\mathcal{R}}^d) = d^2$, $\pi$ is also injective. Therefore we can define an inner product $\langle \cdot, \cdot \rangle$ on $\mathcal{U}_{\mathcal{R}}^d$ by
\[ \langle x,y \rangle := \frac{1}{d} \Tr(\pi(x)^* \pi(y)). \]
Since $\pi(p_i) = P_i$ and $\{P_i\}$ is a SIC-POVM, we can calculate this inner product directly using 
\[ \langle p_i, p_j \rangle =  \begin{cases} \frac{1}{d(d+1)} & i \neq j \\ \frac{1}{d} & i = j \end{cases} \]
This yields new necessary conditions for the existence of a SIC-POVM in terms of the inner product $\langle \cdot, \cdot \rangle$ on $\mathcal{U}_{\mathcal{R}}^d$.

\begin{thm}
Let $d \in \mathbb{N}$. Suppose that there exists a SIC-POVM in $M_d$. Let $\mathcal{V} = \mathcal{U}_{\mathcal{R}}^d = \text{span}\{p_1, \dots, p_{d^2}\}$ denote the universal $d$-minimal operator system spanned by projections $p_1, \dots, p_{d^2}$ satisfying the relation
\[ \sum_{i=1}^{d^2} p_i = de \]
where $e$ denotes the identity. Then the inner product $\langle \cdot, \cdot \rangle$ on $\mathcal{V}$ given by 
\[ \langle p_i, p_j \rangle =  \begin{cases} \frac{1}{d(d+1)} & i \neq j \\ \frac{1}{d} & i = j \end{cases} \]
is completely positive. Moreover, if $\langle \cdot, \cdot \rangle$ is completely self-dual, then there exists a complete order isomorphism $\pi: \mathcal{V} \to M_d$ such that $\{ \pi(p_i)\}$ is a SIC-POVM.
\end{thm}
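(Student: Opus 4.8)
The plan is to exploit the unital completely positive injection $\pi \colon \mathcal{V} = \mathcal{U}_{\mathcal{R}}^d \to M_d$ with $\pi(p_i) = P_i$ provided by Theorem~\ref{thm: inductive limit is proper}, together with the observation (recorded after Lemma~\ref{lem: Hilbert-Schmidt}) that the Hilbert--Schmidt inner product is completely positive and completely self-dual on every $M_d$. Since by construction $\langle x, y \rangle = \tfrac{1}{d}\Tr(\pi(x)^*\pi(y)) = \langle \pi(x), \pi(y)\rangle_{HS}$ is the pullback of $\langle \cdot, \cdot\rangle_{HS}$ along $\pi$, the key preliminary step is to promote this identity to the matricial level: for every $n$ and all $\xi, \eta \in M_n(\mathcal{V})$,
\[ \langle \xi, \eta \rangle_n = \langle \pi_n(\xi), \pi_n(\eta) \rangle_{HS}, \]
where the right-hand side is the Hilbert--Schmidt inner product on $M_n(M_d) \cong M_{nd}$. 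By sesquilinearity this is checked on elementary tensors $A \otimes x$, $B \otimes y$ with $A, B \in M_n$ and $x, y \in \mathcal{V}$, where, using $\pi_n(A \otimes x) = A \otimes \pi(x)$ and the factorization of the Hilbert--Schmidt inner product over a Kronecker product, it reduces to $\langle A, B \rangle_{HS}\langle x, y \rangle = \langle A, B \rangle_{HS}\langle \pi(x), \pi(y) \rangle_{HS}$.

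Granting this, complete positivity of $\langle \cdot, \cdot \rangle$ is immediate: if $\xi, \eta \in M_n(\mathcal{V})^+$ then $\pi_n(\xi), \pi_n(\eta) \in M_{nd}^+$ because $\pi$ is completely positive, so $\langle \xi, \eta \rangle_n = \langle \pi_n(\xi), \pi_n(\eta) \rangle_{HS} \geq 0$ by Lemma~\ref{lem: Hilbert-Schmidt}. For the ``moreover'' clause, assume $\langle \cdot, \cdot \rangle$ is completely self-dual. The map $\pi$ is already a linear bijection, since it is injective and $\dim \mathcal{V} = d^2 = \dim M_d$, so it suffices to show $\pi^{-1}$ is completely positive. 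Let $x \in M_n(\mathcal{V})_h$ with $\pi_n(x) \in M_{nd}^+$. For an arbitrary $y \in M_n(\mathcal{V})^+$ we have $\pi_n(y) \in M_{nd}^+$, whence $\langle x, y \rangle_n = \langle \pi_n(x), \pi_n(y) \rangle_{HS} \geq 0$ again by Lemma~\ref{lem: Hilbert-Schmidt}. As $y \geq 0$ was arbitrary, complete self-duality forces $x \geq 0$. Hence this $\pi$ is a complete order isomorphism onto $M_d$, and $\{\pi(p_i)\} = \{P_i\}$ is a SIC-POVM, as required.

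I do not expect a serious obstacle: the entire argument is a diagram chase through $\pi$, invoking complete positivity of $\pi$ in one direction and complete self-duality of $\langle\cdot,\cdot\rangle$ in the other. The only step requiring any care is the matricial pullback identity for the inner products, which amounts to keeping track of the (in)consistency of trace normalizations when identifying $M_n(M_d)$ with $M_{nd}$; since positive scalars do not affect positivity, even this is routine.
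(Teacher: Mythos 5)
Your argument is correct and follows essentially the same route as the paper: both directions hinge on the matricial pullback identity $\langle \xi,\eta\rangle_n = \langle \pi_n(\xi),\pi_n(\eta)\rangle_{HS}$, with complete positivity of $\pi$ giving positivity of the inner product and complete self-duality forcing $\pi^{-1}$ to be completely positive. The paper phrases the second half by transporting the matrix ordering to $M_d$ and showing it coincides with the standard one, but that is just a reformulation of your direct verification that $\pi_n(x)\geq 0$ implies $x\geq 0$; your explicit check of the pullback identity on elementary tensors (including the trace normalizations) fills in a step the paper leaves implicit.
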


\begin{proof}
Let $\pi: p_i \mapsto P_i$ be the completely positive whose range is the matrix algebra $M_d$ and where $\{P_1,\dots,P_{d^2}\}$ is a SIC-POVM.
Let $n \in \mathbb{N}$. If $a,b \in M_n(\mathcal{V})_+$, then $\pi_n(a), \pi_n(b) \in M_n \otimes M_d$ are necessarily positive. Hence
\begin{equation} \label{innerProductEqn}
\langle a, b \rangle = \langle \pi_n(a), \pi_n(b) \rangle_{HS} \geq 0
\end{equation}
since $\langle \cdot, \cdot \rangle_{HS}$ is completely positive (here we are using the fact that $\langle p_i, p_j \rangle = \langle P_i, P_j \rangle_{HS}$). We conclude that $\langle \cdot, \cdot \rangle$ is completely positive.

Now assume that $\langle \cdot, \cdot \rangle$ is completely self-dual. Let $\mathcal{C}$ denote the matrix ordering on $\mathcal{V}$. Since $\pi$ is injective, $\mathcal{D} := \{\pi_n(\mathcal{C}_n)\}$ is a matrix ordering on $M_d$. Suppose that $\langle \pi_n(x), \pi_n(a) \rangle_{HS} \geq 0$ for all $\pi_n(a) \in \mathcal{D}_n$. Then $\langle x,a \rangle_n \geq 0$ for all $a \in \mathcal{C}_n$ by Equation \ref{innerProductEqn}. Since $\langle \cdot, \cdot \rangle$ is completely self-dual, $x \geq 0$. Since $\pi$ is completely positive, $\pi_n(x) \geq 0$. It follows that $\langle \cdot, \cdot \rangle_{HS}$ is completely self-dual with respect to the matrix ordering $\mathcal{D}$. Finally, if $X \in (M_n \otimes M_d)^+$ then $\langle A, X \rangle_{HS} \geq 0$ for all $A \in \mathcal{D}_n$. It follows that $X \in \mathcal{D}_n$. So $\mathcal{D}_n = (M_n \otimes M_d)^+$. We conclude that $\pi$ is a complete order isomorphism on $M_d$.
\end{proof}

Since $\langle \cdot, \cdot \rangle$ must be completely positive whenever a SIC-POVM exists, and since $\mathcal{U}_{\mathcal{R}}^d$ is an operator system, we have obtained a new necessary condition for the existence of a SIC-POVM. The following is the obvious logical implication.

\begin{cor}
Let $d \in \mathbb{N}$ and let $\mathcal{V} = \mathcal{U}_{\mathcal{R}}^d = \text{span}\{p_1, \dots, p_{d^2}\}$ denote the universal $d$-minimal operator system spanned by projections $p_1, \dots, p_{d^2}$ satisfying the relation
\[ \sum_{i=1}^{d^2} p_i = de \]
where $e$ denotes the identity. If the inner product $\langle \cdot, \cdot \rangle$ on $\mathcal{V}$ given by 
\[ \langle p_i, p_j \rangle =  \begin{cases} \frac{1}{d(d+1)} & i \neq j \\ \frac{1}{d} & i = j \end{cases} \]
is not completely positive, then there does not exist a SIC-POVM in $M_d$.
\end{cor}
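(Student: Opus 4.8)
The plan is to deduce this corollary directly from the preceding theorem by contraposition, after one small compatibility check. First I would confirm that the displayed formula really does define an inner product on $\mathcal{V}$, independently of the existence question, so that the hypothesis is meaningful. Because $\{p_1,\dots,p_{d^2}\}$ is a basis of $\mathcal{U}_{\mathcal{R}}^d$ (whose dimension is $d^2$ by the corollary just above), the formula extends by sesquilinearity to a unique hermitian form whose Gram matrix in this basis is $\tfrac{1}{d(d+1)}J + \tfrac{1}{d+1}I_{d^2}$, with $J$ the all-ones $d^2\times d^2$ matrix; since $J$ has eigenvalues $d^2$ and $0$, this Gram matrix has eigenvalues $1$ and $\tfrac{1}{d+1}$, hence is positive definite. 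So $\langle\cdot,\cdot\rangle$ is a genuine inner product making $\mathcal{V}$ a finite-dimensional Hilbert space in the sense used in the definition preceding Lemma \ref{lem: Hilbert-Schmidt}, and the condition ``$\langle\cdot,\cdot\rangle$ is not completely positive'' is well posed.

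Next I would observe that this abstractly defined inner product agrees with the one introduced via the unital completely positive injection $\pi\colon p_i\mapsto P_i$ in the discussion preceding the theorem, whenever a SIC-POVM $\{P_1,\dots,P_{d^2}\}\subseteq M_d$ exists: both are determined on the spanning set by $\langle p_i,p_j\rangle=\tfrac1d\Tr(\pi(p_i)^*\pi(p_j))=\tfrac1d\Tr(P_iP_j)$, which equals $\tfrac{1}{d(d+1)}$ for $i\neq j$ and $\tfrac1d$ for $i=j$ by the SIC-POVM relations. Consequently the preceding theorem applies verbatim and yields: if a SIC-POVM exists in $M_d$, then $\langle\cdot,\cdot\rangle$ is completely positive. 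Taking the contrapositive gives exactly the statement of the corollary.

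There is essentially no obstacle in this argument; it is the ``obvious logical implication'' flagged before the statement. The only step that is not pure formalism is the identification of the two a priori distinct descriptions of $\langle\cdot,\cdot\rangle$, and that reduces to the one-line computation $\tfrac1d\Tr(P_iP_j)=\tfrac{1}{d(d+1)}$ for $i\neq j$ and $\tfrac1d\Tr(P_i^2)=\tfrac1d\Tr(P_i)=\tfrac1d$ for $i=j$, using $\Tr(P_iP_j)=\tfrac{1}{d+1}$ together with $P_i^2=P_i$ and $\Tr(P_i)=1$ for the rank-one projections $P_i\in M_d$.
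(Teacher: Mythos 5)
Your proof is correct and matches the paper's intent exactly: the corollary is simply the contrapositive of the preceding theorem, which the paper itself labels ``the obvious logical implication'' and does not prove separately. Your additional check that the Gram matrix $\tfrac{1}{d(d+1)}J+\tfrac{1}{d+1}I_{d^2}$ is positive definite (so the abstract inner product is well defined independently of any SIC-POVM) is a sensible and correct supplement, not a deviation.
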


We do not know if $\cc U_{\cc R}^d \cong M_d$ when a SIC-POVM exists. However, if the inner product on $\cc U_{\cc R}^d$ is self-dual then either $\cc U_{\cc R} \cong M_d$ or a SIC-POVM does not exist in $M_d$, by the Corollary above. 

\begin{rmk}
Let $d \in \mathbb{N}$. Two orthonormal bases $\{\phi_1, \dots, \phi_d\}$ and $\{\psi_1, \dots, \psi_d\}$ for $\mathbb{C}^d$ are called \textbf{mutually unbiased} if $|\langle \phi_i, \psi_j \rangle| = \frac{1}{\sqrt{d}}$ for all $i,j \in [d]$. When $d = p^r$ where $p,r \in \mathbb{N}$ and $p$ is prime, it is known that there exist $d+1$ mutually unbiased bases. Equivalently, there exist $d+1$ projection valued measures $\{P_{x,a}\}_{a=1}^d$, $x \in [d+1]$, satisfying
\begin{equation} \label{eqn: MUB}
\Tr(P_{x,a} P_{y,b}) = \begin{cases} \frac{1}{d} & x \neq y \\
1 & x=y, a=b \\
0 & x=y, a \neq b
\end{cases} \end{equation}
When this holds, the projections $\{P_{x,a}\}$ span the matrix algebra $M_d$. When $d$ is not of the form $d=p^r$, it is an open question whether or not there exist $d+1$ mutually unbiased bases. In particular, it is conjectured that there exist no more than three mutually unbiased bases in $\mathbb{C}^6$ (although the nonexistence of seven mutually unbiased bases in $\mathbb{C}^6$ remains an open question) \cite{Raynal2011MUBs}.

Using the methods outlined above for SIC-POVMs, one could consider $\mathcal{U}_{\mathcal{R}}^d$, the universal $d$-minimal operator system spanned by projections $\{p_{x,a} : x=1,\dots,d+1; a=1,\dots,d\}$ satisfying the relations
\[ \sum_{a=1}^d p_{x,a} = e \]
for each $x \in [d+1]$. Whenever there exist $d+1$ mutually unbiased bases exist in $\mathbb{C}^d$, we would obtain a unital completely positive map $\pi: \mathcal{U}_{\mathcal{R}}^d \to M_d$ with $\{\pi(p_{x,a})\}$ corresponding to a family of mutually unbiased bases. Using the inner product described in Equation \ref{eqn: MUB}, we would obtain new necessary conditions on this open existence problem. Since it is believed that $d+1$ mutually unbiased bases do not exist in many dimensions, this example is perhaps more compelling than the case for SIC-POVMs, since it leads to new necessary conditions which may fail to hold for certain values of $d$.
\end{rmk}

\section*{Acknowledgements}

Part of this work was done at the workshop ``QLA Meets QIT II'', held in Chicago, IL, November 2022. The second author was supported by an AMS-Simons Travel Grant which partially supported this work. Finally, we express our gratitude to the referee for a careful reading of this manuscript and many suggestions which helped improve the exposition.

\printbibliography
\end{document}